\newtheorem{theorem}{Theorem}[section]
\newtheorem{lemma}[theorem]{Lemma}
\theoremstyle{definition}
\newtheorem{definition}[theorem]{Definition}
\newtheorem{proposition}[theorem]{Proposition}
\theoremstyle{remark}
\newtheorem{remark}[theorem]{Remark}
\numberwithin{equation}{section}
\newcommand{\re}{\mathbb{R}}\newcommand{\N}{\mathbb{N}}
\newcommand{\zz}{\mathbb{Z}}\newcommand{\C}{\mathbb{C}}
\newcommand{\Z}{{\zz}^d}
\newcommand{\R}{{\re}^d}
\newcommand{\cs}{{\mathcal S}}
\newcommand{\cf}{{\mathcal F}}
\newcommand{\cfi}{{\cf}^{-1}}
\newcommand{\supp}{{\rm supp \, }}
\newcommand{\gf}{\mathcal{F}}
\newcommand{\be}{\begin{equation}}
\newcommand{\ee}{\end{equation}}
\newcommand{\beq}{\begin{eqnarray}}
\newcommand{\beqq}{\begin{eqnarray*}}
\newcommand{\eeq}{\end{eqnarray}}
\newcommand{\eeqq}{\end{eqnarray*}}
\begin{document}

\title{Isotropic and Dominating Mixed Besov Spaces -- a Comparison}

\author{Van Kien Nguyen}
\address{Friedrich-Schiller-University Jena, Ernst-Abbe-Platz 2, 07737 Jena, Germany and University of Transport and Communications, 
Dong Da, Hanoi, Vietnam}
\email{kien.nguyen@uni-jena.de}

\author{Winfried Sickel}
\address{Friedrich-Schiller-University Jena, Ernst-Abbe-Platz 2, 07737 Jena, Germany}
\email{winfried.sickel@uni-jena.de}

\subjclass{Primary 46E35; Secondary 42B35}
\date{January 17, 2016}

\dedicatory{This paper is dedicated to the memory of Bj\"orn Jawerth.}

\keywords{distribution spaces, isotropic Sobolev spaces, 
Sobolev spaces of dominating mixed smoothness,  isotropic Besov spaces, Besov spaces of dominating mixed smoothness, embeddings.}

\begin{abstract}
We compare Besov spaces with isotropic smoothness with Besov spaces of dominating mixed smoothness.
Necessary and sufficient conditions for continuous embeddings will be given. 
\end{abstract}

\maketitle

\section{Introduction}


For  $t \in \N_0$ the isotropic Sobolev space $W^t_2(\R)$, built on $L_2 (\R)$,
is the collection of all functions $f \in L_2 (\R)$ such that
\[
\|f|W^t_2(\R)\|:=\sum_{|\bar{\alpha}| \leq t}\|D^{\bar{\alpha}}f|L_2(\R)\|<\infty\, .
\] 
The  Sobolev space of dominating mixed smoothness $S^t_2 W(\R)$
is the tensor product of the univariate Sobolev spaces $W^t_2(\re)$, with other words
\beqq
S^t_2W(\R):= \Big\{f\in L_2(\R): \|f|S^t_pW(\R)\|:=\sum_{\|\bar{\alpha}\|_{\infty} \leq t}\|D^{\bar{\alpha}}f|L_2(\R)\|<\infty\Big\}.
\eeqq 
Here $\bar{\alpha}=(\alpha_1,...,\alpha_d)\in \N_0^d$, $|\bar{\alpha}|=\alpha_1+...+\alpha_d$ and 
$\|\bar{\alpha}\|_{\infty}=\max_{i=1,...,d}|\alpha_i|$. Observe that 
the mixed derivative $D^{(t, \ldots \, t)}f$ has the highest order in this norm which is the reason for the name of these spaces.
Spaces of dominating mixed smoothness have found applications in approximation theory since the early sixties,  more recent in high dimensional approximation and 
information based complexity, see, e.g.,  \cite{T93} and \cite{NoWo08,NoWo10,NoWo12}.  
Obviously we have the chain of continuous embeddings
\beqq
W^{td}_2(\R) \hookrightarrow S^t_2W(\R) \hookrightarrow W^t_2(\R).
\eeqq
Also easy to see is the optimality of these embeddings in various directions. We will discuss this below.
These two types of Sobolev spaces  $W^t_2(\R)$ and $S^t_2W(\R)$ represent particular cases of corresponding scales of Besov spaces, denoted by 
$B^t_{p,q} (\R)$ (isotropic smoothness) and  $S^t_{p,q} B(\R)$ (dominating mixed smoothness).
Indeed, we have
\[
W^t_2(\R) = B^t_{2,2}(\R) \qquad \mbox{and}\qquad S^t_2W(\R) = S^t_{2,2}B(\R) 
\]
in the sense of equivalent norms.
In this paper we address the question under  which conditions on $t,p,q$ the embedding 
\be\label{ws-01}
 B^{td}_{p,q}(\R)\hookrightarrow S^t_{p,q} B(\R) \hookrightarrow B^t_{p,q}(\R)
 \ee
holds true. In addition we shall discuss the optimality of these embeddings in various directions. 
Let us mention here that Schmeisser \cite{Sc2} and Hansen \cite{Hansen}
have considered those embeddings as well. Below we will make a more detailed  comparison.
\\
Nowadays isotropic Besov spaces represent  a well accepted regularity notion in various fields of mathematics.
Besov spaces of dominating mixed smoothness are of increasing importance in approximation theory and information based complexity,  
we refer to \cite{T93}.  
As a special case, the scale $S^t_{p,p} B(\R)$  contain the tensor products of the univariate Besov spaces $B^t_{p,p}(\re)$, see \cite{SU09,SU10}.
It is the main aim of this paper to give a detailed comparison of these different extensions of univariate Besov spaces
into the multi-dimensional situation.
\\
Since a few years there is some strong motivation to study those spaces also for parameters $p,q<1$.
Let $\Phi := (\psi_j)_j$ denote a wavelet basis satisfying some additional 
smoothness, integrability,  and moment conditions. 
We consider best $m$-term approximation with respect to $\Phi$, i.e., we investigate the quantity
\[
\sigma_m (f,\Phi)_X := \inf \bigg\{
\|\, f - \sum_{j \in \Lambda} c_j \, \psi_j\,|\,X  \|:
\  |\Lambda|\le m\, , \ c_j \in \C\, , \, j \in \Lambda 
\bigg\}\, , \quad m \in \N_0\, .
\]
Associated widths are defined as follows. Let $X$ and $Y$ be quasi-Banach spaces such that 
$Y \hookrightarrow X$. Then we define
\[
\sigma_m (Y,X,\Phi):= \sup \, \Big\{\sigma_m (f,\Phi)_X : \quad 
\| \, f\, |Y\|\le 1\Big\}\, , \quad m \in \N_0\, . 
\]
Usually one concentrates on $X=L_p (\R)$. 
Here we would like to recall the breakthrough result of DeVore, Jawerth and Popov \cite{DJP}.
Let $0 < \tau < p$. A function $f$ belongs to the Besov space $B^{d(\frac{1}{\tau} - \frac{1}{p})}_{\tau,\tau}(\R)$
if, and only if  it satisfies
\[
\Big(\sum_{m=1}^\infty m^{-1}\,  \Big[m^{\frac{1}{\tau} - \frac{1}{p}} \, 
\sigma_m (f,\Phi)_{L_p (\R)} \Big]^\tau\Big)^{1/\tau}<\infty\, .
\] 
Hence, Besov spaces (in particular  with $\tau<1$) describe approximation spaces with respect 
to nonlinear approximation where the original question 
(behaviour of $\sigma_m (f,\Phi)_{L_p (\R)}$) has been asked for $L_p$-spaces with $p \ge 1$.
For further results in this directions supporting the importance of spaces with $p,q<1$, we refer 
to Jawerth and Milman \cite{JM}, \cite{JM2}.
Similar descriptions of  $S^{d(\frac{1}{\tau} - \frac{1}{p})}_{\tau,\tau}B(\R)$ exist as well, see \cite{HS1}, \cite{HS2}.
For us this motivates the investigation of our problem also for $p,q<1$.

\noindent
The paper is organized as follows. In Section \ref{def} we recall the definition of the spaces $B^t_{p,q}(\R)$ and $S^t_{p,q}B(\R)$.  
Our main results are stated in Section \ref{main}. Proofs are concentrated in Section \ref{proofs}.


\subsection*{Notation}


As usual, $\N$ denotes the natural numbers, $\N_0 := \N \cup \{0\}$,
$\zz$ the integers and
$\re$ the real numbers, $\C$ refers to the complex numbers. For a real number $a$ we put $a_+ := \max(a,0)$.
If $\bar{k} \in \N_0^d$, i.e., if $\bar{k}=(k_1, \ldots \, , k_d)$, $k_\ell \in \N_0$, $\ell=1, \ldots \, , d$, then we put
\[
|\bar{k}| := k_1 + \ldots \, + k_d\, .
\]
For $x \in \R$ we use $\|x\|_\infty:= \max_{j=1, \ldots \, d} \, |x_j|  $. 
The symbols  $c,c_1, c_2, \, \ldots  \, ,C, C_1,C_2, \,  \ldots $ denote   positive
constants which are independent of the main parameters involved but
whose values may differ from line to line. The symbol 
$A \asymp B$ means that there exist positive constants $C_1$ and $C_2$ such that $C_1\, A\leq B\leq C_2\, A.$\\
Let $X$ and $Y$ be  two quasi-Banach spaces. Then the symbol $X \hookrightarrow Y$ indicates that the embedding is continuous. 
By $C_0^\infty (\R)$ the set of compactly supported infinitely differentiable functions $f:\R \to \C$ is denoted.
Let $\cs (\R)$ be the Schwartz space of all complex-valued rapidly decreasing infinitely differentiable  functions on $\R$. 
The topological dual, the class of tempered distributions, is denoted by $\cs'(\R)$ (equipped with the weak topology).
The Fourier transform on $\cs(\R)$ is given by 
\[
\cf \varphi (\xi) = (2\pi)^{-d/2} \int_{\R} \, e^{ix \xi}\, \varphi (x)\, dx \, , \qquad \xi \in \R\, .
\]
The inverse transformation is denoted by $\cfi $.
We use both notations also for the transformations defined on $\cs'(\R)$.\\
Let $0 < p,q \leq \infty$. For an arbitrary countable index set $I$ we put
\[
\| (f_k)_{k\in I} |\ell_q(L_p)\| := \bigg(\sum_{k\in I} \Big(\int\limits_{\R} 
|f_k(x) |^p \,  dx\Big)^{q/p} \bigg)^{1/q} 
\]
(usual modification if $\max (p,q) = \infty$).


\section{Besov spaces of isotropic and dominating mixed smoothness}\label{def}


\subsection{Isotropic Besov spaces}

Usually these spaces are defined by using the modulus of smoothness.
Here we prefer to use the Fourier analytic descriptions
since we are dealing also with negative smoothness.
\\ 
Let $\phi_0\in C_0^\infty(\R)$ be a non-negative function such that  $\phi(x)=1$ if $|x|\leq 1$ and $\phi (x)=0$ if $|x|\geq \frac{3}{2}$. For $j\in \N$ we define 
\beqq
\phi_j(x):=\phi_0(2^{-j}x)-\phi_0(2^{-j+1}x), \qquad x \in \R\, .
\eeqq
This yields 
\[
\sum_{j=0}^\infty \phi_j(x) = 1 \qquad \mbox{for all}\quad x \in \R\, .
\]
We shall call $(\phi_j)_{j=0}^\infty$ a smooth dyadic decomposition of unity.

\begin{definition}\label{ibf} \rm Let $t \in \re$ and $0< p, q\leq \infty$.
Then $B^{t}_{p,q}(\R)$ is the collection of all
$f\in S'(\R)$ such that
 \beqq
            \|f|B^{t}_{p,q}(\R)\|^{\phi}  =
            \bigg(\sum\limits_{j=0}^{\infty}2^{jtq}
            \big\|\gf^{-1}[\phi_{j}\gf f](\cdot)|L_p(\R)
            \big\|^q\bigg)^{1/q}
            \label{ibnorm}
\eeqq
is finite (modification if $q=\infty$).
\end{definition}

\begin{remark}
\rm
Besov spaces are discussed in various monographs, let us refer to 
\cite{BL}, \cite{Ni}, \cite{Pe} and \cite{Tr83}.
They are quasi-Banach spaces (Banach spaces if $\min(p,q)\ge 1$) and 
they do not depend on the chosen generator $\phi_0$ of the smooth dyadic decomposition
(in the sense of equivalent quasi-norms).
We call them isotropic because they are invariant under rotations.
Characterizations in terms of differences can be found at various places, see, e.g., \cite[2.5]{Tr83} or \cite[3.5]{Tr92}.
\end{remark}

For us it will be convenient to switch to an equivalent quasi-norm. Let $\psi_0\in C_0^\infty (\R)$ such that 
 \beqq
 \psi_0(x)=1 \text{\ if\ } \sup_{i=1,...,d}|x_i|\leq 1
\quad \text{and}\quad  \psi_0(x)=0 \ \text{ if }\ \sup_{i=1,...,d}|x_i|\geq \frac{3}{2}\, .
\eeqq
 For $j\in \N$, we define 
 \be\label{unity1}
 \psi_j(x):=\psi_0(2^{-j}x)-\psi_0(2^{-j+1}x)\, , \qquad x \in \R\, .
 \ee
 Then we have
 \beqq
 \supp\psi_j \subset \{x:\ \sup_{i=1,...,d}|x_i|\leq 3.2^{j-1}\}\, \setminus \, \{x: \ \sup_{i=1,...,d}|x_i|\leq 2^{j-1}\}, \, \quad j \in \N\, .
 \eeqq
 
 As an easy consequence of \cite[Proposition 2.3.2]{Tr83} one obtains the following.
 
 \begin{proposition}
Let $t \in \re$ and $0<p, q\leq \infty$.
Then $B^{t}_{p,q}(\R)$ is the collection of all $f\in S'(\R)$ such that
      \beqq
            \|f|B^{t}_{p,q}(\R)\|^{\psi}  =
            \bigg(\sum\limits_{j=0}^{\infty}2^{jtq}
            \big\|\gf^{-1}[\psi_{j}\gf f](\cdot)|L_p(\R)
            \big\|^q\bigg)^{1/q}
            \label{ibnorm1}
        \eeqq
         is finite (modification if $q=\infty$). The quasi-norms $\|f|B^{t}_{p,q}(\R)\|^{\psi}$ and $\|f|B^{t}_{p,q}(\R)\|^{\phi}$ are equivalent.
\end{proposition}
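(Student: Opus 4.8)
The plan is to verify that the cube-adapted system $(\psi_j)_{j=0}^\infty$ is an admissible resolution of unity in the sense of \cite[Proposition 2.3.2]{Tr83}, from which the asserted equivalence is immediate; below I also indicate the direct Fourier-multiplier argument behind that proposition. To that end I would first record the elementary properties of $(\psi_j)$. Each $\psi_j$ lies in $C_0^\infty(\R)$; the partial sums telescope, $\sum_{j=0}^J\psi_j(x)=\psi_0(2^{-J}x)$, and since $\psi_0\equiv 1$ near the origin this gives $\sum_{j=0}^\infty\psi_j\equiv 1$. From the support description already stated, $\supp\psi_j\subset\{x:\ 2^{j-1}<\|x\|_\infty\le 3\cdot 2^{j-1}\}$ for $j\ge 1$, and using $\|x\|_\infty\le |x|\le\sqrt d\,\|x\|_\infty$ this becomes a genuine Euclidean dyadic annulus,
\[
\supp\psi_j\subset\{x:\ 2^{j-1}<|x|\le 3\sqrt d\,2^{j-1}\}\, ,\qquad j\ge 1\, ,
\]
with radii $\asymp 2^j$ (constants depending only on $d$), while $\supp\psi_0$ lies in the ball of radius $\tfrac32\sqrt d$. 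From the scaling identity $\psi_j(x)=\psi_1(2^{-(j-1)}x)$, valid for $j\ge 1$, one also reads off the uniform bounds $|D^\gamma\psi_j(x)|\le c_\gamma 2^{-j|\gamma|}$ and $\|\gf^{-1}\psi_j|L_1(\R)\|=\|\gf^{-1}\psi_1|L_1(\R)\|$. These are precisely the hypotheses of \cite[Proposition 2.3.2]{Tr83}.

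Next I would carry out the transfer of Littlewood--Paley blocks directly. Since the supports of $\psi_j$ and of $\phi_k$ are annuli with radii $\asymp 2^j$, there is a fixed $N=N(d)\in\N$ (for instance $N=\lceil\log_2(3\sqrt d)\rceil$) with $\supp\psi_j\cap\supp\phi_k=\emptyset$ whenever $|j-k|>N$. Hence $\psi_j=\psi_j\sum_{|k-j|\le N}\phi_k$, and since $\gf^{-1}[\phi_k\gf f]$ has compactly supported spectrum,
\[
\gf^{-1}[\psi_j\gf f]=\sum_{|k-j|\le N}\gf^{-1}\big[\psi_j\,\gf\big(\gf^{-1}[\phi_k\gf f]\big)\big]\, .
\]
For a distribution whose spectrum lies in a ball of radius $\asymp 2^j$, convolution against $\gf^{-1}\psi_j$ is bounded on $L_p(\R)$ for every $0<p\le\infty$ (the band-limited multiplier / Nikol'skij inequality, \cite[1.5.2]{Tr83}), with operator norm independent of $j$ by the rescaling $\psi_j(\cdot)=\psi_1(2^{-(j-1)}\cdot)$. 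Thus $\|\gf^{-1}[\psi_j\gf f]\,|L_p(\R)\|\ls\sum_{|k-j|\le N}\|\gf^{-1}[\phi_k\gf f]\,|L_p(\R)\|$ uniformly in $j$ and $f$; multiplying by $2^{jt}$, taking the $\ell_q$-quasi-norm in $j$, and shifting the finitely many indices $k$ (using the $c$-triangle inequality of $\ell_q$) yields $\|f|B^{t}_{p,q}(\R)\|^{\psi}\ls\|f|B^{t}_{p,q}(\R)\|^{\phi}$. The reverse inequality follows by symmetry from $\phi_k=\phi_k\sum_{|j-k|\le N}\psi_j$, and the two spaces then coincide as subsets of $S'(\R)$ with equivalent quasi-norms.

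The only step that is not pure bookkeeping is the uniform-in-$j$ multiplier estimate in the quasi-Banach range $0<p<1$: there the classical Michlin--Hörmander multiplier theorem is unavailable and one must use its band-limited version (Plancherel--Polya--Nikol'skij), which is exactly what \cite[Proposition 2.3.2]{Tr83} encapsulates. Everything else reduces to keeping track of the dimension-dependent overlap constant $N(d)$.
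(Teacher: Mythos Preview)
Your proposal is correct and follows exactly the approach the paper takes: the paper states this result as ``an easy consequence of \cite[Proposition 2.3.2]{Tr83}'' without further detail, and you have carried out precisely that verification, checking the admissibility hypotheses for $(\psi_j)_j$ and recording the finite-overlap multiplier argument that underlies Triebel's proposition. Your treatment is more explicit than the paper's one-line citation but is the same proof.
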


In what  follows we will work with the $\psi-$norm. Therefore we shall write $\|f|B^{t}_{p,q}(\R)\| $ instead of $\|f|B^{t}_{p,q}(\R)\|^{\psi}$.


\subsection{Besov spaces of dominating mixed smoothness}


Let $\varphi_0 \in C_0^{\infty}({\re})$ satisfy  $\varphi_0(\xi) = 1$ on $[-1,1]$ and $\supp\varphi \subset [-\frac{3}{2},\frac{3}{2}]$. For $j\in \N$ we define
\be\label{unity2}
\varphi_j(x) = \varphi_0(2^{-j}x)-\varphi_0(2^{-j+1}x)\, , \qquad x \in \re\, .
\ee
Hence, $(\varphi_j)_{j=0}^\infty$ forms a smooth dyadic decomposition of unity on $\re$.
Now we switch to tensor products.    For $\bar{k} = (k_1,...,k_d) \in {\N}_0^d$ the function
    $\varphi_{\bar{k}}(x) \in C_0^{\infty}(\R)$ is defined by
$$
\varphi_{\bar{k}}(x) := \varphi_{k_1}(x_1)\cdot \, \ldots \, \cdot  \varphi_{k_d}(x_d)\, ,\qquad x\in \R.
$$

\begin{definition}\label{sprd} 
Let $t \in \re$ and $0<p,q\leq \infty$.
Then $S^{t}_{p,q}B(\R)$ is the collection of all $f\in S'(\R)$ such that
\beqq
            \|f|S^{t}_{p,q}B(\R)\|^{\varphi}  =
            \bigg(\sum\limits_{\bar{k}\in{\N}_0^d}2^{|\bar{k}|tq}
            \big\|\gf^{-1}[\varphi_{\bar{k}}\gf f](\cdot)|L_p(\R)
            \big\|^q\bigg)^{1/q}
            \label{bnorm}
\eeqq
is finite (modification if $q=\infty$).
    \end{definition}

\begin{remark}\label{blabla}
\rm
{\rm (i)} Besov spaces of dominating mixed smoothness are discussed in the  monographs Amanov \cite{Am} and 
Schmeisser, Triebel \cite{ST}, see also the booklet Vybiral \cite{Vybiral}.
They are quasi-Banach spaces (Banach spaces if $\min(p,q)\ge 1$) and 
they do not depend on the chosen generator $\phi_0$ of the smooth dyadic decomposition
(in the sense of equivalent quasi-norms). For characterizations in terms of differences we refer to \cite[2.3.4]{ST} and 
\cite{U1}.
\\
{\rm (ii)} For us of certain importance will be the following observation. Besov spaces of dominating mixed smoothness have a cross-quasi-norm, i.e., 
if $f_j \in B^t_{p,q}(\re)\, ,\  j=1, \ldots \, , d \, $ then 
\[
 f(x) = \prod_{j=1}^d f_j (x_j)  \in S^t_{p,q}B(\R) \quad \text{and} \quad
 \| \, f \, | S^t_{p,q}B(\R)\| = \prod_{j=1}^d \|\, f_j \, |B^t_{p,q} (\re)\| \, .
\]
{\rm (iii)} For $d=1$ we have $ S^{t}_{p,q} B(\re) = B^t_{p,q}(\re)\,  . $
\end{remark}


\section{The main results}\label{main}


We discuss these embeddings in \eqref{ws-01} separateley.


\subsection{The embedding of dominating mixed spaces into isotropic spaces}


\begin{theorem}\label{besov4}
 Let $0<p,q \le  \infty$ and $t\in \re$. Then we have 
\be\label{ws-04}
S^{t}_{p,q}B (\R) \hookrightarrow B^t_{p,q} (\R) 
\ee
if and only if one of the following conditions is satisfied 
\begin{itemize}
 \item $t>0$;
 \item $t=0$, $0 <p <\infty$ and $0 < q \le \min(p,2)$;
 \item $t=0$, $p=\infty$ and $q \le 1$.
\end{itemize}

\end{theorem}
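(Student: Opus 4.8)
The plan rests on comparing the two dyadic resolutions. Two structural facts drive everything. First, $\psi_j\varphi_{\bar k}\not\equiv 0$ forces $\big|\,\|\bar k\|_\infty-j\,\big|\le 1$, and every frequency lies in at most $2^d$ of the supports $\supp\varphi_{\bar k}$. Second, on those supports $\|\bar k\|_\infty\le j+1$, so the Fourier multiplier theorem for band-limited distributions (\cite[1.5.2]{Tr83}, \cite{ST}) gives $\|\gf^{-1}[\psi_j\varphi_{\bar k}\gf f]|L_p(\R)\|\ls\|\gf^{-1}[\varphi_{\bar k}\gf f]|L_p(\R)\|$ with a constant independent of $j$ and $\bar k$. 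Abbreviating $g_{\bar k}:=\gf^{-1}[\varphi_{\bar k}\gf f]$, one then has $\gf^{-1}[\psi_j\gf f]=\sum_{\bar k}\gf^{-1}[\psi_j\varphi_{\bar k}\gf f]$ with effective summation over the $\bar k$ with $\|\bar k\|_\infty\in\{j-1,j,j+1\}$, and conversely each $\bar k$ contributes to at most three levels $j$; for $p<1$ one uses the multiplier theorem for entire functions of exponential type, which does not change the picture.

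For sufficiency I would argue by cases. If $t>0$, the elementary bound $\sum_{\|\bar k\|_\infty\le m}2^{-|\bar k|s}\le(1-2^{-s})^{-d}$ ($s>0$), refined by splitting off the coordinate realizing the maximum to $\sum_{\|\bar k\|_\infty\in\{j-1,j,j+1\}}2^{-|\bar k|s}\ls 2^{-js}$, absorbs the weight $2^{-|\bar k|tq'}$ after Hölder's inequality in $\bar k$ (or $q$-subadditivity when $q\le 1$); summing the resulting inequality $2^{jtq}\|\gf^{-1}[\psi_j\gf f]|L_p\|^q\ls\sum_{\|\bar k\|_\infty\approx j}2^{|\bar k|tq}\|g_{\bar k}|L_p\|^q$ over $j$ yields the embedding for all $p,q$. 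If $t=0$ and $q\le 1$ (in particular the case $p=\infty$), the quasi-triangle inequality alone gives $\|\gf^{-1}[\psi_j\gf f]|L_p\|^q\ls\sum_{\|\bar k\|_\infty\approx j}\|g_{\bar k}|L_p\|^q$, and summing over $j$ costs only a factor $3$. The substantive case is $t=0$, $1<q\le\min(p,2)$, which forces $1<q\le p<\infty$; here I would prove, uniformly in the level $j$, the Littlewood--Paley-type bound
\[
\Big\|\sum_{\|\bar k\|_\infty\approx j}h_{\bar k}\,\Big|\,L_p(\R)\Big\|\ \ls\ \Big(\sum_{\|\bar k\|_\infty\approx j}\|h_{\bar k}|L_p(\R)\|^{\min(p,2)}\Big)^{1/\min(p,2)},\qquad \supp\gf h_{\bar k}\subset\supp\varphi_{\bar k},
\]
by duality against $L_{p'}$, replacing $g$ in the $\bar k$-th pairing by $\gf^{-1}[\widetilde\varphi_{\bar k}\gf g]$ with $\widetilde\varphi_{\bar k}=\sum_{\|\bar\epsilon\|_\infty\le1}\varphi_{\bar k+\bar\epsilon}$, Hölder in $\bar k$, Minkowski's inequality (to pass from $\ell_{q'}(L_{p'})$ to $L_{p'}(\ell_{q'})\hookrightarrow L_{p'}(\ell_2)$, legitimate since $q'=\max(p',2)\ge p'$ and $q'\ge 2$), and finally the square-function estimate $\|(\sum_{\bar m}|\gf^{-1}[\varphi_{\bar m}\gf g]|^2)^{1/2}|L_{p'}\|\ls\|g|L_{p'}\|$ for the tensorized dyadic system (\cite{ST}). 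Inserting this into $\|f|B^0_{p,q}(\R)\|^q=\sum_j\|\gf^{-1}[\psi_j\gf f]|L_p\|^q$ and summing over $j$ finishes the sufficiency.

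For necessity I would use test functions whose Fourier transforms sit in single dyadic boxes (we assume $d\ge 2$; for $d=1$ the two scales coincide). If $t<0$, take $f_m=\gf^{-1}[\chi(2^{-m}\,\cdot\,)]$ for a fixed bump $\chi$ supported near $(1,\dots,1)$; then $\|f_m|S^t_{p,q}B(\R)\|\asymp 2^{dmt}\|f_m|L_p\|$ while $\|f_m|B^t_{p,q}(\R)\|\asymp 2^{mt}\|f_m|L_p\|$, so the ratio is $\asymp 2^{mt(1-d)}\to\infty$. For $t=0$ the point is that the isotropic block at level $N$ collects contributions from $\asymp N$ boxes $\supp\varphi_{\bar k}$ with $\|\bar k\|_\infty=N$; choosing $E_N=\{(N,j,0,\dots,0):\ N/2\le j\le N,\ j\ \text{odd}\}$ (size $\asymp N$), whose boxes have bounded overlap and lie in the support of a single $\psi_N$, any $f$ with $\gf f$ supported in $\bigcup_{\bar k\in E_N}\supp\varphi_{\bar k}$ satisfies $\|f|B^0_{p,q}(\R)\|\asymp\|f|L_p(\R)\|$ and $\|f|S^0_{p,q}B(\R)\|\asymp\big(\sum_{\bar k\in E_N}\|\gf^{-1}[\varphi_{\bar k}\gf f]|L_p\|^q\big)^{1/q}$. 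Taking $f=\sum_{\bar k\in E_N}g_{\bar k}$ with each $\gf g_{\bar k}$ a modulated copy of a fixed bump and $\|g_{\bar k}|L_p\|=1$: if the $g_{\bar k}$ are translated to $1$-separated lattice points one gets $\|f|L_p\|\asymp N^{1/p}$ (and $\asymp N$ for $p=\infty$, where all peaks add up), whence $q\le p$ (resp. $q\le 1$); if instead one takes $f=b\cdot\sum_{\bar k\in E_N}e^{i\langle\xi_{\bar k},\,\cdot\,\rangle}$ with $\xi_{\bar k}$ the box centres, then $|f|$ is $|b|$ times a one-dimensional lacunary trigonometric sum, whose $L_p$-average is $\asymp N^{1/2}$ for every $0<p<\infty$, whence $q\le 2$. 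Together with the case $t<0$ this shows the listed conditions are also necessary.

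The main obstacle is the sufficiency in the range $t=0$, $1<q\le\min(p,2)$: it is the only step that genuinely needs harmonic analysis (the tensor Littlewood--Paley inequality together with the duality/Minkowski bookkeeping), and it is precisely what produces the sharp threshold $\min(p,2)$. A secondary nuisance is making all the ``$\asymp$'' relations for the test functions and the uniformity of the multiplier constants in Step 1 rigorous when $p<1$, where classical Mikhlin-type arguments must be replaced by the multiplier theorem for entire functions of exponential type.
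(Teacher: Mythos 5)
Your sufficiency argument is essentially the paper's: the same support bookkeeping ($\big|\,\|\bar k\|_\infty-j\,\big|\le 1$), the same uniform band-limited multiplier estimate, geometric summation in $|\bar k|-j$ for $t>0$, and for $t=0$, $1<q\le\min(p,2)$ the tensorized Littlewood--Paley square function (the paper applies the rectangular Littlewood--Paley theorem directly to $\gf^{-1}\psi_j\gf f$ and then uses $\|\cdot\,|L_p(\ell_2)\|\le\|\cdot\,|\ell_{\min(p,2)}(L_p)\|$, while you re-derive the same block inequality by duality plus Minkowski; your arrow ``$L_{p'}(\ell_{q'})\hookrightarrow L_{p'}(\ell_2)$'' is written backwards, but the inequality you actually invoke, $\|\cdot\|_{\ell_{q'}}\le\|\cdot\|_{\ell_2}$ for $q'\ge 2$, is the right one). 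Where you genuinely differ is the necessity part, and your route is simpler: for $t<0$ a single ``diagonal'' test function with Fourier support near $(2^m,\dots,2^m)$ gives $\|f_m|B^t_{p,q}(\R)\|/\|f_m|S^t_{p,q}B(\R)\|\asymp 2^{mt(1-d)}\to\infty$ for all $p,q$, whereas the paper reduces $t<0$ to $t=0$ by complex interpolation (including the special subspaces needed for $p=\infty$); and your spatially separated bumps yield $q\le p$ directly for every $0<p<\infty$, whereas the paper computes the isotropic norm of its Example 3 via Jawerth--Franke-type embeddings (valid only for $1<p<\infty$) and again resorts to complex interpolation to reach $p\le 1$. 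Your $q\le 2$ argument via lacunary sums is the paper's Example 1 (Littlewood--Paley for $1<p<\infty$, Lyapunov for $p\le 1$) in slightly different clothing.

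Two local slips need repair. First, in the sufficiency case ``$t=0$, $q\le 1$'' the quasi-triangle inequality gives $\|\sum_{\bar k} h_{\bar k}|L_p\|^q\ls\sum_{\bar k}\|h_{\bar k}|L_p\|^q$ only when $q\le\min(p,1)$: for $p<1$ the $L_p$-quasi-norm is merely $p$-subadditive, so for $p<q\le 1$ your displayed bound is false -- and it must be, since your own necessity argument shows $q\le p$ is required there. State this case as $q\le\min(p,1)$ (exactly what the theorem needs and what the paper's Substep 3.1 does); the elementary argument then goes through. Second, for $p=\infty$ the parenthetical ``translated to $1$-separated lattice points \dots\ $\asymp N$ for $p=\infty$, where all peaks add up'' is incorrect: with separated translates the peaks sit at different points and $\|f\|_\infty\asymp 1$. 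To get $\|f|B^0_{\infty,q}(\R)\|\asymp N$ you must not translate -- take co-located bumps, or a lacunary exponential sum with nonnegative coefficients as in the paper's Example 2, evaluated at a common point; then $N\ls N^{1/q}$ forces $q\le 1$. Relatedly, for $0<p<\infty$ the two-sided bound $\|f|L_p(\R)\|\asymp N^{1/p}$ for separated translates needs a fixed large separation $R$ (not $1$) together with the standard tail estimate to be uniform in $N$; with these adjustments your necessity scheme is complete.
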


\begin{remark}\label{incomparable1}
 \rm
Sufficient conditions for embeddings as in \eqref{ws-04} have been considered by Schmeisser \cite{Sc2} and Hansen \cite{Hansen}. 
Both used different methods than we do.
Schmeisser used characterizations by differences and concentrated on the Banach space case.  
Hansen showed $S^{t+ \varepsilon}_{p,q_0}B (\R) \hookrightarrow B^t_{p,q} (\R)$
with $\varepsilon >0$ and $q_0,q$ arbitrary by applying wavelet characterizations.
\end{remark}

We summarize what is known about the relation of 
$B^{t}_{p,q}(\R)$ and $S^t_{p,q} B(\R) $ in the remaining cases.
For subsets $X,Y$ of $\cs'(\R)$ we shall call not comparable if
\[
 X \setminus Y \neq \emptyset \qquad \mbox{and}\qquad Y\setminus X \neq \emptyset\, .
\]

\begin{proposition}\label{besov14}
{\rm (i)}  Let $1 \le p \le  \infty$, $0<q\leq \infty$ and $t<0$. Then we have 
$$
 B^{t}_{p,q}(\R)\hookrightarrow S^t_{p,q} B(\R) \, .
$$
{\rm (ii)} Let $t=0$, $p \neq 2$,  $1 <p<\infty$ and $\min (2,p)< q < \max(2,p)$. 
Then $B^{0}_{p,q}(\R)$ and $S^0_{p,q} B(\R) $ 
are not comparable.
\\
{\rm (iii)} Let $t=0$, $p=1$ and   $1 <q<\infty$.  
Then $B^{0}_{1,q}(\R)$ and $S^0_{1,q} B(\R) $ 
are not comparable.
\\
{\rm (iv)} Let $t=0$, $p = \infty$ and   $1 <q<\infty$.  
Then $B^{0}_{\infty,q}(\R)$ and $S^0_{\infty,q} B(\R) $ 
are not comparable.
\\
{\rm (v)} Let $t<0$,   $0 <p<1$ and $0 <  q \le \infty$. 
Then $B^{t}_{p,q}(\R)$ and $S^t_{p,q} B(\R) $ 
are not comparable.
\end{proposition}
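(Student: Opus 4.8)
The plan is to prove each part by exhibiting explicit families of test functions, built from dyadic blocks, that lie in one space but not the other. The basic building blocks are lacunary-type functions whose Fourier transforms are supported in the annuli (for the isotropic scale) or in the dyadic boxes (for the mixed scale), so that the two quasi-norms can be computed or estimated term by term. The key structural fact to keep in mind is the geometry: the isotropic block $\gf^{-1}[\psi_j\gf f]$ collects all frequencies with $\|\xi\|_\infty\asymp 2^j$, whereas a single mixed block $\gf^{-1}[\varphi_{\bar k}\gf f]$ sees only frequencies with $|\xi_i|\asymp 2^{k_i}$; the set $\{\|\bar k\|_\infty=j\}$ has roughly $j^{d-1}$ elements, and this polynomial-in-$j$ discrepancy, weighted by the common factor $2^{jt}$, is exactly what drives the sign of $t$ in the statements.

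For part (i) ($t<0$, $p\ge 1$): the natural approach is duality combined with the known embedding $S^{-t}_{p',q'}B(\R)\hookrightarrow B^{-t}_{p',q'}(\R)$ from Theorem \ref{besov4} (valid since $-t>0$), using that $B^t_{p,q}(\R)$ and $S^t_{p,q}B(\R)$ are the duals (in the appropriate sense, for $p,q<\infty$; the cases with $\infty$ handled separately or by a direct monotonicity argument) of $B^{-t}_{p',q'}(\R)$ and $S^{-t}_{p',q'}B(\R)$ respectively. I would first record the duality identities for both scales (these are standard; cite \cite{Tr83} and \cite{ST}), then observe that dualizing a continuous embedding reverses it, which gives $B^t_{p,q}(\R)\hookrightarrow S^t_{p,q}B(\R)$ directly. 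For $q=\infty$ or $p=\infty$ one either invokes the real-interpolation/monotonicity in $q$ together with the case $q<\infty$, or gives the tiny direct estimate $\|f|S^t_{p,\infty}B\|\lesssim\|f|B^t_{p,\infty}\|$ using that each box $\varphi_{\bar k}$ with $\|\bar k\|_\infty=j$ is dominated by $\psi_j+\psi_{j-1}+\dots$ in a Nikolskii-type inequality and $2^{jt}$ is summable against $j^{d-1}$ when $t<0$.

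For the incomparability statements (ii)–(v) the scheme is always: produce $f\in X\setminus Y$ and then $g\in Y\setminus X$. One inclusion in each case comes essentially for free from Theorem \ref{besov4} and its known converse-type failures, or from the elementary one-dimensional embedding $\ell_q\hookrightarrow\ell_{\max(p,q)}$ type comparisons of the block sequences; the work is in the explicit counterexample for the other direction. Concretely: for (ii) and (iii), at $t=0$ one uses on one side a function concentrated on a single "diagonal" box $\varphi_{(j,\dots,j)}$ for each $j$ (so its mixed norm is an $\ell_q$ sum over $j$ while its isotropic norm sees the same frequencies but the block structure differs by the $j^{d-1}$ multiplicity) and on the other side a function spread over all boxes with $\|\bar k\|_\infty=j$; the choice of coefficients is tuned so that exactly one of the two $\ell_q$-type sums converges, which is possible precisely when $\min(2,p)<q<\max(2,p)$ because that is the gap where the isotropic $L_p$–Littlewood–Paley gives an $\ell_2$ (or $\ell_p$) substitute for $\ell_q$ in one direction only. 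For (iv) ($p=\infty$) the same idea works with the roles dictated by $\ell_q$ versus $\ell_1$ in the endpoint Littlewood–Paley substitute. For (v) ($t<0$, $p<1$), the point is that for $p<1$ duality is unavailable and in fact both embeddings fail: one builds $f\in B^t_{p,q}\setminus S^t_{p,q}B$ by placing a single highly oscillating bump in a box with $\|\bar k\|_\infty=j$ but one coordinate frequency much larger than the others, exploiting that the isotropic smoothness $t<0$ with $p<1$ allows a loss that the mixed scale does not absorb; and symmetrically a tensor-product function $\prod f_i(x_i)$ with each $f_i\in B^t_{p,q}(\re)$ lies in $S^t_{p,q}B(\R)$ by the cross-quasi-norm property (Remark \ref{blabla}(ii)) but can be arranged to fail the isotropic membership because for $p<1$ the isotropic $B^t_{p,q}$ is strictly smaller in the relevant direction.

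**Main obstacle.** The routine part is the bookkeeping of $L_p$-norms of the dyadic building blocks (standard Fourier-support and Nikolskii estimates). The genuinely delicate step is getting the counterexamples in (ii), (iii), (iv) sharp \emph{at the endpoints} of the interval $\min(2,p)<q<\max(2,p)$: one must verify that the constructed functions really sit in the open interval's spaces and fail exactly outside, which requires the precise Littlewood–Paley characterization of $L_p$ (the appearance of $\ell_2$ for $1<p<\infty$, $\ell_1$ for $p=1$, and the BMO-type substitute for $p=\infty$) rather than any soft embedding. For (v), the obstacle is instead organizing the two-sided failure cleanly for \emph{all} $q$, including $q=\infty$, where one of the two counterexamples must be a genuine distribution (not a function) and some care is needed to check it lies in $\cs'(\R)$ and has the claimed block behaviour.
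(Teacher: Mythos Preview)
Your overall strategy is sound, but you are working much harder than necessary in parts (ii)--(iv), and you misidentify the gap in part (i).

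\textbf{Parts (ii)--(iv).} The paper's proof is one sentence: these follow immediately from Theorems~\ref{besov4} and~\ref{besov1}. Those theorems already give \emph{necessary and sufficient} conditions for $S^0_{p,q}B\hookrightarrow B^0_{p,q}$ (namely $q\le\min(p,2)$, resp.\ $q\le 1$ for $p=\infty$) and for $B^0_{p,q}\hookrightarrow S^0_{p,q}B$ (namely $q\ge\max(p,2)$, resp.\ $q=\infty$ for $p=1$). In each of (ii), (iii), (iv) the hypothesis on $q$ violates \emph{both} conditions, so neither embedding holds. Your plan to build fresh test functions and invoke sharp Littlewood--Paley at the endpoints would simply be re-proving the necessity halves of those theorems; the ``main obstacle'' you flag has already been dealt with there.

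\textbf{Part (i).} Your duality idea matches the paper's, but the case split is off. Proposition~\ref{dual1} gives $[\mathring{B}^{-t}_{p',q'}]'=B^t_{p,q}$ only when $q'$ is the conjugate of some $q\ge 1$; for $0<q<1$ one has $q'=\infty$ and duality lands in $B^t_{p,1}$, not $B^t_{p,q}$. So the range needing the direct argument is $0<q<1$, not $p=\infty$ or $q=\infty$. The paper handles $0<q<1$ by the same Fourier-multiplier computation as in the sufficiency proof of Theorem~\ref{besov1}: since $p\ge 1$ one has $u=1$ in Lemma~\ref{mul3}, and the crucial observation is $\sup_j\sum_{\bar k\in\Delta_j}2^{(|\bar k|-j)tq}<\infty$ for $t<0$. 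Your ``tiny direct estimate'' is exactly this, and it works for all $q$ once stated correctly; you just need to recognise that it is needed below $q=1$ rather than at $q=\infty$.

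\textbf{Part (v).} Your two-construction plan is viable and close to the paper's. For $B^t_{p,q}\not\hookrightarrow S^t_{p,q}B$ the paper uses Example~1 (a bump with one large and one varying frequency coordinate), which is your ``highly oscillating bump''. For $S^t_{p,q}B\not\hookrightarrow B^t_{p,q}$ the paper simply quotes the necessity part of Theorem~\ref{besov4} (proved there by complex interpolation). Your tensor-product idea is an alternative and in fact more direct: with two factors supported at frequency scale $2^N$ and the rest at scale $1$, the mixed norm is $\asymp 2^{2Nt}$ while the isotropic norm is $\asymp 2^{Nt}$, and the ratio $2^{-Nt}\to\infty$ for $t<0$. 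But note: this tensor-product blow-up has nothing to do with $p<1$; it works for every $p$. The restriction $p<1$ enters only in the \emph{other} direction, where one must show $B^t_{p,q}\not\hookrightarrow S^t_{p,q}B$ despite part (i) saying the embedding \emph{does} hold for $p\ge 1$. So your stated reason (``because for $p<1$ the isotropic $B^t_{p,q}$ is strictly smaller'') is attached to the wrong half of the argument.
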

$$
\begin{tikzpicture}
\draw[->, ](0,0) -- (7,0);
\draw[->, ] (0,-2 ) -- (0,2);
\draw (3.5,-2)-- (3.5,0);
\draw[->, ] (7.6,1.6) -- (6.3,0.1);
\node [] at (7.6,1.9) {{\rm critical line}};

\node[below] at (-0.2,0) {$0$};
\node [left] at (7.2,-0.4) {$\frac{1}{p}$};
\node [left] at (0,2) {$t$};
\node [] at (3.5,1.2) {$ S^{t}_{p,q}B(\R)  \hookrightarrow B^t_{p,q}(\R)$};

\node [] at (1.7,-1.3) {{\small  $ B^{t}_{p,q}(\R) \hookrightarrow S^{t}_{p,q}B(\R)$}};

\node [] at (5,-0.9) {{\rm not comparable}};
\fill (3.5,0) circle (2pt);
\fill (0,0) circle (2pt);
\node [left] at (3.5,-0.3) {1};
\path[draw, line width=1.5pt](0,0) -- (6.5,0);
\node [right] at (3,-2.5) {\textsc{Figure 1.}};
\end{tikzpicture}
$$
The embedding \eqref{ws-04} is optimal in the following sense.

\begin{theorem}\label{besov5}
 Let $0<p_0,p, q_0,q \le  \infty$ and $t_0,t\in \re$. 
Let $p,q$ and $t$ be fixed. 
 Within all spaces   $S^{t_0}_{p_0,q_0}B (\R)$ satisfying
\beqq
S^{t_0}_{p_0,q_0}B (\R) \hookrightarrow B^{t}_{p,q} (\R) 
\eeqq
the class $S^{t}_{p,q}B (\R)$ is the largest one.
\end{theorem}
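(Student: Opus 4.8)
The plan is to prove optimality by a two-part argument: first, the positive direction, namely $S^{t}_{p,q}B(\R) \hookrightarrow B^{t}_{p,q}(\R)$ (so that $S^t_{p,q}B(\R)$ is indeed one of the competitors); and second, the maximality, i.e. whenever $S^{t_0}_{p_0,q_0}B(\R) \hookrightarrow B^{t}_{p,q}(\R)$ we must have $S^{t_0}_{p_0,q_0}B(\R) \hookrightarrow S^{t}_{p,q}B(\R)$. For the first part, I would simply invoke Theorem \ref{besov4}: the embedding $S^{t}_{p,q}B(\R) \hookrightarrow B^{t}_{p,q}(\R)$ holds at least when $t>0$, and in the boundary cases $t=0$ one has to restrict to the admissible range of $q$ as in the theorem; but note that the statement of Theorem \ref{besov5} is really about comparing competitors, so the content is in the second part. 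Actually the cleanest reading: \emph{among those spaces $S^{t_0}_{p_0,q_0}B(\R)$ that do embed into $B^t_{p,q}(\R)$}, the largest (with respect to set inclusion / continuous embedding) is $S^t_{p,q}B(\R)$ — which presupposes $S^t_{p,q}B(\R)$ is itself among them, hence the (implicit) hypothesis that $t,p,q$ lie in the range of Theorem \ref{besov4}.

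The heart of the matter is the maximality. I would use the standard device of testing the embedding $S^{t_0}_{p_0,q_0}B(\R) \hookrightarrow B^{t}_{p,q}(\R)$ on well-chosen families of functions whose Fourier support is concentrated in a single dyadic block $\{\,\bar k\,\}$, i.e. functions $f$ with $\gf f$ supported where $\varphi_{\bar k}\gf f = \gf f$. On such an $f$, the norm $\|f|S^{t_0}_{p_0,q_0}B(\R)\|$ collapses essentially to $2^{|\bar k|t_0}\|f|L_{p_0}(\R)\|$, and one compares this with what the right-hand isotropic norm sees. Choosing $f$ of the tensor form $f(x)=\prod_{j=1}^d g_\lambda(x_j)$ with $\gf g_\lambda$ supported in an annulus at frequency $\sim 2^{\lambda}$ — so $\bar k = (\lambda,\dots,\lambda)$ and $|\bar k|=d\lambda$ — lets me read off, by letting $\lambda\to\infty$ and using the cross-norm property (Remark \ref{blabla}(ii)) together with a Nikol'skij/Bernstein-type inequality to handle the possible mismatch $p_0\neq p$, the necessary relation between the parameters: roughly $t_0 \ge t$ when $p_0\le p$ (no loss), or $t_0 - d/p_0 \ge t - d/p$ type inequalities, plus a constraint on $q_0$ versus $q$ coming from the $\ell_{q}$ summation over the "diagonal" frequencies. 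One must also test on functions concentrated on a \emph{single} annulus in one variable only (the other components low-frequency), which forces the comparison at the level of $|\bar k|$ small but $\|\bar k\|_\infty$ large, and this is what separates mixed from isotropic behaviour. Assembling the constraints obtained from these several test families yields exactly the condition that $S^{t_0}_{p_0,q_0}B(\R) \hookrightarrow S^{t}_{p,q}B(\R)$ — for which I would quote (or reprove by the analogous dyadic-block comparison) the known characterization of embeddings between Besov spaces of dominating mixed smoothness, e.g. from \cite{Vybiral} or \cite{ST}.

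The main obstacle, I expect, is not any single estimate but the bookkeeping: one has to show that the \emph{family} of necessary conditions extracted from the isotropic target $B^t_{p,q}(\R)$ is at least as strong as the family characterizing $S^{t_0}_{p_0,q_0}B(\R)\hookrightarrow S^{t}_{p,q}B(\R)$. The subtle point is the interplay between the smoothness parameters and the microscopic ($\ell_q$ vs. $\ell_{q_0}$) summability conditions: a naive test with purely diagonal frequencies $\bar k=(\lambda,\dots,\lambda)$ only probes $B^{td}_{p,q}$-like behaviour and misses the constraints coming from anisotropic frequency blocks, so the choice of test functions must be rich enough (varying both $|\bar k|$ and the shape of $\bar k$) to recover every constraint. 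A secondary technical nuisance is the $p<1$ and $q<1$ range, where one cannot use duality and must argue entirely with these explicit Fourier-concentrated test functions and Fourier-multiplier (Nikol'skij-type) inequalities valid for quasi-norms; but since all the ingredients (the cross-norm property, Bernstein's inequality, the single-block reduction of the quasi-norm) are available in the quasi-Banach setting, this causes no essential difficulty beyond care with constants.
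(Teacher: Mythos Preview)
Your plan is correct and matches the paper's approach: test the assumed embedding on band-limited functions to extract necessary conditions on $(t_0,p_0,q_0)$, then invoke the known characterization of embeddings $S^{t_0}_{p_0,q_0}B(\R)\hookrightarrow S^{t}_{p,q}B(\R)$ (namely $p_0\le p$ together with $t_0-1/p_0\ge t-1/p$, and $q_0\le q$ in the limiting case). The paper is a bit more economical than your sketch: it uses dilated low-frequency bumps (Example~6/Lemma~\ref{p0p1}) to get $p_0\le p$, and then only the \emph{anisotropic} family $\bar k=(j,1,\dots,1)$ of Example~4 --- first with $a_j=\delta_{j,\ell}$ to obtain $t_0-1/p_0\ge t-1/p$, then with $a_j=2^{-j(t+1-1/p)}$ to obtain $q_0\le q$ --- and never touches the diagonal family $\bar k=(\lambda,\dots,\lambda)$, which (as you yourself observe) yields only a weaker constraint and can be dropped.
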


\begin{remark}
 \rm Comparing Theorems \ref{besov4} and \ref{besov5} it is natural to ask also for the optimality  
 of $S^{t}_{p,q}B (\R) \hookrightarrow B^t_{p,q} (\R) $ in the other direction, i.e., we fix 
 $S^{t}_{p,q}B (\R)$ and look for spaces $ B^{t_0}_{p_0,q_0} (\R)$ such that \eqref{ws-04} is true.
For this we consider a special situation.  
Theorem \ref{besov4} yields  $S^{2}_{1,2}B (\R) \hookrightarrow B^2_{1,2} (\R) $.
On the other hand, a Sobolev-type embedding and Theorem \ref{besov4} imply
\[
 S^{2}_{1,2}B (\R) \hookrightarrow S^{3/2}_{2,2}B (\R) \hookrightarrow B^{3/2}_{2,2} (\R) \, , 
\]
see the comments at the beginning of Subsection \ref{proof5}.
But for $d \ge 2$ these isotropic Besov spaces  $B^2_{1,2} (\R)$ and $ B^{3/2}_{2,2} (\R)$ are not comparable.
Hence, an optimality in such a wide sense is not true. 
\end{remark}


\subsection{The embedding of isotropic spaces  into dominating mixed spaces}


\begin{theorem}\label{besov1}
 Let $0<p,q \le  \infty$ and $t\in \re$. Then we have 
\be\label{ws-03}
B^{td}_{p,q} (\R) \hookrightarrow S^{t}_{p,q}B (\R)  
\ee
if and only if one of the following conditions is satisfied 
\begin{itemize}
 \item $t> \max(0, \frac 1p -1)$;
 \item $t=0$, $1< p   \le \infty$ and $\max(2,p) \le q\le \infty$; 
 \item $0 <p\le  1$, $t= \frac 1p -1$ and $q=\infty$.
\end{itemize}
\end{theorem}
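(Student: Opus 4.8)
The statement to prove is Theorem~\ref{besov1}: the embedding $B^{td}_{p,q}(\R)\hookrightarrow S^t_{p,q}B(\R)$ holds iff $t>\max(0,\frac1p-1)$, or $t=0$ with $1<p\le\infty$ and $\max(2,p)\le q\le\infty$, or $0<p\le1$, $t=\frac1p-1$ and $q=\infty$. The natural strategy is to split into the \emph{sufficiency} direction (establishing the embedding under each of the three listed conditions) and the \emph{necessity} direction (constructing counterexamples otherwise). For sufficiency the key is to compare the Littlewood--Paley pieces: $\psi_j\gf f$ of the isotropic decomposition is supported where $\|\xi\|_\infty\asymp 2^j$, while $\varphi_{\bar k}\gf f$ of the mixed decomposition is supported where $|\xi_i|\asymp 2^{k_i}$ for each $i$. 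Thus only the blocks $\bar k$ with $\|\bar k\|_\infty\asymp j$ (equivalently $\max_i k_i = j+O(1)$) interact with the $j$-th isotropic block, and there are roughly $j^{d-1}$ such $\bar k$ with $\|\bar k\|_\infty = j$. Writing $f$ as a sum over its isotropic blocks and estimating $\|\gf^{-1}[\varphi_{\bar k}\gf f]\,|L_p\|$ block by block, one reduces everything to an $\ell_q$-over-$\bar k$ estimate against the $\ell_q$-over-$j$ norm of $2^{jtd}\|\gf^{-1}[\psi_j\gf f]\,|L_p\|$; the factor $2^{|\bar k|t}$ versus $2^{jtd}$ costs $2^{-t(jd-|\bar k|)}$, which is summable precisely when $t>0$ (with the counting factor $j^{d-1}$ absorbed), and the endpoint $t=0$ forces the use of the $\ell_q\hookrightarrow\ell_p$ or $\ell_q\hookrightarrow\ell_2$ type inequalities, explaining the restriction $q\ge\max(2,p)$. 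The regime $0<p\le1$ is where one cannot move the $L_p$-quasi-norm freely through sums, so one must invoke a Nikol'skij-type / maximal-function (Plancherel--Polya) inequality for entire functions of exponential type to handle overlapping supports; this forces the loss $t\ge\frac1p-1$ and pins down the endpoint case $t=\frac1p-1$, $q=\infty$.

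\textbf{Sufficiency, main case $t>\max(0,\frac1p-1)$.} First I would fix $f\in B^{td}_{p,q}(\R)$ and write $f=\sum_{j\ge0}f_j$ with $f_j=\gf^{-1}[\psi_j\gf f]$. For each $\bar k\in\N_0^d$, $\gf^{-1}[\varphi_{\bar k}\gf f]=\sum_{j}\gf^{-1}[\varphi_{\bar k}\psi_j\gf f]$ where by the support considerations only $j$ with $\max_i k_i-2\le j\le \max_i k_i +2$ contribute. For $p\ge1$ the multiplier $\varphi_{\bar k}$ (suitably rescaled) is bounded on $L_p$ uniformly, so $\|\gf^{-1}[\varphi_{\bar k}\gf f]\,|L_p\|\lesssim \|f_{\|\bar k\|_\infty}\,|L_p\|$ up to a bounded shift; for $p<1$ the same holds after applying the maximal inequality, at the price of enlarging the effective exponential type, which is harmless. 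Then
\[
\Big(\sum_{\bar k}2^{|\bar k|tq}\|\gf^{-1}[\varphi_{\bar k}\gf f]\,|L_p\|^q\Big)^{1/q}
\lesssim \Big(\sum_{j\ge0}\Big(\sum_{\|\bar k\|_\infty= j}2^{|\bar k|tq}\Big)\|f_j\,|L_p\|^q\Big)^{1/q},
\]
(with the inner sum replaced by a $\sup$ when $q=\infty$, and an extra H\"older step when $0<q<1$), and since $|\bar k|\le jd$ with the number of such $\bar k$ of order $j^{d-1}$, the inner factor is $\lesssim j^{d-1}2^{jtdq}$, which is dominated by $2^{jt'dq}$ for any $t'<t$ with $t'>\max(0,\frac1p-1)$; choosing $t'$ so that the $p<1$ maximal inequality still applies gives $\|f\,|S^t_{p,q}B\|\lesssim\|f\,|B^{td}_{p,q}\|$. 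The endpoint cases ($t=0$ with $q\ge\max(2,p)$, and $t=\frac1p-1$, $q=\infty$ for $p\le1$) require replacing the crude counting bound by the sharp block inequality: for $t=0$ one uses that $\sum_{\|\bar k\|_\infty= j}2^{0}=j^{d-1}$ is \emph{not} summable, so instead one sums in $\bar k$ first using $\ell_q\hookrightarrow\ell_{\max(2,p)}$ together with a Littlewood--Paley/Marcinkiewicz estimate that controls $\big\|(\sum_{\|\bar k\|_\infty=j}|\gf^{-1}\varphi_{\bar k}\gf f_j|^{\max(2,p)})^{1/\max(2,p)}\,|L_p\big\|$ by $\|f_j\,|L_p\|$; for $p\le1$, $t=\frac1p-1$, $q=\infty$ one uses instead the embedding $\ell_p\hookrightarrow\ell_\infty$ and the fact that $2^{|\bar k|(\frac1p-1)}$ summed in $\ell_p$ over $\|\bar k\|_\infty=j$ gives exactly $2^{jd(\frac1p-1)}$ times a constant.

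\textbf{Necessity.} Here I would produce, for each excluded $(t,p,q)$, an $f\in B^{td}_{p,q}(\R)\setminus S^t_{p,q}B(\R)$. The canonical device is a lacunary-type function whose Fourier transform is concentrated on a single ``diagonal'' frequency shell: take $g$ with $\gf g_j$ supported where $\|\xi\|_\infty\asymp2^j$ but spread over all $d$ coordinate directions, e.g. $\gf^{-1}[\psi_j\gf g]=\sum_{\|\bar m\|_\infty=j}2^{-j\,\text{(something)}}h_{\bar m}$ with $h_{\bar m}$ a modulated bump of mixed type; then the isotropic norm only sees one power of $2^j$ while the mixed norm sees the full count $j^{d-1}$, exposing the necessity of $t>0$ (and, at $t=0$, the exact $q$-threshold via sharpness of $\ell_q\hookrightarrow\ell_{\max(2,p)}$). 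For the constraint $t\ge\frac1p-1$ when $p<1$ I would use a single highly concentrated bump (a normalized indicator-type function at scale $2^{-j}$) for which $\|\cdot\,|L_p\|$ with $p<1$ genuinely loses a factor $2^{j(\frac1p-1)}$ when passed through many overlapping mixed blocks --- this is the standard obstruction showing the Nikol'skij loss above is unavoidable. \textbf{The main obstacle} I anticipate is precisely the bookkeeping of these endpoint cases: getting the sharp constant in the block inequality $\big\|(\sum_{\|\bar k\|_\infty=j}|\gf^{-1}\varphi_{\bar k}\gf f_j|^{r})^{1/r}|L_p\big\|\lesssim\|f_j|L_p\|$ with $r=\max(2,p)$ (a dominating-mixed Littlewood--Paley statement restricted to a shell), and dually, constructing counterexamples whose $B^{td}_{p,q}$-norm is controlled while their $S^t_{p,q}B$-norm diverges by exactly the logarithmic/$j^{d-1}$ factor; everything in the interior ($t$ strictly above the critical value) is routine once the support geometry and the $p<1$ maximal inequality are in place.
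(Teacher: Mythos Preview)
Your sufficiency argument is essentially the paper's. The decomposition $\varphi_{\bar k}=\sum_{j\in\square_{\bar k}}\psi_j\varphi_{\bar k}$, the multiplier bound $\|\gf^{-1}(\varphi_{\bar k}\psi_j\gf f)\,|L_p\|\le C\,2^{(jd-|\bar k|)(\frac1u-1)}\|\gf^{-1}\psi_j\gf f\,|L_p\|$ with $u=\min(1,p)$ (this is your ``maximal inequality''; in the paper it is Lemma~\ref{mul3}), and then summability of $\sum_{\bar k\in\Delta_j}2^{(jd-|\bar k|)(\frac1u-1-t)q}$ when $t>\frac1u-1$, is exactly what the paper does. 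One remark: for $t>0$ the inner sum $\sum_{\|\bar k\|_\infty=j}2^{|\bar k|tq}$ is already $\lesssim 2^{jtdq}$ with no logarithmic loss (geometric series dominated by the corner $\bar k=(j,\dots,j)$), so your detour through a $t'<t$ is unnecessary. For the endpoint $t=0$, $1<p\le\infty$, $q\ge\max(2,p)$ you propose a direct Littlewood--Paley argument, whereas the paper derives it by \emph{duality} from Theorem~\ref{besov4}. Your direct route does work: for $1<p\le 2$ use $\|\,\cdot\,|\ell_2(L_p)\|\le\|\,\cdot\,|L_p(\ell_2)\|$ together with \eqref{LP}; for $p\ge2$ use $\|\,\cdot\,|\ell_p(L_p)\|=\|\,\cdot\,|L_p(\ell_p)\|$ and the pointwise inequality $(\sum|a_{\bar k}|^p)^{1/p}\le(\sum|a_{\bar k}|^2)^{1/2}$; then $\ell_{\max(2,p)}\hookrightarrow\ell_q$ finishes. (Your ``$\ell_q\hookrightarrow\ell_{\max(2,p)}$'' is written backwards.) The endpoint $t=\frac1p-1$, $q=\infty$ is handled as in the paper.

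The genuine gap is in necessity for $0<p<1$. Your proposed ``highly concentrated bump at scale $2^{-j}$'' does \emph{not} disprove $B^{td}_{p,q}\hookrightarrow S^t_{p,q}B$ in the strip $0\le t<\frac1p-1$. Indeed, take $f_N(x)=2^{Nd}\varrho(2^Nx)$ with $\cf\varrho$ a bump; then for each relevant $j$ and $\bar k$ one has $\|\gf^{-1}\psi_j\gf f_N\,|L_p\|\asymp 2^{jd(1-\frac1p)}$ and $\|\gf^{-1}\varphi_{\bar k}\gf f_N\,|L_p\|\asymp 2^{|\bar k|(1-\frac1p)}$, so
\[
\|f_N\,|B^{td}_{p,q}\|^q\asymp\sum_{j\le N}2^{jd(t+1-\frac1p)q},\qquad
\|f_N\,|S^t_{p,q}B\|^q\asymp\sum_{\|\bar k\|_\infty\le N}2^{|\bar k|(t+1-\frac1p)q}.
\]
For $t<\frac1p-1$ the exponent $t+1-\frac1p$ is negative and \emph{both} sums are uniformly bounded in $N$; no contradiction. (At the exact endpoint $t=\frac1p-1$, $q<\infty$, your bump does give the ratio $\asymp N^{(d-1)/q}\to\infty$, so that sub-case is fine.) The paper closes this gap by \emph{duality} (Proposition~\ref{dual1}): assuming $B^{td}_{p,q}\hookrightarrow S^t_{p,q}B$ for $0<p<1$, $0\le t<\frac1p-1$, one obtains $S^{\,s}_{\infty,q'}B\hookrightarrow B^{\,ds}_{\infty,q'}$ with $s=\frac1p-1-t>0$, and this is immediately disproved by $g_k(x)=e^{ikx}$ since $ds>s$. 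Similarly, the necessity of $q=\infty$ at $t=\frac1p-1$ is obtained in the paper by duality (reducing to $S^0_{\infty,q'}B\hookrightarrow B^0_{\infty,q'}$, which forces $q'\le1$). You should either incorporate the duality mechanism or supply a different direct construction for the strip; the bump alone does not suffice.
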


\begin{remark}
 \rm
Again we have to refer to Hansen \cite{Hansen} for an earlier result in this direction.
He proved  $S^{td+ \varepsilon}_{p,q_0}B (\R) \hookrightarrow S^t_{p,q}B (\R)$
with $\varepsilon >0$ and $q_0,q$ arbitrary.
\end{remark}

Also in this situation we summarize what is known about the relation of 
$B^{td}_{p,q}(\R)$ and $S^t_{p,q} B(\R) $ in the remaining cases.

\begin{proposition}\label{besov15}
{\rm (i)}  Let  $0<p,q\leq \infty$ and $t<0$. Then we have $$
S^{t}_{p,q}B (\R) \hookrightarrow B^{td}_{p,q} (\R)\,.
$$
{\rm (ii)} Let $0 <p<1$, $0< t < \frac 1p - 1$ and $0 <  q \le \infty$. 
Then $B^{t}_{p,q}(\R)$ and $S^t_{p,q} B(\R) $ are not comparable.
\\
{\rm (iii)} Let $0 <p<1$, $t=0$ and $0 <  q \le p$. 
Then $S^0_{p,q} B(\R)  \hookrightarrow B^{0}_{p,q}(\R)$ follows.
\\
{\rm (iv)} Let $0 <p<1$, $t=0$  and $p <  q \le \infty$. 
Then $B^{0}_{p,q}(\R)$ and $S^0_{p,q} B(\R) $ are not comparable.
\end{proposition}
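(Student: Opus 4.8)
\emph{Plan.} Only assertion (i) requires substantial work. Parts (iii) and (iv) follow from Theorems~\ref{besov4} and \ref{besov1} together with the closed graph theorem: in (iii), since $0<p<1$ we have $\min(p,2)=p$, so $0<q\le p$ is exactly one of the admissible cases of Theorem~\ref{besov4} and $S^0_{p,q}B(\R)\hookrightarrow B^0_{p,q}(\R)$ follows; in (iv), $q>p=\min(p,2)$, so Theorem~\ref{besov4} forbids the continuous embedding $S^0_{p,q}B(\R)\hookrightarrow B^0_{p,q}(\R)$, and since both spaces are quasi-Banach and continuously embedded in $\cs'(\R)$ the closed graph theorem gives $S^0_{p,q}B(\R)\setminus B^0_{p,q}(\R)\neq\emptyset$, while $0<p<1$ rules out every case of Theorem~\ref{besov1} with $t=0$, so $B^0_{p,q}(\R)\setminus S^0_{p,q}B(\R)\neq\emptyset$. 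Throughout I take $d\ge 2$; for $d=1$ all spaces coincide and $td=t$.

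\emph{Part (ii)} (comparison of $B^{td}_{p,q}(\R)$ with $S^t_{p,q}B(\R)$). The hypotheses $0<p<1$, $0<t<\tfrac1p-1$ again miss every case of Theorem~\ref{besov1}, so $B^{td}_{p,q}(\R)\not\hookrightarrow S^t_{p,q}B(\R)$ and hence $B^{td}_{p,q}(\R)\setminus S^t_{p,q}B(\R)\neq\emptyset$. For the reverse I would use $td>t$ (true since $t>0$, $d\ge2$): choose $L_p$-normalised functions $f_m$ ($m\in\N$) whose Fourier supports lie in pairwise disjoint regions, each contained in the single block $\varphi_{(m,0,\dots,0)}$ and in the single isotropic ring $\psi_m$; then $\|f_m|S^t_{p,q}B(\R)\|\asymp 2^{mt}$ whereas $\|f_m|B^{td}_{p,q}(\R)\|\asymp 2^{mtd}$. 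For the superposition $f:=\sum_m a_m f_m$ one gets $\|f|S^t_{p,q}B(\R)\|\asymp\|(a_m2^{mt})_m|\ell_q\|$ and $\|f|B^{td}_{p,q}(\R)\|\asymp\|(a_m2^{mtd})_m|\ell_q\|$, so $a_m:=2^{-mt}m^{-2/q}$ (or $a_m:=2^{-mt}$ if $q=\infty$) produces $f\in S^t_{p,q}B(\R)\setminus B^{td}_{p,q}(\R)$, because $t(d-1)>0$.

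\emph{Part (i).} Fix $f\in S^t_{p,q}B(\R)$ and set $b_{\bar k}:=\|\gf^{-1}[\varphi_{\bar k}\gf f]|L_p(\R)\|$, $c_{\bar k}:=2^{|\bar k|t}b_{\bar k}$, so $\|f|S^t_{p,q}B(\R)\|=\|(c_{\bar k})_{\bar k}|\ell_q\|$. Using $\supp\psi_j\subset\{2^{j-1}\le\|x\|_\infty\le 3\cdot2^{j-1}\}$ and $\supp\varphi_{\bar k}\subset\prod_i\{|x_i|\le 3\cdot2^{k_i-1}\}$, one sees that $\psi_j\varphi_{\bar k}\not\equiv0$ only when $\bar k\in G_j:=\{\bar k\in\N_0^d:\ |j-\max_i k_i|\le1\}$, a finite set of cardinality $\lesssim(1+j)^{d-1}$, and each $\bar k$ lies in at most three of the $G_j$. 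Hence $\gf^{-1}[\psi_j\gf f]=\gf^{-1}[\psi_j\gf S_j f]$ with $S_j f:=\sum_{\bar k\in G_j}\gf^{-1}[\varphi_{\bar k}\gf f]$, and $\gf S_j f$ is supported in a cube of side $\lesssim2^j$; since $\psi_j(2^j\cdot)=\psi_0(\cdot)-\psi_0(2\cdot)$ for $j\ge1$, a Fourier multiplier lemma gives $\|\gf^{-1}[\psi_j\gf f]|L_p(\R)\|\lesssim\|S_j f|L_p(\R)\|$ uniformly in $j$, and the $\mu$-triangle inequality with $\mu:=\min(1,p,q)$ yields $\|\gf^{-1}[\psi_j\gf f]|L_p(\R)\|^{\mu}\lesssim\sum_{\bar k\in G_j}b_{\bar k}^{\mu}$. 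For $\bar k\in G_j$ one has $2^{jtd}\le C\,2^{(\max_i k_i)td}=C\,2^{|\bar k|t}2^{\sigma(\bar k)t}$ with $\sigma(\bar k):=d\max_i k_i-|\bar k|=\sum_i(\max_l k_l-k_i)\ge0$ and $2^{\sigma(\bar k)t}\le1$ since $t<0$; hence, for $q<\infty$,
\[
\|f|B^{td}_{p,q}(\R)\|^{q}=\sum_j\Big(2^{jtd\mu}\,\|\gf^{-1}[\psi_j\gf f]|L_p(\R)\|^{\mu}\Big)^{q/\mu}\lesssim\sum_j\Big(\sum_{\bar k\in G_j}2^{\sigma(\bar k)t\mu}\,c_{\bar k}^{\mu}\Big)^{q/\mu}.
\]
If $q>\mu$, apply Hölder's inequality \emph{within each slab} $G_j$ (exponents $\tfrac{q}{q-\mu}$ and $\tfrac q\mu$) to split off $\big(\sum_{\bar k\in G_j}2^{\sigma(\bar k)t\mu}\big)^{(q-\mu)/q}$; this is bounded uniformly in $j$ because $G_j$ meets at most three levels $\{\max_i k_i=\ell\}$ and $\sum_{\bar k:\,\max_i k_i=\ell}2^{\sigma(\bar k)t\mu}\le d\,(1-2^{t\mu})^{-(d-1)}$ is a product of convergent geometric series. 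What remains is $\sum_j\sum_{\bar k\in G_j}2^{\sigma(\bar k)t\mu}c_{\bar k}^{q}\le3\sum_{\bar k}c_{\bar k}^{q}=3\,\|f|S^t_{p,q}B(\R)\|^{q}$, since each $\bar k$ lies in at most three $G_j$ and $2^{\sigma(\bar k)t\mu}\le1$; this proves (i). The cases $q=\infty$ and $q\le\mu$ need no Hölder step, and for $\min(p,q)\ge1$ one could alternatively obtain (i) by duality from Theorem~\ref{besov1}.

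\emph{The main obstacle.} It is exactly the last summation. Using the $\mu$-triangle inequality and summing crudely would cost a factor $|G_j|^{q/\mu-1}\sim(1+j)^{(d-1)(q/\mu-1)}$, not summable against the weight $2^{jtdq}$ in general; and a global Hölder over all $\bar k\in\N_0^d$ fails too, since $\sigma(\bar k)=0$ along the whole diagonal $\{(n,\dots,n):n\in\N\}$, so $\sum_{\bar k}2^{\vartheta\sigma(\bar k)}=\infty$ for every $\vartheta<0$. Restricting Hölder to the thin slabs $G_j$ removes both difficulties: inside a slab the diagonal supplies only boundedly many terms, while the off-diagonal gain $2^{\sigma(\bar k)t}$ sums to a geometric series bounded independently of $j$; the outer sum over $j$ is harmless because each $\bar k$ belongs to only boundedly many slabs. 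For (ii) the corresponding routine point is the disjointness of the Fourier supports of the $f_m$, which is what makes the quasi-norms of $\sum_m a_m f_m$ decouple into the stated sequence quasi-norms.
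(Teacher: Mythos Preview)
Your proof is correct and follows essentially the same route as the paper: part~(i) reruns the sufficiency argument of Theorem~\ref{besov4} with $B^t_{p,q}$ replaced by $B^{td}_{p,q}$ (your $\sigma(\bar k)$ bookkeeping and slab-by-slab H\"older are exactly what the paper does, with $u=\min(1,p)$ in place of your $\mu$), while parts~(ii)--(iv) reduce to Theorems~\ref{besov4} and~\ref{besov1} together with simple test functions (the paper uses Example~1 with $a_j=\delta_{j,\ell}$ where you build functions localized in the region $\{\varphi_{(m,0,\dots,0)}=1\}\cap\{\psi_m=1\}$, which is the same idea and arguably cleaner). Your explicit appeal to the closed graph theorem to pass from ``no continuous embedding'' to ``no set inclusion'' is a touch more careful than the paper, which leaves that passage implicit.
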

\begin{remark}
 \rm
Obviously the case $t=0$ is covered by Proposition \ref{besov14}. 
The set  $\{(p,0): \: 1 \le p \le \infty\}$ is part of the critical line  of \eqref{ws-03}. 
Also for us it was surprising that the critical line for $0 <p<1$ is given by $\frac 1p -1$. 
\end{remark}
$$
\begin{tikzpicture}
\draw[->, ](0,0) -- (7,0);
\draw[->, ] (0,-2) -- (0,2);
\draw[->, ] (7.5,2) -- (5.5,1.5);
\node [] at (8,2.2) {{\rm critical line}};

\node[below] at (-0.2,0) {$0$};
\node  at (3.5,-0.3) {1};
\node [right] at (5,2.2) {$t=\frac{1}{p}-1$};

\node [left] at (7.2,-0.4) {$\frac{1}{p}$};
\node [left] at (0,2) {$t$};

\node [] at (2.3,1.2) {$ B^{td}_{p,q}(\R)  \hookrightarrow S^{t}_{p,q}B(\R)$};

\node [] at (3.5,-1.3) {{$ S^{t}_{p,q}B(\R) \hookrightarrow B^{td}_{p,q}(\R)$}};
\node [] at (6,0.7) {{\rm not comparable}};

\fill (0,0) circle (2pt);
\fill (3.5,0) circle (2pt);
\path[draw, line width=1.5pt](0,0) -- (3.5,0);
\path[draw, line width=1.5pt](3.5,0) -- (5.7,2);
\node [right] at (3,-2.5) {\textsc{Figure 2.}};
\end{tikzpicture}
$$

These embeddings are optimal in the following sense.

\begin{theorem}\label{besov2}
 Let $0<p_0,p, q_0,q \le  \infty$ and $t_0,t\in \re$. 
Let $p,q$ and $t$ be fixed. 
 Within all spaces   $B^{t_0}_{p_0,q_0} (\R)$ satisfying
\beqq
B^{t_0}_{p_0,q_0} (\R) \hookrightarrow S^{t}_{p,q} B(\R) 
\eeqq
the class $B^{td}_{p,q} (\R)$ is the largest one.
\end{theorem}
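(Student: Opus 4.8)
The plan is to fix $p,q,t$ and to show that whenever $B^{t_0}_{p_0,q_0}(\R) \hookrightarrow S^t_{p,q}B(\R)$, one has the factorisation $B^{t_0}_{p_0,q_0}(\R) \hookrightarrow B^{td}_{p,q}(\R)$; combined with Theorem \ref{besov1} (which guarantees $B^{td}_{p,q}(\R) \hookrightarrow S^t_{p,q}B(\R)$ in exactly the admissible parameter range, so that $B^{td}_{p,q}(\R)$ is indeed a competitor), this gives the maximality. The key structural observation is that the isotropic Besov scale is linearly ordered in a transparent way: $B^{t_0}_{p_0,q_0}(\R) \hookrightarrow B^{t_1}_{p_1,q_1}(\R)$ holds precisely under the classical Jawerth--Franke-type conditions (comparison of $t_i - d/p_i$, signs, and the $q$-parameters on the critical line). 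So the whole problem reduces to extracting, from the hypothesis $B^{t_0}_{p_0,q_0}(\R) \hookrightarrow S^t_{p,q}B(\R)$, enough numerical inequalities on $(t_0,p_0,q_0)$ to force $B^{t_0}_{p_0,q_0}(\R) \hookrightarrow B^{td}_{p,q}(\R)$.

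First I would establish necessary conditions for $B^{t_0}_{p_0,q_0}(\R) \hookrightarrow S^t_{p,q}B(\R)$ by testing against explicit families of distributions, exactly the kind already used to prove Theorem \ref{besov1}. Testing with a single dilated bump $f_j = \cfi[\psi_j \cf g]$ (a "one-block" function whose Fourier support sits in a dyadic isotropic annulus of radius $2^j$) forces $p_0 \le p$ and $t_0 - d/p_0 \ge td - d/p$ (the latter because the dominating-mixed norm of such a function behaves like $2^{jtd}$ while its isotropic norm behaves like $2^{jt_0}$, up to the Sobolev shift coming from $p_0 \ne p$); this is the decisive inequality, since $td - d/p = d(t - 1/p)$ is precisely the "isotropic" value of the dominating-mixed smoothness $t$ rescaled by $d$. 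Testing with lacunary sums $\sum_j \lambda_j f_j$ supported in disjoint annuli pins down the relation between $q_0$ and $q$ on the critical line $t_0 - d/p_0 = d(t-1/p)$. A parallel family of tensor-product test functions $\prod_{i} g_i(x_i)$ (available because $S^t_{p,q}B$ has a cross quasi-norm, Remark \ref{blabla}(ii)) handles the borderline $q=\infty$, $0<p\le 1$ case and confirms that no slack is lost there.

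Then I would feed these inequalities into the known embedding criterion for the isotropic scale. In the generic range $t > \max(0,\tfrac1p-1)$ (and its $t=0$ and $t=1/p-1$ boundary analogues from Theorem \ref{besov1}) the inequalities $p_0\le p$ and $t_0-d/p_0 \ge d(t-1/p)$ are exactly what is needed for $B^{t_0}_{p_0,q_0}(\R)\hookrightarrow B^{td}_{p,q}(\R)$ off the critical line, while on the critical line the extracted $q$-condition supplies the missing ingredient; one checks case by case against the three bullet ranges of Theorem \ref{besov1} that the target space $B^{td}_{p,q}(\R)$ really does embed into $S^t_{p,q}B(\R)$, so the chain closes. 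The main obstacle I anticipate is the borderline bookkeeping: proving the $q$-parameter necessary conditions on the critical line with the correct (sharp) inequality — in particular distinguishing the cases $p<\infty$ vs.\ $p=\infty$ and $p\ge 1$ vs.\ $p<1$ — requires carefully chosen lacunary and tensor test functions and a precise computation of both quasi-norms, rather than any soft argument; everything else is a matter of assembling classical facts about the two scales.
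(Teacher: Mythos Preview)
Your proposal is correct and follows essentially the same route as the paper: extract $p_0 \le p$ (the paper's Lemma \ref{p0p1}, via a dilated low-frequency bump), then $t_0 - d/p_0 \ge td - d/p$ and, on the critical line, $q_0 \le q$ via explicit test functions, and feed these into the classical isotropic embedding criterion. One correction and one simplification: the ``one-block'' test function must have Fourier support at the \emph{diagonal corner} $(2^j,\ldots,2^j)$ of the shell (the paper's Example~5), not spread over the full isotropic annulus, for its dominating-mixed norm to scale like $2^{jtd}$; and with that single diagonal family the argument already works uniformly for all parameters, so the tensor-product constructions and borderline case splits you anticipate are unnecessary.
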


\begin{theorem}\label{besov3}
 Let $0<p_0,p, q_0,q \le  \infty$ and $t_0,t\in \re$. 
Let $p,q$ and $t$ be fixed. 
 Within all spaces   $S^{t_0}_{p_0,q_0}B (\R)$ satisfying
\beqq
B^{td}_{p,q} (\R) \hookrightarrow S^{t_0}_{p_0,q_0} B(\R) 
\eeqq
the class $S^{t}_{p,q} B(\R)$ is the smallest one.
\end{theorem}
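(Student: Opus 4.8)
The plan is to combine the known characterization of embeddings between Besov spaces of dominating mixed smoothness with a test‑function analysis of the hypothesis $B^{td}_{p,q}(\R)\hookrightarrow S^{t_0}_{p_0,q_0}B(\R)$. Recall (see \cite{ST}, \cite{Vybiral}, \cite{Hansen}) that
\[
S^{s}_{p,q}B(\R)\hookrightarrow S^{s_0}_{p_0,q_0}B(\R)
\]
holds if and only if $p\le p_0$ and either $s-\tfrac1p>s_0-\tfrac1{p_0}$, or $s-\tfrac1p=s_0-\tfrac1{p_0}$ together with $q\le q_0$. Hence it suffices to show that the hypothesis forces (i) $p\le p_0$, (ii) $t-\tfrac1p\ge t_0-\tfrac1{p_0}$, and (iii) $q\le q_0$ whenever $t-\tfrac1p=t_0-\tfrac1{p_0}$; applying the criterion with $s=t$, $s_0=t_0$ then yields $S^{t}_{p,q}B(\R)\hookrightarrow S^{t_0}_{p_0,q_0}B(\R)$, which is the asserted minimality. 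When, in addition, the parameters satisfy the conditions of Theorem \ref{besov1}, one has $B^{td}_{p,q}(\R)\hookrightarrow S^{t}_{p,q}B(\R)$, so that $S^{t}_{p,q}B(\R)$ is itself a member of the family and is literally its smallest element; otherwise it is its largest lower bound.

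For (i) I would use low‑frequency dilates. Fix $g\in\cs(\R)$ with $\cf g\in C_0^\infty(\R)$ supported in a small cube around the origin and consider $g(\lambda\,\cdot)$, $0<\lambda<1$. Its Fourier transform is supported where $\psi_0\equiv1$, $\psi_{j}\equiv0$ for $j\ge1$, $\varphi_{\bar 0}\equiv1$ and $\varphi_{\bar k}\equiv0$ for $\bar k\ne\bar 0$, so that $\|g(\lambda\,\cdot)\,|B^{td}_{p,q}(\R)\|\asymp\|g(\lambda\,\cdot)\,|L_p(\R)\|\asymp\lambda^{-d/p}$ and likewise $\|g(\lambda\,\cdot)\,|S^{t_0}_{p_0,q_0}B(\R)\|\asymp\lambda^{-d/p_0}$. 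Inserting this into the embedding inequality and letting $\lambda\to0$ gives $\tfrac d{p_0}\le\tfrac dp$.

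For (ii) and (iii) I would test with bumps modulated towards the ``diagonal corner'' of frequency space. Choose a constant $c\in(\tfrac34,1)$, put $\beta_j:=(c\,2^j,\dots,c\,2^j)$ and $\lambda_j(x):=e^{i\beta_j\cdot x}\,g(2^j x)$, where $g$ is as above but with $\cf g$ supported in a cube of side $<\tfrac1{10}$. A short inspection of the supports of the $\psi_j$ and of the $\varphi_{\bar k}$ shows that, for $j$ large, $\cf\lambda_j$ is concentrated in a region in which $\psi_j\equiv1$, $\psi_{j'}\equiv0$ for $j'\ne j$, $\varphi_{(j,\dots,j)}\equiv1$ and $\varphi_{\bar k}\equiv0$ for $\bar k\ne(j,\dots,j)$; this uses that the corner $\{\xi:\xi_i\asymp 2^j\text{ for all }i\}$ lies inside the isotropic dyadic annulus of index $j$. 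Consequently
\[
\|\lambda_j\,|B^{td}_{p,q}(\R)\|\asymp 2^{jd(t-1/p)}\qquad\text{and}\qquad\|\lambda_j\,|S^{t_0}_{p_0,q_0}B(\R)\|\asymp 2^{jd(t_0-1/p_0)},
\]
so the embedding forces $2^{jd[(t_0-1/p_0)-(t-1/p)]}\lesssim1$ for all large $j$, i.e.\ (ii). In the limiting case $t-\tfrac1p=t_0-\tfrac1{p_0}$ I would superpose: for a finitely supported scalar sequence $(b_j)$ set $f:=\sum_j b_j\,2^{-jd(t-1/p)}\lambda_j$. Since the isotropic blocks and the dominating mixed blocks activated by the various $\lambda_j$ are pairwise disjoint, $\|f\,|B^{td}_{p,q}(\R)\|\asymp\|(b_j)\,|\ell_q\|$, and, using the equality of the shifted smoothness parameters, $\|f\,|S^{t_0}_{p_0,q_0}B(\R)\|\asymp\|(b_j)\,|\ell_{q_0}\|$. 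The embedding then yields $\ell_q\hookrightarrow\ell_{q_0}$, hence (iii).

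The explicit norm computations for the test functions are routine: they rely only on the Fourier‑support localization and on the behaviour of $L_p$‑quasi‑norms under dilation and modulation. The main obstacle is the bookkeeping in the modulated‑bump step, where one has to choose the modulation centres $\beta_j$ and the width of $\supp\cf g$ so that $\cf\lambda_j$ is simultaneously captured by exactly one isotropic block and by exactly one dominating mixed block, with both cut‑offs identically equal to $1$ there. Finally one should check that the degenerate cases $p,q,p_0,q_0\in\{\infty\}$ are covered by the same formulas with the usual conventions ($1/\infty=0$, and $\ell_q\hookrightarrow\ell_{q_0}$ whenever $q\le q_0$, in particular always for $q_0=\infty$).
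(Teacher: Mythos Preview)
Your proposal is correct and follows essentially the same route as the paper. The paper uses its Example~6 (dilates $\varrho(2^{-j}\cdot)$ with $\cf\varrho$ supported near the origin) to obtain $p\le p_0$, and its Example~5 (functions with Fourier transform $g_{(j,\dots,j)}$ supported in the diagonal corner $\{\xi:\xi_i\asymp 2^j\}$, first with $a_j=\delta_{j,\ell}$ and then with $a_j=2^{-jd(t+1-1/p)}$) to obtain $t_0-\tfrac1{p_0}\le t-\tfrac1p$ and $q\le q_0$ in the limiting case; your low-frequency dilates and modulated bumps $\lambda_j(x)=e^{i\beta_j\cdot x}g(2^jx)$ are just an alternative realization of the same Fourier-support localization, leading to identical norm computations up to normalization.
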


\begin{remark}
 \rm
 Let us come back to the chain of  embeddings 
\beqq
W^{td}_2(\R)=B^{td}_{2,2} (\R) \hookrightarrow S^t_2W(\R) = S^{t}_{2,2} B(\R)  \hookrightarrow W^t_2(\R)= B^{t}_{2,2} (\R)
\eeqq
discussed in the Introduction. Employing Theorems \ref{besov5}, \ref{besov2} and \ref{besov3} we obtain the following optimality assertions.
\begin{itemize}
 \item Within all spaces   $S^{t_0}_{p_0,q_0}B (\R)$ satisfying $S^{t_0}_{p_0,q_0}B (\R) \hookrightarrow W^{t}_{2} (\R)$ 
the class $S^{t}_{p,q}B (\R)$ is the largest one.
\item
 Within all spaces   $B^{t_0}_{p_0,q_0} (\R)$ satisfying
$ B^{t_0}_{p_0,q_0} (\R) \hookrightarrow S^{t}_{2}W(\R) $
the class $B^{td}_{2,2} (\R) = W^{td}_2 (\R)$ is the largest one.
\item
Within all spaces   $S^{t_0}_{p_0,q_0}B (\R)$ satisfying
$W^{td}_{2} (\R) \hookrightarrow S^{t_0}_{p_0,q_0} B(\R) $
the class $S^{t}_{2} W(\R)$ is the smallest one.
\end{itemize}

\end{remark}


\section{Proofs}\label{proofs}


To prove our main results we will apply essentially four different tools:  
Fourier multipliers;  complex interpolation; assertions on dual spaces; some  test functions. 
In what follows we collect what is needed.


\subsection{Fourier multipliers}


Let us recall some Fourier multiplier assertions. 
For a compact subset $\Omega \in \R$ we introduce the notation
\beqq
L_p^{\Omega}(\R)=\{f\in \cs'(\R): \ \supp \gf f\subset \Omega\, ,\ f\in L_p(\R)\} \, .
\eeqq
For those spaces improved convolution inequalities hold, see  \cite[Proposition 1.5.1]{Tr83}.

\begin{lemma}\label{mul}
Let $\Omega$  and $\Gamma$ be compact subsets of $\R$. Let $0<p\leq \infty$ and put  $u:=\min(p,1)$. Then there exists a positive constant $c$ such that
\beqq
\| \gf^{-1}M\gf f|L_p(\R)\|\leq c\, \|\gf^{-1}M|L_u(\R)\|\cdot \|f|L_p(\R)\|
\eeqq
holds for all $f\in L_p^{\Omega}(\R)$ and all $\gf ^{-1}M\in L_u^{\Gamma}(\R)$.
\end{lemma}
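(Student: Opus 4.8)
\textbf{Proof plan for Lemma \ref{mul}.}
The plan is to reduce the anisotropic statement to the classical one-dimensional (or isotropic) multiplier theorem of Nikol'skij type already available in \cite[Proposition 1.5.1]{Tr83}, and then to track the role of the two compact sets $\Omega$ and $\Gamma$ carefully. First I would fix an auxiliary function: choose $\lambda \in C_0^\infty(\R)$ with $\lambda \equiv 1$ on a neighbourhood of $\Omega$ and $\supp \lambda$ still compact. Since $\supp \gf f \subset \Omega$, we have $\gf^{-1}[\lambda \, \gf f] = f$, hence for any admissible $M$,
\[
\gf^{-1}[M \, \gf f] = \gf^{-1}\big[(\lambda M)\, \gf f\big]\, ,
\]
so it suffices to estimate the right-hand side. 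The point of this reduction is that $\lambda M$ now has compact support contained in a fixed compact set depending only on $\Omega$ and $\Gamma$, which lets us apply a convolution inequality on $L_p^{\widetilde\Omega}(\R)$ for a suitable enlargement $\widetilde\Omega$.

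Next I would invoke the improved convolution inequality of \cite[Proposition 1.5.1]{Tr83}: for $f \in L_p^{\Omega}(\R)$ and $g = \gf^{-1}[\lambda M] \in L_u^{\widetilde\Gamma}(\R)$ one has
\[
\| f * g \, | L_p(\R)\| \le c\, \| g\, | L_u(\R)\| \cdot \| f\, | L_p(\R)\|\, ,
\]
with $u = \min(p,1)$ and $c$ depending only on $p$ and on the compact sets involved. Unravelling the Fourier side, $f * g$ equals (a constant multiple of) $\gf^{-1}[(\lambda M)\, \gf f] = \gf^{-1}[M\, \gf f]$. It then remains to replace $\|\gf^{-1}[\lambda M]\, | L_u(\R)\|$ by $\|\gf^{-1}M \, | L_u(\R)\|$; this is done by writing $\gf^{-1}[\lambda M] = \gf^{-1}\lambda * \gf^{-1}M$ and using the quasi-triangle/convolution estimate in $L_u$ together with $\gf^{-1}\lambda \in L_1(\R) \cap L_u(\R)$ (a fixed Schwartz function), which absorbs into the constant $c$. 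One should be slightly careful here because for $u<1$ Young's inequality in the usual form fails, but since $\gf^{-1}M$ is assumed to have Fourier support in the \emph{fixed} compact set $\Gamma$, the convolution $\gf^{-1}\lambda * \gf^{-1}M$ again has band-limited Fourier support, and on such spaces the needed inequality $\|h_1 * h_2\,|L_u\| \le c\, \|h_1\,|L_1\|\,\|h_2\,|L_u\|$ does hold (this is exactly the content of the band-limited convolution inequalities in \cite[Section 1.5]{Tr83}).

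The main obstacle, as indicated above, is the behaviour for $p<1$ (hence $u<1$), where elementary Young-type inequalities are unavailable and one must genuinely use band-limitedness at every step; in particular one has to make sure that multiplying $M$ by the cut-off $\lambda$ does not destroy the Fourier-support condition that makes the convolution estimates valid, and that the resulting constant depends only on $p$, $\Omega$, $\Gamma$ and not on $f$ or $M$. Once the compact-support bookkeeping is set up cleanly, the rest is a routine combination of $\gf^{-1}[M\gf f] = \gf^{-1}[(\lambda M)\gf f]$, the band-limited convolution inequality, and absorbing the fixed factor $\gf^{-1}\lambda$ into the constant. This gives the claimed bound
\[
\| \gf^{-1}M\gf f\, | L_p(\R)\| \le c\, \|\gf^{-1}M\, | L_u(\R)\| \cdot \|f\, | L_p(\R)\|
\]
for all $f \in L_p^{\Omega}(\R)$ and all $\gf^{-1}M \in L_u^{\Gamma}(\R)$.
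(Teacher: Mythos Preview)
The paper does not actually prove this lemma; it simply records it as a restatement of \cite[Proposition~1.5.1]{Tr83} (the sentence preceding the lemma says exactly this). So there is nothing to compare your argument against except the observation that the result \emph{is} that proposition, applied verbatim with $\Omega$ and $\Gamma$ as the two compact Fourier-support sets.

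Your proposal is not wrong, but it is circuitous in a way that hides this. You invoke \cite[Proposition~1.5.1]{Tr83} as your main tool, yet that proposition already delivers
\[
\|\,\gf^{-1}M\gf f\,|L_p(\R)\|\le c\,\|\gf^{-1}M\,|L_u(\R)\|\cdot\|f\,|L_p(\R)\|
\]
directly for $f\in L_p^{\Omega}(\R)$ and $\gf^{-1}M\in L_u^{\Gamma}(\R)$; there is no need for the auxiliary cut-off $\lambda$. Introducing $\lambda$ forces you into a second application of the same band-limited convolution inequality to undo the cut-off, which is essentially using the lemma to prove the lemma. (Your remark that one needs $\|h_1*h_2\,|L_u\|\le c\,\|h_1\,|L_1\|\,\|h_2\,|L_u\|$ for $u<1$ is also slightly off: the band-limited inequality gives $\|h_1\,|L_u\|$ on the right, not $\|h_1\,|L_1\|$, though this is harmless here since $\gf^{-1}\lambda\in\cs(\R)$.) In short: drop the cut-off step and simply cite \cite[Proposition~1.5.1]{Tr83}; that is precisely what the paper does.
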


Lemma \ref{mul} and the homogeneity properties of the Fourier transform yield the following.

\begin{lemma}\label{mul3} 
Let $(\psi_j)_{j=0}^\infty$ and $(\varphi_{\bar{k}})_{\bar{k} \in\N_0^d}$ be the two decompositions of unity defined in \eqref{unity1} and \eqref{unity2}, respectively.
Let $u=\min(1,p)$. 
Then there exists a positive constant $C$  such that
\be\label{ct3}
\| \, \gf^{-1}(\psi_j\varphi_{\bar{k}}\gf f)\, |L_p(\R)\| \leq C\, \|\gf^{-1}\varphi_{\bar{k}}\gf f|L_p(\R)\| 
\ee
and
\be\label{ct4}
\| \gf ^{-1}(\varphi_{\bar{k}}\psi_j\gf f)|L_p(\R)\| \leq C\, 2^{(jd-|\bar{k}|)(\frac{1}{u}-1)}\|\gf ^{-1}\psi_j \gf f|L_p(\R)\| 
\ee
hold for all $j \in \N_0$, all $k \in \N_0^d$  and all $f \in \cs'(\R)$ with finite  right-hand sides.
\end{lemma}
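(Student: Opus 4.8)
The strategy is to reduce both inequalities to Lemma~\ref{mul} applied to suitable building blocks, after identifying the Fourier supports involved. The starting observation is that $\psi_j\varphi_{\bar k}$ is supported in the intersection of the dyadic ring of width $\asymp 2^j$ (isotropic) with the dyadic box $\prod_{i=1}^d\{|x_i|\le c2^{k_i}\}\setminus\{\dots\}$; in particular it is supported in a box whose $i$-th side has length $\asymp 2^{\min(j,k_i)}$ (and the product is nontrivial only when $j$ and $\max_i k_i$ are comparable, but we do not need that quantitatively here). Accordingly, for \eqref{ct3} I would write $\gf^{-1}(\psi_j\varphi_{\bar k}\gf f)=\gf^{-1}(\psi_j\,\gf g)$ with $g=\gf^{-1}(\varphi_{\bar k}\gf f)$, so that $g\in L_p^{\Omega}(\R)$ with $\Omega=\supp\varphi_{\bar k}$. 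Lemma~\ref{mul} then gives $\|\gf^{-1}(\psi_j\varphi_{\bar k}\gf f)|L_p\|\le c\,\|\gf^{-1}(\psi_j\,\chi)|L_u\|\cdot\|g|L_p\|$ for any $\chi\in C_0^\infty$ equal to $1$ on a neighbourhood of $\Omega$ that is still compatible with the support of $\psi_j$; the point is that the multiplier norm $\|\gf^{-1}(\psi_j\chi)|L_u(\R)\|$ is bounded \emph{uniformly} in $j$ and $\bar k$, which one checks by a scaling argument: $\psi_j\chi$ is, up to anisotropic dilation by the factors $2^{\min(j,k_i)}$ in the $i$-th variable, a fixed Schwartz-type bump, and the $L_u$-quasinorm of the inverse Fourier transform is invariant under such dilations up to a Jacobian that cancels. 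This yields \eqref{ct3}.

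For \eqref{ct4} the roles are exchanged: write $\gf^{-1}(\varphi_{\bar k}\psi_j\gf f)=\gf^{-1}(\varphi_{\bar k}\,\gf h)$ with $h=\gf^{-1}(\psi_j\gf f)\in L_p^{\Gamma}(\R)$, $\Gamma=\supp\psi_j\subset\{\|x\|_\infty\le 3\cdot2^{j-1}\}$. Now Lemma~\ref{mul} gives a bound by $\|\gf^{-1}(\varphi_{\bar k}\,\tilde\chi)|L_u(\R)\|\cdot\|h|L_p\|$ where $\tilde\chi$ is a bump adapted to $\Gamma$. This time the relevant multiplier is essentially $\varphi_{\bar k}$ localised to a region of isotropic size $2^j$: in each variable $i$ with $k_i\le j$ it behaves like a fixed bump of width $2^{k_i}$ (contributing a bounded factor after rescaling), while in each variable with $k_i>j$ the function $\varphi_{k_i}$ is cut off to the interval $|x_i|\lesssim 2^j$, and on such a piece one only controls the $L_u$-quasinorm of its inverse Fourier transform with a loss. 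The homogeneity computation here is the crux: rescaling the $i$-th variable by $2^j$ turns the cut-off piece of $\varphi_{k_i}$ into a bump oscillating at frequency $\asymp 2^{k_i-j}$, whose $\gf^{-1}$ has $L_u(\re)$-quasinorm $\asymp 2^{(k_i-j)(1/u-1)}$ (the standard estimate $\|\gf^{-1}(\text{smooth bump at scale }\lambda)|L_u\|\asymp\lambda^{1/u-1}$ for $\lambda\ge 1$, $u\le 1$). Since no loss occurs in the variables with $k_i\le j$, multiplying the one-dimensional factors gives an overall factor $\prod_{i:k_i>j}2^{(k_i-j)(1/u-1)}$. One then checks $\sum_{i:k_i>j}(k_i-j)\le |\bar k|-jd$ when $|\bar k|\ge jd$ (and the whole estimate is only needed, or only sharp, in that regime; when $|\bar k|<jd$ the exponent in \eqref{ct4} is nonpositive and the inequality is weaker, so it suffices to absorb everything into a constant, or to note the left-hand side vanishes unless the supports meet), which yields exactly the claimed factor $2^{(jd-|\bar k|)(1/u-1)}$.

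The main obstacle is bookkeeping the anisotropic scaling cleanly: one must verify that the uniform (resp. controlled) bound on the $L_u$-multiplier quasinorms really is dilation-invariant in the tensor-product/anisotropic setting, i.e. that $\|\gf^{-1}(g(2^{-a_1}\cdot,\dots,2^{-a_d}\cdot))|L_u(\R)\|=2^{(\sum a_i)(1/u-1)}\|\gf^{-1}g|L_u(\R)\|$ and hence that fixed smooth bumps contribute only constants while scaled-down cut-offs contribute the stated powers. A secondary point is to choose the auxiliary cut-off functions $\chi,\tilde\chi$ with supports that simultaneously contain the relevant $\supp\varphi_{\bar k}$ (resp. $\supp\psi_j$) and stay inside a controlled dilate, so that Lemma~\ref{mul} applies with the multiplier localised exactly as described; this is routine once the support geometry above is recorded. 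I would also note that the constant $C$ in the statement is independent of $j,\bar k,f$ precisely because all scaling factors have been extracted explicitly. Finally, I would appeal to $d=1$ versions of these bounds (which are exactly \cite[Proposition 1.5.1]{Tr83}) applied coordinatewise, using that $\psi_j\varphi_{\bar k}$ and the rescaled pieces factor as tensor products, so the $d$-dimensional $L_u$-quasinorms split as products of one-dimensional ones.
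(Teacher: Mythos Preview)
Your approach for \eqref{ct3} is more complicated than it needs to be and the anisotropic scaling claim is not right as stated. The paper simply rescales isotropically by $2^{-j}$: setting $g=\gf^{-1}\varphi_{\bar k}\gf f$ one has $g(2^{-j}\cdot)\in L_p^{\Gamma_0}(\R)$ with $\Gamma_0$ the fixed unit cube, and the multiplier becomes $\psi_j(2^{j}\cdot)=\psi_1(2\cdot)$ (for $j\ge 1$), a \emph{single fixed function}. Lemma~\ref{mul} then applies with a fixed constant. Your anisotropic dilation by $2^{\min(j,k_i)}$ does \emph{not} turn $\psi_j$ into a fixed bump: $\psi_j$ is a cube-ring, and after your dilation it becomes $\psi_0(2^{k_1-j}y_1,\dots)-\psi_0(2^{k_1-j+1}y_1,\dots)$, which still depends on all the differences $k_i-j$. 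One can show the relevant $L_u$-norm is uniformly bounded, but this needs a derivative-count argument you do not supply, and the whole detour is unnecessary given the one-line isotropic rescaling.

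For \eqref{ct4} there is a genuine error. Your ``standard estimate'' has the wrong sign: for a smooth bump at scale $\lambda$, i.e.\ $g(\cdot/\lambda)$, one has $\gf^{-1}[g(\cdot/\lambda)](y)=\lambda(\gf^{-1}g)(\lambda y)$ and hence $\|\gf^{-1}[g(\cdot/\lambda)]|L_u(\re)\|=\lambda^{1-1/u}$, not $\lambda^{1/u-1}$. Consequently your attribution of the loss to coordinates with $k_i>j$ is exactly backwards. In fact when $\supp\psi_j\cap\supp\varphi_{\bar k}\neq\emptyset$ one has $k_i\le j+1$ for all $i$, so your case $k_i>j$ is essentially vacuous; the growth factor $2^{(jd-|\bar k|)(1/u-1)}$ comes entirely from the coordinates with $k_i<j$. (Your combinatorial inequality $\sum_{i:k_i>j}(k_i-j)\le|\bar k|-jd$ is also reversed: since $k_i-j\le 0$ contributes nonpositively, the left side is $\ge|\bar k|-jd$.) The paper's argument avoids any case split: after isotropic rescaling by $2^{-j}$ the multiplier is $\varphi_{\bar k}(2^{j}\cdot)=\varphi_{\bar 1}(2^{-\bar k+\bar j+\bar 1}\cdot)$, and a direct homogeneity computation gives $\|\gf^{-1}[\varphi_{\bar k}(2^{j}\cdot)]|L_u(\R)\|=c\,2^{(jd-|\bar k|)(1/u-1)}$ in one step.
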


\begin{proof}
For $\bar{k}\in \N_0^d$ and $j \in \N_0$ we put
\beqq
\Omega_{\bar{k}}&=&\{x\in \R: |x_i|\leq 2^{k_i+1},\ i=1,...,d\},\\
\Gamma_j&=&\{x\in \R: \sup_{i=1,...,d}|x_i|\leq 2^{j+1}\}.
\eeqq
{\it Step 1.} Proof of \eqref{ct3}. For $f \in \cs'(\R)$ we put $g:=\gf ^{-1}\varphi_{\bar{k}}\gf f $. 
Then we have $g\in L_p^{\Omega_{\bar{k}}}(\R)$ and $g(2^{-j}\cdot)\in L_p^{\Gamma_0}(\R)$. Observe that
\begin{equation}
\begin{split}\nonumber
\|\gf ^{-1}\psi_j\gf g|L_p(\R) \| &=\, 2^{-\frac{jd}{p}}\| (\gf ^{-1}\psi_j\gf g)(2^{-j}\cdot)|L_p(\R)\|\\
&=\, 2^{-\frac{jd}{p}}\| \gf ^{-1}\big(\psi_j(2^{j}\cdot) \gf [g(2^{-j}\cdot)]\big)|L_p(\R)\|.
\end{split}
\end{equation}
Let $j \in \N$.
Lemma \ref{mul} together with $\supp\psi_{j}(2^{j}\cdot)\subset \Gamma_0$  yield
\begin{equation}
\begin{split}\nonumber
\|\gf ^{-1}\psi_j\gf g|L_p(\R) \|
&\leq\, c\,  2^{-\frac{jd}{p}} \| \gf ^{-1} (\psi_1 (2\, \cdot \, ))\, |L_{u}(\R)\|\cdot \|g(2^{-j}\cdot)|L_p(\R)\|\\
& \le\, C    \| \gf ^{-1} \psi_0\, |L_{u}(\R)\|\cdot   \|g|L_p(\R)\|.
\end{split}
\end{equation}
A similar argument yields the estimate of $\gf ^{-1}\psi_{0}\gf g $.
This proves \eqref{ct3}.\\
{\it Step 2.} To prove \eqref{ct4}, we put $h :=\gf ^{-1}\psi_j\gf f $. Then we have $h\in L_p^{\Gamma_j}(\R)$, hence  $h(2^{-j}\cdot) \in L_p^{\Gamma_0}(\R)$. In addition we know 
$\supp\varphi_{\bar{k}}(2^{j}\cdot )\subset \Gamma_0$. 
Let $\bar{k}=(k_1, \ldots\, k_d) $ such that $k_1, \ldots \, k_d \neq 0$. 
Using Lemma \ref{mul} we obtain
\begin{equation}\label{ws-15}
\begin{split}
\|\gf ^{-1}\varphi_{\bar{k}}\gf h|L_p(\R) \| &=\, 2^{-\frac{jd}{p}}\| (\gf ^{-1}\varphi_{\bar{k}}\gf h)(2^{-j}\cdot)|L_p(\R)\|\\
&=\, 2^{-\frac{jd}{p}}\| \gf ^{-1}\big[\varphi_{\bar{k}}(2^{j}\cdot) \gf [h(2^{-j}\cdot)]\big]|L_p(\R)\| \\
&\leq \, c\,  2^{-\frac{jd}{p}} \| \gf ^{-1}[\varphi_{\bar{k}}(2^{j}\cdot)]|L_{u}(\R)\|\cdot \|h(2^{-j}\cdot)|L_p(\R)\| \\
& \le  \, c\, \| \gf ^{-1}[\varphi_{\bar{k}}(2^{j}\cdot)]|L_{u}(\R)\|\cdot   \|h|L_p(\R)\|.
\end{split}
\end{equation}
We put $\bar{j}:= (j, \, \ldots \, , j)$. The homogeneity properties of the Fourier transform lead to
\begin{equation} 
\begin{split}\nonumber
\| \gf ^{-1}[\varphi_{\bar{k}}(2^{j}\cdot)]|L_{u}(\R)\| &=\, \| \gf ^{-1}[\varphi_{\bar{1}}(2^{-\bar{k}+\bar{j} + \bar{1}}\cdot)]|L_{u}(\R)\|\\
& \le \, c_1\, 2^{(jd-|\bar{k}|)(\frac{1}{u}-1)} \, \| \gf ^{-1}\varphi_{\bar{1}}|L_{u}(\R)\|.
\end{split}
\end{equation}
Inserting this into \eqref{ws-15} we get \eqref{ct4} for those $\bar{k}$. 
An obvious modification yields the estimate for the remaining $\bar{k}$.
The proof is complete.
\end{proof}


\subsection{Complex interpolation}


For the basics of  the classical complex interpolation method of Calder{\'o}n  we refer to the original paper \cite{Ca64} and 
the monographs \cite{BS,BL,lun,t78}. In the meanwhile it is well-known that the complex interpolation method extends to 
specific  quasi-Banach spaces, namely those, which are analytically convex, see \cite{kmm}. 
Note that any Banach space is analytically convex. 
The following Proposition, well-known in case of Banach spaces, see \cite[Theorem~4.1.2]{BL}, \cite[Theorem 2.1.6]{lun}
or \cite[Theorem~1.10.3.1]{t78}, can also be extended  to the quasi-Banach case, see \cite{kmm}.

\begin{proposition}\label{inter1}
Let $0 < \Theta < 1$.
Let $(X_1,Y_1)$ and $(X_2,Y_2)$ be two compatible couples of quasi-Banach spaces. In addition, let $X_1+X_2$, $Y_1+Y_2$ be analytically convex. 
If $T$ is in $\mathscr{L}(X_1,X_2)$ and in $\mathscr{L}(Y_1,Y_2)$, then the restriction of $T$ to $[X_1,Y_1]_{\Theta}$ is in 
$\mathscr{L}([X_1,Y_1]_{\Theta},[X_2,Y_2]_{\Theta})$ for every $\Theta$. Moreover, 
\beqq
\|T: [X_1,Y_1]_{\Theta}\to [X_2,Y_2]_{\Theta}\| \,\leq\, \|T:X_1\to X_2 \|^{1-\Theta}\, \|T:Y_1\to Y_2\|^{\Theta}.
\eeqq
\end{proposition}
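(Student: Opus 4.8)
The plan is to carry out Calder\'on's classical three-lines argument and to check that nothing in it uses completeness or the triangle inequality in an essential way beyond what analytic convexity already provides. Recall that $[X_1,Y_1]_\Theta$ is built from Calder\'on's space $\cg(X_1,Y_1)$ of all $(X_1+Y_1)$-valued functions $f$ that are bounded and continuous on the closed strip $\overline S=\{z\in\C:\ 0\le\Re z\le 1\}$, holomorphic in its interior, and for which $t\mapsto f(it)$ and $t\mapsto f(1+it)$ are bounded continuous with values in $X_1$ and $Y_1$, equipped with $\|f\|_{\cg}:=\max\{\sup_{t}\|f(it)|X_1\|,\ \sup_{t}\|f(1+it)|Y_1\|\}$; then $[X_1,Y_1]_\Theta=\{f(\Theta):f\in\cg(X_1,Y_1)\}$ carries the quotient quasi-norm, and likewise for $(X_2,Y_2)$. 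Under the analytic convexity hypotheses, \cite{kmm} supplies exactly the three facts that a quasi-Banach proof needs on top of the Banach one: $[X_j,Y_j]_\Theta$ are genuine quasi-Banach spaces, a maximum principle holds on $\overline S$, and consequently point evaluation at $\Theta$ maps $\cg$ boundedly into $X_j+Y_j$.

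First I would observe that $T$ extends to a bounded operator $X_1+Y_1\to X_2+Y_2$: for $z=x+y$ with $x\in X_1$, $y\in Y_1$ one has $\|Tz|X_2+Y_2\|\le\|Tx|X_2\|+\|Ty|Y_2\|\le\max(M_0,M_1)\,(\|x|X_1\|+\|y|Y_1\|)$ with $M_0:=\|T:X_1\to X_2\|$, $M_1:=\|T:Y_1\to Y_2\|$, and taking the infimum over the decompositions of $z$ gives the bound. Consequently, if $f\in\cg(X_1,Y_1)$, then $g:=T\circ f$ is bounded, continuous on $\overline S$ and holomorphic in its interior as an $(X_2+Y_2)$-valued function — linearity and continuity of $T$ preserve holomorphy — with $g(it)=Tf(it)\in X_2$, $g(1+it)=Tf(1+it)\in Y_2$ and the obvious boundedness and continuity along the two lines; hence $g\in\cg(X_2,Y_2)$. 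Now fix $a\in[X_1,Y_1]_\Theta$ and $\varepsilon>0$, pick $f\in\cg(X_1,Y_1)$ with $f(\Theta)=a$ and $\|f\|_{\cg}\le(1+\varepsilon)\,\|a|[X_1,Y_1]_\Theta\|$; then $g(\Theta)=Ta$, so $Ta\in[X_2,Y_2]_\Theta$, which already yields $T\in\mathscr{L}([X_1,Y_1]_\Theta,[X_2,Y_2]_\Theta)$.

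It remains to sharpen the norm estimate to the geometric mean, and here I would use the scaling invariance of $\cg$. For $\mu\in\re$ put $g_\mu(z):=e^{\mu(z-\Theta)}g(z)$; this again lies in $\cg(X_2,Y_2)$ with $g_\mu(\Theta)=Ta$, and since $|e^{\mu(z-\Theta)}|$ equals $e^{-\mu\Theta}$ on $\Re z=0$ and $e^{\mu(1-\Theta)}$ on $\Re z=1$, setting $A:=\sup_t\|g(it)|X_2\|\le M_0\|f\|_{\cg}$ and $B:=\sup_t\|g(1+it)|Y_2\|\le M_1\|f\|_{\cg}$ gives $\|Ta|[X_2,Y_2]_\Theta\|\le\|g_\mu\|_{\cg}=\max\{e^{-\mu\Theta}A,\ e^{\mu(1-\Theta)}B\}$. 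Choosing $\mu$ with $e^\mu=A/B$ (and disposing of the degenerate cases $A=0$ or $B=0$ by letting $\mu\to\pm\infty$) equalizes the two quantities at $A^{1-\Theta}B^\Theta\le M_0^{1-\Theta}M_1^\Theta\,\|f\|_{\cg}\le M_0^{1-\Theta}M_1^\Theta(1+\varepsilon)\,\|a|[X_1,Y_1]_\Theta\|$, and $\varepsilon\to0$ finishes the proof. The one real obstacle is therefore not the three-lines computation — which is verbatim that of \cite[Theorem~4.1.2]{BL} — but the preliminary structural facts about $[\,\cdot\,,\,\cdot\,]_\Theta$ in the quasi-Banach range, which I would simply quote from \cite{kmm}; everything else is elementary.
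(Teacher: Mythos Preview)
Your proof is correct and follows the classical Calder\'on three-lines argument, with the quasi-Banach structural issues delegated to \cite{kmm}, exactly as it should. Note, however, that the paper itself supplies no proof of this proposition: it is stated as a known fact, with the Banach case attributed to \cite[Theorem~4.1.2]{BL}, \cite[Theorem 2.1.6]{lun}, \cite[Theorem~1.10.3.1]{t78}, and the extension to analytically convex quasi-Banach spaces to \cite{kmm}. So there is nothing in the paper to compare against beyond the references, and your write-up is precisely a sketch of the argument found there. One small remark: the hypothesis in the paper reads ``$X_1+X_2$, $Y_1+Y_2$ analytically convex,'' which appears to be a typo for $X_1+Y_1$, $X_2+Y_2$ (the ambient spaces of the two interpolation couples); you have tacitly, and correctly, worked with the latter.
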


It is not difficult to see that all spaces $B^{t}_{p,q}(\R)$ 
are analytically convex, see \cite{MM} or  \cite{kmm}.
By means of Theorem 7.8 in \cite{kmm} and the wavelet characterization of $S^{t}_{p,q} B(\R)$, see \cite{Vybiral}, one can derive 
that also the spaces  $S^{t}_{p,q} B(\R)$ are analytically convex.

\begin{proposition}\label{inter2}
Let $t_i\in \mathbb{R}$, $0<p_i,q_i\leq \infty$, $i=1,2$, and 
\[
\min \Big(\max(p_1,q_1), \max(p_2,q_2)\Big)<\infty \, .
\]
If $t_0,p_0$ and $q_0$ are given by
\be\label{ws-102}
\frac{1}{p_0}=\frac{1-\Theta}{p_1}+\frac{\Theta}{p_2},\qquad \frac{1}{q_0}=\frac{1-\Theta}{q_1}+\frac{\Theta}{q_2},\qquad t_0=(1-\Theta)t_1+\Theta t_2.
\ee
Then
\beqq
B^{t_0}_{p_0,q_0}(\R)=[B^{t_1}_{p_1,q_1}(\R),\, B^{t_2}_{p_2,q_2}(\R)]_{\Theta}
\eeqq
and
\beqq
S^{t_0}_{p_0,q_0}B(\R)=[S^{t_1}_{p_1,q_1}B(\R),\, S^{t_2}_{p_2,q_2} B(\R)]_{\Theta}.
\eeqq
\end{proposition}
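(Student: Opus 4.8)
The plan is to deduce both interpolation identities from the standard fact that complex interpolation commutes with the retraction/coretraction induced by a smooth dyadic decomposition of unity, reducing the statement to the corresponding interpolation formula for vector-valued sequence spaces $\ell_q(L_p)$ (respectively the dominating mixed analogue indexed by $\bar k \in \N_0^d$). First I would recall the analytic-convexity facts already noted before the statement: all the spaces $B^{t}_{p,q}(\R)$ and $S^{t}_{p,q}B(\R)$ are analytically convex, and sums of two such spaces are again analytically convex (this follows from the wavelet model and \cite[Theorem 7.8]{kmm}), so that Calder\'on's method is available and Proposition \ref{inter1} applies. Then I would set up the coretraction $S f := (\gf^{-1}[\psi_j \gf f])_{j\ge 0}$ mapping $B^{t}_{p,q}(\R)$ isomorphically onto a complemented subspace of the weighted space $\ell_q^t(L_p) := \{(g_j): (2^{jt}\|g_j|L_p\|)_j \in \ell_q\}$, with retraction $R(g_j)_j := \sum_j \gf^{-1}[\tilde\psi_j \gf g_j]$, where $\tilde\psi_j$ is a slightly fattened version of $\psi_j$ so that $\tilde\psi_j \psi_j = \psi_j$; the identity $RS = \mathrm{id}$ on $B^t_{p,q}(\R)$ and the boundedness of $R$, $S$ between the relevant pairs are exactly the Fourier-multiplier estimates of the type collected in Lemma \ref{mul} (applied with $\Omega, \Gamma$ the relevant dyadic annuli). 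The completely analogous construction with the tensorized decomposition $(\varphi_{\bar k})_{\bar k \in \N_0^d}$ and the weight $2^{|\bar k|t}$ handles $S^t_{p,q}B(\R)$.

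With the retraction in place, the standard interpolation-of-retracts lemma (Triebel \cite[Theorem 1.2.4]{t78}, valid verbatim in the analytically convex quasi-Banach setting by \cite{kmm}) gives
\[
[B^{t_1}_{p_1,q_1}(\R), B^{t_2}_{p_2,q_2}(\R)]_\Theta = S^{-1}\big([\ell_{q_1}^{t_1}(L_{p_1}), \ell_{q_2}^{t_2}(L_{p_2})]_\Theta\big),
\]
and likewise for the mixed case, so everything is reduced to the interpolation of the weighted vector-valued sequence spaces. For that I would invoke the known formula
\[
[\ell_{q_1}^{t_1}(L_{p_1}), \ell_{q_2}^{t_2}(L_{p_2})]_\Theta = \ell_{q_0}^{t_0}(L_{p_0})
\]
with $t_0, p_0, q_0$ as in \eqref{ws-102}; this is the combination of Calder\'on's formula for $[L_{p_1}, L_{p_2}]_\Theta = L_{p_0}$, its $\ell_q$-vector-valued version, and the elementary behaviour of the weight $2^{jt}$ under interpolation. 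Here the hypothesis $\min(\max(p_1,q_1),\max(p_2,q_2)) < \infty$ enters: it guarantees that at least one of the two sequence spaces has absolutely continuous quasi-norm, which is precisely the condition under which the complex interpolation formula for $\ell_q(L_p)$-type spaces holds without the $\ell_q(L_p)\cap c_0$ correction — I would cite the relevant result (e.g. from \cite{BL} in the Banach range, \cite{kmm} in general) rather than reprove it.

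The main obstacle, and the point that needs the most care, is the quasi-Banach bookkeeping: verifying that the ambient sums $B^{t_1}_{p_1,q_1}(\R)+B^{t_2}_{p_2,q_2}(\R)$ and the corresponding sums of sequence spaces are analytically convex (so that Calder\'on's method and the retraction lemma are legitimate), and checking that the multiplier estimates defining $R$ and $S$ are uniform enough to produce genuine bounded operators between the two endpoint pairs simultaneously. Once analytic convexity is secured and the retraction is shown to be admissible, the rest is a concatenation of cited results; the smoothness weight $2^{jt}$ contributes nothing beyond the trivial relation $2^{jt_0} = (2^{jt_1})^{1-\Theta}(2^{jt_2})^{\Theta}$, and the dominating mixed case differs from the isotropic one only in replacing the index set $\N_0$ by $\N_0^d$ and $jt$ by $|\bar k|\, t$, with no change in the argument.
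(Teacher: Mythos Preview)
Your proposal is correct and follows the same high-level strategy that the paper indicates in the remark immediately following the proposition (which is all the justification the paper gives): reduce to complex interpolation of the associated weighted sequence spaces and lift back via an isomorphism, with the quasi-Banach case handled through the analytic-convexity/Calder\'on-product machinery of \cite{kmm}. The only cosmetic difference is that the paper points to wavelet isomorphisms (Vybiral \cite[Theorems~2.12 and~4.6]{Vybiral}) to pass between function spaces and sequence spaces, whereas you implement the same reduction via a Fourier-analytic retraction/coretraction $R,S$ built from the decompositions $(\psi_j)_j$ respectively $(\varphi_{\bar k})_{\bar k}$; both are standard realizations of the retract method and lead to the identical sequence-space problem.
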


\begin{remark}
 \rm 
Complex interpolation of isotropic Besov spaces has been  studied at various places, we refer to   \cite[Theorem~6.4.5]{BL} and 
\cite[2.4.1]{t78} as well as to the references given there. The extension to the quasi-Banach case has been done by Mendez and Mitrea \cite{MM}, 
see also  \cite{kmm}. 
Vybiral \cite[Theorem 4.6]{Vybiral} has proved a corresponding result for sequence spaces associated to Besov spaces  of dominating mixed smoothness. 
However, these results can be shifted to the level of function spaces by suitable  wavelet isomorphisms, see \cite[Theorem 2.12]{Vybiral}.
Here we would like to mention that all these extensions of the complex method to the quasi-Banach case in the particular situation of  Besov spaces in  
are based on investigations about corresponding Calder{\'o}n products, an idea, which goes back to the fundamental paper 
of Frazier and Jawerth \cite{fj90} (where Triebel-Lizorkin spaces are treated). 
\end{remark}

Later on we shall need also complex interpolation for some of the remaining cases not covered by Proposition \ref{inter2}. 
Let $X$ be quasi-Banach space  of distributions. By $\accentset{\diamond}{X}$ we denote the closure in $X$ of the set of all infinitely
differentiable functions $f$ such that
$D^\alpha f \in X$ for all $\alpha \in \N_0^d$.

\begin{proposition}\label{inter3}
Let $t_i\in \mathbb{R}$, $\Theta\in (0,1)$ and  $0<q_i\leq \infty$, $i=1,2$. 
If $t_0$ and $q_0$ are defined  as in \eqref{ws-102},
then
\be\label{ws-103}
\accentset{\diamond}{B}^{t_0}_{\infty,q_0}(\R)=[B^{t_1}_{\infty,q_1}(\R),\, B^{t_2}_{\infty,q_2}(\R)]_{\Theta}
\ee
and
\be\label{ws-104}
\accentset{\diamond}{S}^{t_0}_{\infty,q_0}B(\R)=[S^{t_1}_{\infty,q_1}B(\R),\, S^{t_2}_{\infty,q_2} B(\R)]_{\Theta},
\ee
\end{proposition}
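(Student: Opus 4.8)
The plan is to reduce both identities in \eqref{ws-103} and \eqref{ws-104} to a single interpolation statement for weighted, $L_\infty$-valued sequence spaces, to settle that statement by an explicit analytic-family argument, and then to transport the result back with the Fourier-analytic tools of Lemma \ref{mul3}. Throughout I would write $t(z):=(1-z)t_1+zt_2$ on the closed strip, so $t(\Theta)=t_0$ while $\mathrm{Re}\,t(iy)=t_1$ and $\mathrm{Re}\,t(1+iy)=t_2$. For $s\in\re$, $0<q\le\infty$ let $\ell_q(L_\infty,2^{js})$ be the space of sequences $(g_j)_{j\ge 0}\subset L_\infty(\R)$ with $\|(g_j)\,|\,\ell_q(L_\infty,2^{js})\|:=\|(2^{js}g_j)_j\,|\,\ell_q(L_\infty)\|<\infty$. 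With $(\psi_j)$ from \eqref{unity1} and $\tilde\psi_j:=\psi_{j-1}+\psi_j+\psi_{j+1}$ (so $\tilde\psi_j\psi_j=\psi_j$), the maps $Sf:=(\gf^{-1}[\psi_j\gf f])_j$ and $R(g_j)_j:=\sum_j\gf^{-1}[\tilde\psi_j\gf g_j]$ form a coretraction/retraction pair between $B^s_{\infty,q}(\R)$ and $\ell_q(L_\infty,2^{js})$, with $R\circ S=\mathrm{id}$; boundedness is Lemma \ref{mul} applied exactly as in the proof of Lemma \ref{mul3}, uniformly in $s$. Since $B^s_{\infty,q}(\R)$ and $\ell_q(L_\infty,2^{js})$ are analytically convex (for the sequence spaces as in \cite{kmm}), Proposition \ref{inter1} applied to $S$ and to $R$ gives
\[
[B^{t_1}_{\infty,q_1}(\R),\,B^{t_2}_{\infty,q_2}(\R)]_\Theta=R\Big(\big[\ell_{q_1}(L_\infty,2^{jt_1}),\,\ell_{q_2}(L_\infty,2^{jt_2})\big]_\Theta\Big),
\]
with $S$ embedding the left-hand space onto a complemented subspace of the sequence interpolation space.

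The core step is to show $\big[\ell_{q_1}(L_\infty,2^{jt_1}),\ell_{q_2}(L_\infty,2^{jt_2})\big]_\Theta=\accentset{\diamond}{\ell}_{q_0}(L_\infty,2^{jt_0})$, where the right-hand space is the closed subspace of $\ell_{q_0}(L_\infty,2^{jt_0})$ obtained by additionally demanding $2^{jt_0}\|g_j\,|\,L_\infty\|\to0$ when $q_0=\infty$ (and equal to all of $\ell_{q_0}(L_\infty,2^{jt_0})$ when $q_0<\infty$). The inclusion into $\ell_{q_0}(L_\infty,2^{jt_0})$ is the norm inequality of Proposition \ref{inter1} for the identity. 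To land in the smaller space I would use that $\ell_{q_1}(L_\infty,2^{jt_1})\cap\ell_{q_2}(L_\infty,2^{jt_2})$ is dense in the complex interpolation space (a general property, see \cite{BL}) and lies inside $\accentset{\diamond}{\ell}_{q_0}(L_\infty,2^{jt_0})$: only $q_0=\infty$ needs a word, and then $q_1=q_2=\infty$, forcing $t_1\ne t_2$, say $t_1<t_2$, whence $2^{jt_0}\|g_j\|_\infty=2^{j(t_0-t_2)}\,2^{jt_2}\|g_j\|_\infty\to0$. For the reverse inclusion it suffices to treat finitely supported sequences $(g_j)_{j\le M}$, which are dense in $\accentset{\diamond}{\ell}_{q_0}(L_\infty,2^{jt_0})$; the family $F(z):=(2^{j(t_0-t(z))}g_j)_{j\le M}$ is analytic and bounded on the strip, $F(\Theta)=(g_j)$, and on the two boundary lines $\|F(iy)\,|\,\ell_{q_1}(L_\infty,2^{jt_1})\|=\|F(1+iy)\,|\,\ell_{q_2}(L_\infty,2^{jt_2})\|=\|(g_j)\,|\,\ell_{q_0}(L_\infty,2^{jt_0})\|$, which yields the required bound; density then closes the argument.

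It then remains to identify $R\big(\accentset{\diamond}{\ell}_{q_0}(L_\infty,2^{jt_0})\big)$ with $\accentset{\diamond}{B}^{t_0}_{\infty,q_0}(\R)$. First, for finitely supported $(g_j)_{j\le M}$ the function $R(g_j)=\sum_{j\le M}\gf^{-1}[\tilde\psi_j\gf g_j]$ has compactly supported Fourier transform, hence is $C^\infty$ with all derivatives band-limited $L_\infty$-functions, so they lie in every $B^s_{\infty,q}(\R)$; thus $R(g_j)$ is a generator of $\accentset{\diamond}{B}^{t_0}_{\infty,q_0}(\R)$, and passing to closures (continuity of $R$, density of finitely supported sequences) gives $R\big(\accentset{\diamond}{\ell}_{q_0}(L_\infty,2^{jt_0})\big)\hookrightarrow\accentset{\diamond}{B}^{t_0}_{\infty,q_0}(\R)$. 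Conversely, if $f\in C^\infty(\R)$ with $D^\alpha f\in B^{t_0}_{\infty,q_0}(\R)$ for all $\alpha$, then $Sf\in\accentset{\diamond}{\ell}_{q_0}(L_\infty,2^{jt_0})$: clear for $q_0<\infty$, and for $q_0=\infty$ by the Bernstein-type bound $2^{jt_0}\|\gf^{-1}[\psi_j\gf f]\|_\infty\lesssim 2^{-jm}\sum_{|\alpha|\le m}\|D^\alpha f\,|\,B^{t_0}_{\infty,\infty}(\R)\|$ for any fixed $m\ge1$ (on $\supp\psi_j$ the multiplier $2^{jm}(1+|\xi|^2)^{-m/2}$ is a uniformly bounded Fourier multiplier, cf. Lemma \ref{mul}). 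Since $R\circ S=\mathrm{id}$, this gives $\accentset{\diamond}{B}^{t_0}_{\infty,q_0}(\R)\subseteq R\big(\accentset{\diamond}{\ell}_{q_0}(L_\infty,2^{jt_0})\big)$, and \eqref{ws-103} follows. The proof of \eqref{ws-104} is word for word the same, with $\N_0^d$ for $\N_0$, $|\bar k|$ for $j$, $\varphi_{\bar k}$ for $\psi_j$, and $\tilde\varphi_{\bar k}:=\prod_{i=1}^d(\varphi_{k_i-1}+\varphi_{k_i}+\varphi_{k_i+1})$ for $\tilde\psi_j$; the multiplier estimates are those already recorded in Lemma \ref{mul3}, and the analytic convexity of $S^s_{\infty,q}B(\R)$ was noted before Proposition \ref{inter2}.

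The hard part will be the case $q_1=q_2=\infty$, which is the only one where the complex method yields a proper closed subspace of $B^{t_0}_{\infty,\infty}(\R)$: one must pin down this subspace exactly (the ``$\accentset{\diamond}{\ell}$'' computation above) and then match it through the retraction with the analytic definition of $\accentset{\diamond}{}$, the single genuinely new ingredient being the Bernstein estimate showing that ``all derivatives in $B^{t_0}_{\infty,\infty}$'' forces $2^{jt_0}\|\gf^{-1}[\psi_j\gf f]\|_\infty\to0$. One should also observe that in that case the statement requires $t_1\ne t_2$ — otherwise the method returns the full space $B^{t_1}_{\infty,\infty}(\R)\ne\accentset{\diamond}{B}^{t_1}_{\infty,\infty}(\R)$ — which is implicit in the formulation; for every other admissible choice of the $q_i$ one has $q_0<\infty$ and hence $\accentset{\diamond}{B}^{t_0}_{\infty,q_0}(\R)=B^{t_0}_{\infty,q_0}(\R)$, so there the proposition is just the expected ``clean'' interpolation formula.
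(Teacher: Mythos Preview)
The paper's own ``proof'' is a single-line citation to \cite{ysy} for \eqref{ws-103} together with the remark that \eqref{ws-104} is analogous, so your retraction/coretraction reduction to weighted $L_\infty$-valued sequence spaces is not merely a different route but an actual argument where the paper gives none; the overall strategy --- identify $B^{s}_{\infty,q}(\R)$ as a retract of $\ell_q(L_\infty,2^{js})$, interpolate the sequence spaces, and recognise the diamond closure --- is exactly the one underlying the cited reference, and your treatment of the delicate case $q_1=q_2=\infty$ (including the correct caveat that one must have $t_1\neq t_2$, which is harmless for the application in Substep~2.4) is fine.

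There is, however, a genuine gap in your analytic family when $q_1\neq q_2$. For $F(z)=(2^{\,j(t_0-t(z))}g_j)_{j\le M}$ one has, on the line $\mathrm{Re}\,z=0$,
\[
\big\|F(iy)\,\big|\,\ell_{q_1}(L_\infty,2^{jt_1})\big\|
=\Big(\sum_{j\le M}\big(2^{jt_0}\|g_j\,|\,L_\infty(\R)\|\big)^{q_1}\Big)^{1/q_1},
\]
which is the $\ell_{q_1}$-norm of $(2^{jt_0}\|g_j\|_\infty)_j$, not the $\ell_{q_0}$-norm you claim; the analogous thing happens on the other boundary, and the three-lines bound you obtain is $\|a\,|\,\ell_{q_1}\|^{1-\Theta}\|a\,|\,\ell_{q_2}\|^{\Theta}\ge\|a\,|\,\ell_{q_0}\|$ by log-convexity, so the inequality goes the wrong way and density in $M$ does not rescue it. The standard repair is to interpolate the $q$-index as well: with $\alpha_j:=2^{jt_0}\|g_j\,|\,L_\infty(\R)\|$, $u_j:=g_j/\|g_j\,|\,L_\infty(\R)\|$ and $\tfrac{1}{q(z)}:=\tfrac{1-z}{q_1}+\tfrac{z}{q_2}$, take
\[
F(z)_j:=2^{-jt(z)}\,\alpha_j^{\,q_0/q(z)}\,u_j\qquad(j\le M),
\]
so that $F(\Theta)_j=g_j$ and $\big\|F(k+iy)\,|\,\ell_{q_{k+1}}(L_\infty,2^{jt_{k+1}})\big\|=\|(g_j)\,|\,\ell_{q_0}(L_\infty,2^{jt_0})\|^{\,q_0/q_{k+1}}$ for $k=0,1$; the three-lines estimate then gives exactly the desired bound (cf.\ \cite[Chapter~5]{BL}). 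With this correction your proof goes through; the Bernstein step identifying $\accentset{\diamond}{B}^{t_0}_{\infty,\infty}(\R)$ via $Sf\in\accentset{\diamond}{\ell}_\infty$ is correct in spirit and only needs the multiplier $\psi_j(\xi)(1+|\xi|^2)^{-m}$ (uniformly in $L_1$ after rescaling, by Lemma~\ref{mul}) to be written out cleanly.
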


\begin{proof}
The formula \eqref{ws-103} is proved in \cite{ysy}. 
Concerning \eqref{ws-104} one can argue in a similar way.
\end{proof}


\subsection{Dual spaces}


Next we will recall some results  about the dual spaces of $B^t_{p,q}(\R)$ and $S^t_{p,q}B(\R)$. 
For $1<p<\infty$ the conjugate exponent $p'$ is determined by $\frac{1}{p}+\frac{1}{p'}=1$.  
If $0<p\leq 1$ we put $p'=\infty$ and if $p=\infty$ we put $p'=1$.
For us it will be convenient to switch to the closure of $\cs(\R)$ in these  spaces.

\begin{definition}
 {\rm (i)} By $\mathring{B}^t_{p,q}(\R)$ we denote the closure of $\cs(\R)$ in $B^t_{p,q}(\R)$. \\
 {\rm (ii)} By $\mathring{S}^t_{p,q}B(\R)$ we denote the closure of $\cs(\R)$ in $S^t_{p,q}B(\R)$.  
\end{definition}

Recall that 
\[
\mathring{B}^t_{p,q}(\R)  = {B}^t_{p,q}(\R) \qquad \Longleftrightarrow \qquad \max (p,q) < \infty
\]
and 
\[
\mathring{S}^t_{p,q}B(\R)  = {S}^t_{p,q}B(\R) \qquad \Longleftrightarrow \qquad \max (p,q) < \infty\, .
\]
Because of the density of $\cs(\R)$ in these spaces any element of the dual space can be interpreted as an element of $\cs' (\R)$.
Hence, a distribution  $f\in \cs'(\R)$ belongs to the dual space $(\mathring{B}^t_{p,q}(\R))'$  
 if and only if there exists a positive constant $c$ such that
\beqq
|f(\varphi)|\leq c \, \|\varphi|B^t_{p,q}(\R)\|  \qquad \text{holds for all}\ \  \varphi \in \cs(\R).
\eeqq
Similarly for $\mathring{S}^t_{p,q}B(\R)$.

\begin{proposition}\label{dual1}
Let $t\in \re$.
\\
{\rm (i)} If $1\leq p < \infty$ and $0<q\le \infty$, then it holds
\beqq
[\mathring{B}^t_{p,q}(\R)]'=B^{-t}_{p',q'}(\R)\qquad \text{and}\qquad [\mathring{S}^t_{p,q}B(\R)]'=S^{-t}_{p',q'}B(\R).
\eeqq
{\rm (ii)} If $0<p<1$ and $0<q\le \infty$, then
\beqq
[\mathring{B}^t_{p,q}(\R)]'=B^{-t+d(\frac{1}{p}-1)}_{\infty,q'}(\R)\qquad \text{and}\qquad[\mathring{S}^t_{p,q}B(\R)]'=S^{-t+\frac{1}{p}-1}_{\infty,q'}B(\R)
\eeqq
\end{proposition}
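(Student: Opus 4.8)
The plan is to derive the duality statements for Besov spaces of dominating mixed smoothness in complete parallel to the classical isotropic case, the key point being that both scales admit wavelet isomorphisms onto sequence spaces for which the duality is an elementary computation. The isotropic assertions in (i) and (ii) are classical (see \cite{Tr83}, \cite{t78}); the new content is the dominating-mixed part, so I would concentrate on that and merely cite the literature for the isotropic half.

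First I would recall the wavelet isomorphism for $S^t_{p,q}B(\R)$ from \cite{Vybiral}: a suitably smooth compactly supported Daubechies-type wavelet system $(\psi_{\bar j,\bar m})$ indexed by dyadic levels $\bar j \in \N_0^d$ and shifts $\bar m \in \zz^d$ gives a bounded linear bijection between $S^t_{p,q}B(\R)$ and the sequence space $s^t_{p,q}b$ consisting of coefficient arrays $(\lambda_{\bar j,\bar m})$ with
\beqq
\| \lambda \, | s^t_{p,q}b\| = \bigg(\sum_{\bar j \in \N_0^d} 2^{|\bar j|(t - 1/p)q}\, 2^{|\bar j|q/2}\Big(\sum_{\bar m\in\zz^d}|\lambda_{\bar j,\bar m}|^p\Big)^{q/p}\bigg)^{1/q}
\eeqq
(with the usual modifications), where the normalization of the smoothness shift is exactly what produces the term $-t + \frac1p - 1$ below. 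Restricting this isomorphism to the closure of $\cs(\R)$ identifies $\mathring S^t_{p,q}B(\R)$ with the corresponding subspace of $s^t_{p,q}b$ on which finitely supported sequences are dense; when $\max(p,q)<\infty$ this is all of $s^t_{p,q}b$.

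Next I would compute the dual of the sequence space. Since $s^t_{p,q}b$ is, up to the diagonal weight $w_{\bar j} = 2^{|\bar j|(t-1/p+1/2)}$, an $\ell_q(\ell_p)$-type space over the index set $\{(\bar j,\bar m)\}$, its dual (taken as the dual of the closure of finitely supported sequences) is the weighted $\ell_{q'}(\ell_{p'})$-space, with the reciprocal weights, by the standard duality of vector-valued sequence spaces — here one uses $\ell_p' = \ell_{p'}$ for $1\le p<\infty$, and for $0<p<1$ the fact that $\ell_p' = \ell_\infty$ isometrically. Tracking the weights: the dual weight is $2^{-|\bar j|(t-1/p+1/2)}$, and rewriting this as $2^{|\bar j|(s - 1/r + 1/2)}$ with $r=p'$ forces, in the case $0<p<1$ (so $r=\infty$, $1/r=0$), the identity $s = -t + \frac1p - 1$; in the case $1\le p<\infty$ it gives simply $s=-t$, $r=p'$, $q$-index $q'$. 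Transporting this back through the wavelet isomorphism (which identifies duals since it is an isomorphism onto a space in which the test sequences are dense) yields $[\mathring S^t_{p,q}B(\R)]' = S^{-t}_{p',q'}B(\R)$ for $1\le p<\infty$ and $[\mathring S^t_{p,q}B(\R)]' = S^{-t+\frac1p-1}_{\infty,q'}B(\R)$ for $0<p<1$; one checks that the pairing induced on the sequence side is the extension of the $\cs$--$\cs'$ pairing, so the identification is the natural one.

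I expect the main obstacle to be the bookkeeping around the case $q=\infty$ together with the non-locally-convex case $0<p<1$ or $0<q<1$: one must be careful that "dual space" means the dual of the closure $\mathring S^t_{p,q}B(\R)$ of $\cs(\R)$ — equivalently the dual of the closure of finitely supported sequences in $s^t_{p,q}b$ — rather than the dual of the full space, since for $\max(p,q)=\infty$ these differ, and the duality $\ell_q' = \ell_{q'}$ (resp. $\ell_p'=\ell_{p'}$) at the endpoint is only correct on the closure of the finitely supported sequences. Once this is set up correctly, the computation is routine and entirely parallel to the isotropic argument; indeed the isotropic case (i)--(ii) is obtained by the same scheme using the isotropic wavelet isomorphism, with the one-dimensional level index $j$ replaced for $|\bar j|$ and the exponent $\frac1p-1$ replaced by $d(\frac1p-1)$, which accounts for the factor $d$ appearing in the isotropic formula of part (ii).
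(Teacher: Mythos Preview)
Your approach is correct but genuinely different from the paper's. The paper cites the literature for the isotropic case and for the dominating-mixed case with $0<p,q<\infty$ (Hansen's thesis), and then gives an explicit argument only for the endpoint $q=\infty$. That argument is Fourier-analytic and stays entirely at the level of the Littlewood--Paley pieces $\gf^{-1}\varphi_{\bar k}\gf f$: for one inclusion it estimates $|f(\psi)|$ directly via H\"older on the $\ell_q(L_p)$ scale; for the other it realises $\mathring S^t_{p,\infty}B(\R)$ isometrically as a closed subspace of $c_0(L_p)$, invokes Hahn--Banach together with $[c_0(L_p)]'=\ell_1(L_{p'})$, and then reads off the Besov norm of the resulting representative. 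The case $0<p<1$, $q=\infty$ is handled by a separate test-function argument (pointwise evaluation of $\gf^{-1}\varphi_{\bar k}\gf f$ at near-maximisers) to identify the $L_\infty$ dual.

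Your wavelet route trades that hands-on analysis for the single black box of Vyb\'{\i}ral's isomorphism and then does the whole thing uniformly as weighted $\ell_q(\ell_p)$ duality; this is cleaner and treats all $(p,q)$ at once, whereas the paper's method is more self-contained and shows exactly where the shift $\frac1p-1$ comes from (the $L_p$--$L_\infty$ scaling of $\gf^{-1}\varphi_{\bar k}$). The one point you should not leave implicit is the identification of $\mathring S^t_{p,q}B(\R)$ with the closure of finitely supported sequences: the image of $\cs(\R)$ under the coefficient map is not the finitely supported sequences themselves, so you need a line explaining why finite wavelet sums (which are in $C_0^\infty(\R)\subset\cs(\R)$) are dense in $\mathring S^t_{p,q}B(\R)$, and conversely why the coefficients of a Schwartz function can be truncated in the $s^t_{p,q}b$ norm. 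Once that is said, your weight bookkeeping is correct and the argument goes through.
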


\begin{proof}
The proof in the isotropic case can be found in \cite[Section 2.11]{Tr83}, see in particular Remark 2.11.2/2. 
For the  dominating mixed smoothness  we refer to \cite[Subsection 2.3.8]{Hansen}, at least if  $0<p,q<\infty$.
Here  we only  give a proof in case $q=\infty$ for the Besov spaces of 
dominating mixed smoothness following essentially the arguments given in \cite[2.5.1]{Tr78} for the isotropic case.
\\
{\it Step 1.} We shall prove $[\mathring{S}^t_{p,\infty}B(\R)]'=S^{-t}_{p',1}B(\R)$. \\
{\it Substep 1.1.}  Let $(\varphi_{j})_{j\in \N_0}$ be the univariate smooth dyadic decomposition of unity used in the definition of the spaces. 
We put 
\[\tilde{\varphi}_{j}:=\varphi_{j-1}+\varphi_j+\varphi_{j+1}\, , \qquad j=0,1, \ldots \, , 
   \]
 with $\varphi_{-1}\equiv 0$. For $\bar{k}\in \N_0^d$ we define $\tilde{\varphi}_{\bar{k}}:=\tilde{\varphi}_{k_1} \, \otimes \, \cdots \, \otimes \, \tilde{\varphi}_{k_d}$. 
With  $f\in S^{-t}_{p',1}B(\R)$ and  $\psi\in S(\R)$ we have
\begin{equation}\nonumber
\begin{split}
|f(\psi)|&=\,\Big|\sum_{\bar{k}\in \N_0^d}(\gf^{-1}\varphi_{\bar{k}}\gf f)(\psi)\Big|=\Big|\sum_{\bar{k}\in \N_0^d}(\gf^{-1}\tilde{\varphi}_{\bar{k}}\gf \gf^{-1}\varphi_{\bar{k}}\gf f)(\psi)\Big|\\
&=\,\Big|\sum_{\bar{k}\in \N_0^d}( \gf^{-1}\varphi_{\bar{k}}\gf f)(\gf^{-1}\tilde{\varphi}_{\bar{k}}\gf\psi)\Big|\\
&\leq \,\|2^{-|\bar{k}|t} \gf^{-1}\varphi_{\bar{k}}\gf f|\ell_1(L_{p'})\|\cdot \|2^{|\bar{k}|t} \gf^{-1}\tilde{\varphi}_{\bar{k}}\gf\psi|\ell_{\infty}(L_p)|.
\end{split}
\end{equation}
Observe that
\begin{equation}\nonumber
\begin{split}
 \|2^{|\bar{k}|t} \gf^{-1}\tilde{\varphi}_{\bar{k}}\gf\psi|\ell_{\infty}(L_p)\| 
 &\leq\, \sum_{ \| \bar{\ell}\|_\infty \le 1} \| 2^{|\bar{k}|t}\gf \varphi_{\bar{k}+\bar{\ell}}\gf^{-1}\psi |\ell_{\infty}(L_p)\|\\
 &\leq\,  c\, \| \psi | S^{t}_{p,\infty}B(\R)\|
\end{split}
\end{equation}
for some $c>0$ independent of $\psi$ and $f$. Consequently 
\beqq
|f(\psi)|\leq c\,  \| f|S^{-t}_{p',1}B(\R)\| \cdot \| \psi| S^{t}_{p,\infty}B(\R)\|
\eeqq
which means $ f\in [\mathring{S}^{t}_{p,\infty}B(\R)]'$.
\\
{\it Substep 1.2.} Next we prove the reverse direction. 
We assume that the generator of our smooth dyadic decomposition of unity is an even function.
Then
$\varphi_{\bar{k}} (-x) = \varphi_{\bar{k}} (x)$ follows for all $x\in \R$ an all $\bar{k} \in \N_0^d$.
\\
Let $c_0(L_p)$ denote the space of all sequences $(\psi_{\bar{k}})_{\bar{k}}$ of measurable functions such that
\[
\lim_{|\bar{k}| \to \infty} \, \|\, \psi_{\bar{k}}\, |L_p(\R) \| = 0
\]
equipped with the norm
\[
 \| \, (\psi_{\bar{k}})_{\bar{k}}\, |c_0(L_p)\| := \sup_{\bar{k} \in \N_0^d}\, \|\, \psi_{\bar{k}}\, |L_p(\R) \|\, .
\]
Observe that  
\[
 J:  ~ g \mapsto (2^{|\bar{k}|t}\gf^{-1}\varphi_{\bar{k}}\gf g)_{\bar{k}\in \N_0^d}
\]
 is  isometric and bijective if $J$ is considered as a mapping 
from $\mathring{S}^t_{p,\infty}B(\R)$ onto a closed subspace $Y$ of $c_0(L_p)$. 
Here we use the fact that 
\[
\lim_{|\bar{k}| \to \infty} \, \|\, 2^{|\bar{k}|t}\, \gf^{-1}\varphi_{\bar{k}}\gf g \, |L_p(\R) \| = 0
\] 
holds for all $g \in \mathring{S}^{t}_{p,\infty}B(\R)$.
\\
Let $f\in [\mathring{S}^{t}_{p,\infty}B(\R)]'$.
Hence, by defining
\[
\tilde{f} \Big((\psi_{\bar{k}})_{\bar{k}}\Big):= f\Big(\sum_{\bar{k} \in \N_0^d} 2^{-|\bar{k}|t}\, \psi_{\bar{k}}\Big) \, , \qquad (\psi_{\bar{k}})_{\bar{k}} \in Y\, , 
\]
$\tilde{f}$ becomes  a linear and continuous  functional on $Y$ satisfying $\| \, \tilde{f} \, | Y \to \C\| = \| \, f \, |\, [\mathring{S}^{t}_{p,\infty}B(\R)]'\| $. 
Now, by the Hahn-Banach theorem, there exists a linear and continuous  extension of $\tilde{f}$ 
to a continuous linear functional on the space $c_0(L_p)$. 
It is  known that $[c_0(L_p)]'=\ell_1(L_{p'})$ and any $g \in [c_0(L_p)]'$ can be represented
in the form
\be\label{dua-1}
g ((\psi_{\bar{k}})_{\bar{k}}) = \sum_{\bar{k}\in \N_0^d}\int_{ \R} g_{\bar{k}}(x) \, \psi_{\bar{k}}(x)\, dx\, , 
\qquad (\psi_{\bar{k}})_{\bar{k}}
\in c_0(L_p)\, ,
\ee
where the functions $g_{\bar{k}}$ satisfy
\[
\| \, g \, | \ell_1 (L_{p'})\| = \sum_{\bar{k}\in \N_0^d} \| g_{\bar{k}}\, |L_{p'}(\R)\| < \infty \, ,
\]
see \cite[Lemma 1.11.1]{t78}.
Applying this with $g= \tilde{f}$ we find
\be\label{dua-2}
\| \, (f_{\bar{k}})_{\bar{k}}\, |\ell_1(L_{p'})\| = 
\| \, \tilde{f} \, | [c_0(L_p)]'\| = \| \, f \, |\, [\mathring{S}^{t}_{p,\infty}B(\R)]'\| 
\ee 
for an appropriate sequence $ (f_{\bar{k}})_{\bar{k}}$.
In view of \eqref{dua-1}, the definition of $\tilde{f}$, the Plancherel identity and the symmetry condition with respect to $(\varphi_{\bar{k}})$ we obtain 
\begin{equation}
\begin{split}\nonumber
f(\psi)  = &\, f \Big(\sum_{\bar{k}\in \N_0^d} \,  \gf^{-1}\varphi_{\bar{k}}\gf \psi \Big) =  \tilde{f} \Big((2^{|\bar{k}|t}\, \gf^{-1}\varphi_{\bar{k}}\gf \psi)_{\bar{k}} \Big)
\\
=&\,
\sum_{\bar{k}\in \N_0^d} 2^{|\bar{k}|t}\, \int f_{\bar{k}}(x)\, (\gf^{-1}\varphi_{\bar{k}}\gf  \psi) (x)\, dx\\
 =&\,
\sum_{\bar{k}\in \N_0^d} 2^{|\bar{k}|t}\, \int \psi (x)\, (\gf^{-1}\varphi_{\bar{k}}\gf   f_{\bar{k}}) (x)\, dx
\\
=&\, \sum_{\bar{k}\in \N_0^d} \, 2^{|\bar{k}|t}\, \Big(\gf^{-1}\varphi_{\bar{k}}\gf   f_{\bar{k}}\Big) (\psi)
\end{split}
\end{equation}
for any $\psi \in \cs (\R)$. 
This leads to the identity 
\beqq
\varphi_{\bar{k}}\gf f= \sum_{{\| \, \bar{\ell}\, \|_\infty \le 1 \atop  
\ell_j+k_j\geq 0\, ,  \ j=1, \ldots\, ,d}} \, 2^{|\bar{k}+\bar{\ell}|t}\, \varphi_{k} \, \varphi_{\bar{\ell}+\bar{k}}\, \gf f_{\bar{k}+\bar{\ell}} \,,
\eeqq
valid in $\cs'(\R)$. Consequently, by using a standard convolution inequality and a homogeneity argument, we have
\begin{equation}
\begin{split}\nonumber
\| \, \gf^{-1}\varphi_{\bar{k}}&\gf f\, |L_{p'}(\R)\|\\
& \leq\, \sum_{{\| \, \bar{\ell}\, \|_\infty \le 1 \atop  
\ell_j+k_j\geq 0\, ,  \ j=1, \ldots\, ,d}} 2^{|\bar{k}+\bar{\ell}|t}\, 
\| \, \gf^{-1}\varphi_{k}\, \varphi_{\bar{\ell}+\bar{k}}\, \gf f_{\bar{k}+\bar{\ell}}\, |L_{p'}(\R) \|\\
& \leq\,
\sum_{{\| \, \bar{\ell}\, \|_\infty \le 1 \atop  
\ell_j+k_j\geq 0\, ,  \ j=1, \ldots\, ,d}}\, 2^{|\bar{k}+\bar{\ell}|t}\, 
\| \, \gf^{-1} [\varphi_{k}\, \varphi_{\bar{\ell}+\bar{k}}]\, \, |L_{1}(\R) \|
\|f_{\bar{k}+\bar{\ell}}|L_{p'}(\R) \|
\\
&\leq\, c_1\, 
\sum_{{\| \, \bar{\ell}\, \|_\infty \le 1 \atop  
\ell_j+k_j\geq 0\, ,  \ j=1, \ldots\, ,d}}\, 2^{|\bar{k}+\bar{\ell}|t}\, 
\|f_{\bar{k}+\bar{\ell}}|L_{p'}(\R) \|
\end{split}
\end{equation}
with $c_1$ independent of $f$ and $\bar{k}$. Therefore
\beqq
\| f|S^{-t}_{p',1}B(\R)\| \,\leq\, c_2\, \|\, (f_{\bar{k}})_{\bar{k}}\, |\ell_1(L_{p'})\|
\eeqq
follows.
This together with \eqref{dua-2} proves that 
\beqq
\| f|S^{-t}_{p',1}B(\R)\| \leq c_2 \, \|f| [\mathring{S}^{t}_{p,\infty}B(\R)]'\| 
\eeqq 
holds with a constant $c_2$ independent of $f$. 
\\
{\it Step 2. } We shall prove $[\mathring{S}^t_{p,\infty}B(\R)]'=S^{-t+\frac{1}{p}-1}_{\infty,1}B(\R)$. \\
{\it Substep 2.1.}  The embedding $S^t_{p,\infty}B(\R)\hookrightarrow S^{t-\frac{1}{p}+1}_{1,\infty}B(\R)$
implies
\beqq
\mathring{S}^t_{p,\infty}B(\R)\hookrightarrow \mathring{S}^{t-\frac{1}{p}+1}_{1,\infty}B(\R).
\eeqq
Duality and Step 1 yields
\beqq
S^{-t+\frac{1}{p}-1}_{\infty,1}B(\R)\hookrightarrow [\mathring{S}^t_{p,\infty}B(\R)]'\,.
\eeqq
{\it Substep 2.2.} Let $f \in[\mathring{S}^t_{p,\infty}B(\R)]' $. Following Hansen \cite[page 75]{Hansen} we choose a point $x_{\bar{k}}\in \R$
for any $\bar{k} \in \N_0^d$  such that
\be\label{dua-3}
\frac{1}{2}\| \gf^{-1}\varphi_{\bar{k}}\gf f|L_{\infty}(\R)\| \leq |(\gf^{-1}\varphi_{\bar{k}}\gf f)(x_{\bar{k}})|\leq 
\| \gf^{-1}\varphi_{\bar{k}}\gf f|L_{\infty}(\R)\|\, .
\ee
Then we define the function 
\beqq
\psi(x):=\sum_{|\bar{\ell}|\leq n} \, a_{\bar{\ell}}(\gf^{-1}\varphi_{\bar{\ell}})(x_{\bar{\ell}}-x)\, 
2^{|\bar{\ell}|(-t+\frac{1}{p}-1)}\, , \qquad x \in \R\, .
\eeqq
Obviously $\psi\in \cs(\R)$. An easy calculation yields
\begin{equation}
\begin{split}\nonumber
\|\, \psi\, &| S^t_{p,\infty}B(\R)\| \\
&=\, \sup_{\bar{k}\in \N_0^d} \, 2^{|\bar{k}|t}
\bigg\| \cfi \Big(\sum_{\|\, \bar{\ell}\, \|_\infty \le 1\atop  \,|\bar{k}+\bar{\ell}|\leq n} \, 2^{|\bar{k}+\bar{\ell}|(-t+\frac{1}{p}-1)}
\\
&\,\qquad\qquad\qquad \times  
\, a_{\bar{k}+\bar{\ell}}\, \varphi_{\bar{k}}(\xi) \, \varphi_{\bar{k}+\bar{\ell}}(-\xi) \, e^{-ix_{(\bar{k}+\bar{\ell})}\xi}\Big) (\, \cdot \, )
\, \bigg|L_p(\R)\bigg\|\\
& \le \,\sup_{\bar{k}\in \N_0^d} \, 
2^{|\bar{k}|t} \sum_{\|\, \bar{\ell}\, \|_\infty \le 1\atop |\bar{k}+\bar{\ell}|\leq n}
\, \big\|\, 2^{|\bar{k}+\bar{\ell}|(-t+\frac{1}{p}-1)}\, a_{\bar{k}+\bar{\ell}}\, 
\gf^{-1}[\varphi_{\bar{k}}\varphi_{\bar{k}+\bar{\ell}}(-\cdot)](\, \cdot \, )\, |L_p(\R)\big\|\\
&\leq \, c_1\,  \sup_{\bar{k}\in \N_0^d}\, 2^{|\bar{k}|t} 
\sum_{\| \, \bar{\ell}\, \|_\infty \le 1\atop |\bar{k}+\bar{\ell}|\leq n}
\, \big\|\, 2^{|\bar{k}+\bar{\ell}|(-t+\frac{1}{p}-1)}\, a_{\bar{k}+\bar{\ell}}\, 
\gf^{-1}[\varphi_{\bar{k}+\bar{\ell}}(-\cdot)] (\, \cdot \, )\, |L_p(\R)\big\|
\end{split}
\end{equation}
where the last inequality is a consequence of Lemma \ref{mul} and a homogeneity argument. 
Observe that 
\beqq
\|\gf^{-1}[\varphi_{\bar{k}+\bar{\ell}}(-\cdot)] (\, \cdot \, )\, |L_p(\R)\|=\|\, \gf \varphi_{\bar{k}+\bar{\ell}}\,  |L_p(\R)\|=
2^{|\bar{k}+\bar{\ell}|(1-\frac{1}{p}) }\, \|\, \gf \varphi_{\bar{1}}\,  |L_p(\R)\|
\eeqq
if $\bar{k}_i +\bar{\ell}_i\le 1$ for all $i=1, \ldots \, , d$.
If $\min (\bar{k}_i +\bar{\ell}_i)=0$ one has to modify this in an obvious way.
Altogether we have found
\beqq
\|\, \psi\, | S^t_{p,\infty}B(\R)\| \leq c_2 \, \sup_{\bar{k}\in \N_0^d}  \sum_{\|\, \bar{\ell}\, \|_\infty \le 1\, , 
\,|\bar{k}+\bar{\ell}|\leq n} \, |a_{\bar{k}+\bar{\ell}}| \leq c_3 \, \sup_{|\bar{k}|\leq 2\, n}\, |a_{\bar{k}}|\,.
\eeqq
This estimate can be used to derive 
\begin{equation}
\begin{split}\nonumber
\bigg|\sum_{|\bar{k}|\leq n}\, a_{\bar{k}}\, 2^{|\bar{k}|(-t+\frac{1}{d}-1)}(\gf^{-1}\varphi_{\bar{k}}\gf f)(x_{\bar{k}})\bigg|
&=\,\bigg|\sum_{|\bar{k}|\leq n}a_{\bar{k}}2^{|\bar{k}|(-t+\frac{1}{d}-1)}(f*\gf^{-1}\varphi_{\bar{k}})(x_{\bar{k}})\bigg|\\
&=\, |f(\psi)|\\
&\leq \, \|f\| \cdot \|\psi|S^t_{p,\infty}B(\R)\|\\
&\leq \, c_3\, \| f \|\,  \sup_{|\bar{k}|\leq 2n}\, |a_{\bar{k}}|\,.
\end{split}
\end{equation}
Employing  \eqref{dua-3} and the fact that the $a_{\bar{k}}$ can be chosen as we want, 
for instance such that
\[
a_{\bar{k}}\, (\gf^{-1}\varphi_{\bar{k}}\gf f)(x_{\bar{k}}) = 
|\gf^{-1}\varphi_{\bar{k}}\gf f)(x_{\bar{k}})|\, , 
\]
we  find
\beqq
 \sum_{|\bar{k}|\leq n} \, 2^{|\bar{k}|(-t+\frac{1}{p}-1)}\, \|\, \gf^{-1}\varphi_{\bar{k}}\gf f\, |L_{\infty}(\R)\|\leq c_3\, \| f\|\,.
 \eeqq
Here $c_3$ is independent of $f$ and $n$.
For $n\to \infty$ we obtain
 \beqq
 \|\, f\, |S^{-t+\frac{1}{p}-1}_{\infty,1} B(\R)\|\leq c_3\, \|f\|\,.
\eeqq
The proof is complete.
\end{proof}


\subsection{Test functions}


Let $d \ge 2$.
Before we are going to define some test functions we mention a few more properties of our smooth decompositions of unity.
As a consequence of the definitions we obtain
\[
\varphi_{\bar{k}} (x) = 1 \qquad \mbox{if}\qquad \frac 34 \, 2^{k_j} \le x_j\le 2^{k_j}\, , \qquad j=1, \ldots \, d\, ,  
\]
if $\min_{j=1, \ldots \, ,d} k_j >0$.
In case $\min_{j=1, \ldots \, ,d} k_j =0$ the following statement is true:
$\varphi_{\bar{k}} (x) = 1$  if $\frac 34 \, 2^{k_j} \le x_j\le 2^{k_j}$ holds for all $j$ such that $k_j =0$
and $ 0 \le x_j\le 1$ for the remaining components. 
\\
Now we switch to $(\psi_\ell)_{\ell}$. For $\ell \in \N$ it follows
\[
\psi_\ell (x) = 1 \quad \mbox{on the set} \quad \{x:\ \sup_{j=1,...,d} |x_j| \le 2^{\ell}\}\setminus 
\{x: \ \sup_{j=1,...,d} |x_j| \le \frac 34 \, 2^{\ell}\}\, .
\]
 
\subsection*{Example 1}

Let $g\in C_0^\infty(\R)$ be a function such that  $\supp g\in B(0,\epsilon)$ for $\epsilon >0$ small enough $(0<\epsilon<\frac{1}{8})$
and $|\cfi g (\xi)| >0$ on $[\pi,\pi]^d$.
For $\ell \in  \N$ we define the family of functions $f_\ell$ by
\be\label{ws-example1}
\gf f_\ell  (\xi) := \sum_{j=1}^{\ell} a_j\,  g(\xi_1-\frac{7}{8}2^{\ell},\xi_2-\frac{7}{8}2^j,\xi_3,...,\xi_d)\, , \qquad \xi \in \R \, , 
\ee
where the sequence $(a_j)_{j=1}^\ell$ of complex numbers will be chosen later on.
Note that
\begin{equation}
\begin{split}\nonumber
\supp g(\cdot-\frac{7}{8}2^{\ell},\cdot-\frac{7}{8}2^j,\cdot,...,\cdot) 
& \subset\, \{x: \ \varphi_{\bar{k}}(x)=1,\, \quad \bar{k}=(\ell,j,0,...,0)\}\\
&\subset \, \{x: \ \psi_{\ell}(x)=1 \}\, , \quad j=1, \ldots \, , \ell\, .
\end{split}
\end{equation}
It follows
\[
\gf^{-1}[\psi_{m} \gf f_\ell ]  = \delta_{m,\ell} \, f_\ell
\]
and 
\[
\gf^{-1}[\varphi_{\bar{k}} \gf f_\ell ]  = \delta_{\bar{k}, (\ell,j,0,...,0)}\, 
a_j\, \gf^{-1}  [g(\xi_1-\frac{7}{8}2^{\ell},\xi_2-\frac{7}{8}2^j,\xi_3,...,\xi_d)]\, .
\]
Hence
\begin{equation}
\begin{split}\label{ws-08}
 \|\, f_\ell\, |B^{t}_{p,q}(\R)\|  & = \,      2^{\ell t}    \, \big\|\gf^{-1}[\psi_{\ell}\gf f](\cdot)|L_p(\R)\big\|
\\
& = \, 2^{\ell t} \, \Big\|\sum_{j=1}^{\ell} a_j \, \gf^{-1} [g(\xi_1-\frac{7}{8}2^{\ell},\xi_2-\frac{7}{8}2^j,\xi_3,...,\xi_d)]\, 
\Big|L_p(\R)\Big\|
\\
& = \, 2^{\ell t} \, \Big\|\gf^{-1} g (x)\, \Big(\sum_{j=1}^{\ell} a_j \, e^{\frac{7}{8}i(2^\ell x_1 + 2^j x_2)} \Big)\,  \Big|L_p(\R)\Big\| 
\\
& \asymp \, 2^{\ell t} \, \Big\|\, \sum_{j=1}^{\ell} a_j \, e^{\frac{7}{8}i(2^\ell x_1 + 2^j x_2)} \,  \Big|L_p([-\pi,\pi]^2)\Big\| 
\, .
\end{split}
\end{equation}
For the last step we used that $\gf^{-1} g $ is rapidly decreasing, $|\cfi g (\xi)| >0$ on $[\pi,\pi]^d$ and that $\R$ can be written as 
\[
\R = \bigcup_{m \in \Z} [2m\pi, 2(m+1)\pi) \, .
\]
In case $1 <p<\infty$ a Littlewood-Paley characterization of $L_p([-\pi,\pi]^2)$ yields
\be\label{ws-09}
 \|\, f_\ell\, |B^{t}_{p,q}(\R)\|\asymp 2^{\ell t} \, \Big(\sum_{j=1}^{\ell} |a_j|^2\Big)^{1/2} \,.
\ee
Similarly
\begin{equation}
\begin{split}\label{ws-10}
\|\, f_\ell\, |S^{t}_{p,q}(\R)\|  = &    \Big(\sum_{j=1}^\ell  2^{jtq}    \, \big\|\gf^{-1}[\varphi_{(\ell,j,0,\ldots \, ,0)} \gf f_\ell](\cdot)
\, |L_p(\R)\big\|^q\Big)^{1/q}
\\
 = & \Big(\sum_{j=1}^\ell  2^{jtq} \, |a_j|^q   \, \big\|  \, \gf^{-1}  [g(\xi_1-\frac{7}{8}2^{\ell},\xi_2-\frac{7}{8}2^j,...,\xi_d)]
\, |L_p(\R)\big\|^q\Big)^{1/q}
\\
= & \big\|  \, \gf^{-1}  g\, |L_p(\R)\big\| \, \Big(\sum_{j=1}^\ell  2^{jtq} \, |a_j|^q  \Big)^{1/q}
\, .
\end{split}
\end{equation}

\subsection*{Example 2}

In case $p=\infty$ nontrivial periodic functions are contained in $B^t_{\infty,q}(\R)$ and 
$S^t_{\infty,q}B(\R)$.
So we can work directly with lacunary series.
Let 
\be\label{ws-example2}
 f_\ell  (x) := \sum_{j=1}^{\ell} a_j\,  e^{i ( 2^{\ell}x_1 + 2^j x_2)}\, , \qquad x=(x_1, \ldots \, ,x_d) \in \R \, .
\ee
Then
\[
\gf^{-1}[\psi_{m} \gf f_\ell ]  = \delta_{m,\ell} \, f_\ell
\]
and 
\[
\gf^{-1}[\varphi_{\bar{k}} \gf f_\ell ]  = \delta_{\bar{k}, (\ell,j,0,...,0)}\, 
a_j\, e^{i ( 2^{\ell}x_1 + 2^j x_2)} 
\]
follow. For $a_j \ge 0$ for all $j$ this will allow us to calculate the quasi-norms in  $B^t_{\infty,q}(\R)$ and 
$S^t_{\infty,q}B(\R)$.
We obtain in the first case
\begin{equation}
\begin{split}
\label{ws-11}
 \|\, f_\ell\, |B^{t}_{\infty,q}(\R)\|   = &  \,    2^{\ell t} \, \big\|\gf^{-1}[\psi_{\ell}\gf f](\cdot)|L_\infty(\R)\big\|
\\
 = &\, 2^{\ell t} \, \sup_{x \in \R} \, \Big|\sum_{j=1}^{\ell} a_j \,  e^{i ( 2^{\ell}x_1 + 2^j x_2)}\,\Big|  
\\
 = &\, 2^{\ell t} \, \sum_{j=1}^{\ell} a_j \, .
\end{split}
\end{equation}
Concerning the dominating mixed smoothness we conclude
\begin{equation}
\begin{split}
\label{ws-12}
 \|\, f_\ell\, |S^{t}_{\infty,q}(\R)\|   = & \,   \Big(\sum_{j=1}^\ell  2^{jtq}    \, 
\big\|\gf^{-1}[\varphi_{(\ell,j,0,\ldots \, ,0)} \gf f_\ell](\cdot) \, |L_\infty(\R)\big\|^q\Big)^{1/q}
\\
= &  \, \Big(\sum_{j=1}^\ell  2^{jtq} \, |a_j|^q  \Big)^{1/q}
\, .
\end{split}
\end{equation}
\subsection*{Example 3}

Let us consider a function   $g\in C_0^\infty(\re)$ such that 
$\supp g\subset \{x\in \re: 3/2\leq |x|\leq 2\} $. For $j\in \N$, $\bar{k}\in \N^d$ we define
\beqq
g_j(t)=g(2^{-j+1}t)
\quad\text{and}\quad
g_{\bar{k}}(x)=g_{k_1}(x_1)\cdots g_{k_d}(x_d)\, , \quad t \in \re,\ x \in \R\, .
\eeqq
Let  
\[
\nabla_{\ell}:=\{\bar{k}\in \N^d,\ |\bar{k}|_{\infty}=\ell\}\, , \qquad \ell \in \N\, .
 \]
 Then, if $\bar{k}\in \nabla_{\ell}$, we have
\beqq
\supp g_{\bar{k}}
\subset \{x: \ \varphi_{\bar{k}}(x)=1\} \subset  \{x: \ \psi_{\ell}(x)=1 \}.
\eeqq
We define the family of test functions 
\beqq
f_{\ell}=\sum_{\bar{k}\in \nabla_{\ell}}\, a_{\bar{k}}\, \gf^{-1} g_{\bar{k}}\, , \qquad \ell \in \N^d\, . 
\eeqq
The coefficients $(a_{\bar{k}})_{\bar{k}}$ will be chosen later on.
By construction we have 
\begin{equation}
\begin{split}\nonumber
\|f_{\ell}|S^{0}_{p,q}B(\R)\| & = \,  \bigg(\sum\limits_{\bar{k}\in \N_0^d}
              \big\|\gf^{-1}[\varphi_{\bar{k}}\gf f_\ell](\cdot)|L_p(\R)
             \big\|^q\bigg)^{1/q}\nonumber\\
 & = \,  \bigg(\sum_{\bar{k}\in \nabla_{\ell}} |a_{\bar{k}}|^q\big\|  \gf^{-1}g_{\bar{k}}\big|L_p(\R) \big\|^q\bigg)^{1/q}\, .
\end{split}
\end{equation}
Observe that
\beqq
\big\|  \gf^{-1}g_{\bar{k}}\big|L_p(\R) \big\| =2^{(|\bar{k}|-d)(1-\frac{1}{p})}\, 
\big\|\,   \gf^{-1}g_{\bar{1}}\big|L_p(\R) \big\|= C \, 2^{|\bar{k}|(1-\frac{1}{p})}
\eeqq
for an appropriate $C>0$ (independent of $\ell$).
Consequently we obtain
 \beqq
 \|f_{\ell}|S^{0}_{p,q}B(\R)\| \, = \,   C\, \bigg(\sum_{\bar{k}\in \nabla_{\ell}} |a_{\bar{k}}|^q\, 2^{|\bar{k}|(1-\frac{1}{p})q}\bigg)^{1/q}\,, \qquad 
\ell \in \N \, .
 \eeqq
 Next we compute
\begin{equation}
\begin{split}\nonumber
 \|f_{\ell}|B^{0}_{p,q}(\R)\| & = \,
            \bigg(\sum\limits_{j=0}^{\infty}
            \big\|\gf^{-1}[\psi_{j}\gf f_\ell](\cdot)|L_p(\R)
            \big\|^q\bigg)^{1/q}\\ 
 & = \,      \big\|\sum_{\bar{k}\in \nabla_{\ell}}a_{\bar{k}}\gf^{-1}g_{\bar{k}}|L_p(\R)  \Big\|.
\end{split}
\end{equation}
Recall, for $0<p_0<p<p_1<\infty$ we have
\beqq
S^{\frac{1}{p_0}-\frac{1}{p}}_{p_0,p}B(\R)\hookrightarrow S^0_{p,2}F(\R)\hookrightarrow S^{\frac{1}{p_1}-\frac{1}{p}}_{p_1,p}B(\R)\, ,
\eeqq
see \cite{HV}, and  $S^0_{p,2}F(\R)=L_p(\R)$,  $1<p<\infty$, see \cite{Li}. These arguments    lead to
\begin{equation}
\begin{split}\nonumber
\big\|\sum_{\bar{k}\in \nabla_{\ell}}a_{\bar{k}}\gf^{-1}g_{\bar{k}}|L_p(\R)  \Big\| 
\leq&\, C_1 \Big(\sum_{\bar{k}\in \nabla_{\ell}}  2^{|\bar{k}|(\frac{1}{p_0}-\frac{1}{p})p} |a_{\bar{k}}|^p\|\gf^{-1}g_{\bar{k}}|L_{p_0}(\R)\|^p\Big)^{\frac{1}{p}}  \\
=& \, C_2 \Big(\sum_{\bar{k}\in \nabla_{\ell}}  2^{|\bar{k}|(\frac{1}{p_0}-\frac{1}{p})p} 
|a_{\bar{k}}|^p 2^{|\bar{k}|(1-\frac{1}{p_0})p}\Big)^{\frac{1}{p}}\\
=&\, C_2 \Big(\sum_{\bar{k}\in \nabla_{\ell}}   |a_{\bar{k}}|^p2^{|\bar{k}|(1-\frac{1}{p})p}\Big)^{\frac{1}{p}}\,.
\end{split}
\end{equation}
Similarly we have
\begin{equation}
\begin{split}\nonumber
\Big(\sum_{\bar{k}\in \nabla_{\ell}}  2^{|\bar{k}|(\frac{1}{p_1}-\frac{1}{p})p}
 |a_{\bar{k}}|^p\|\gf^{-1}g_{\bar{k}}|L_{p_1}(\R)\|^p\Big)^{\frac{1}{p}}
=&\, C_3 \Big(\sum_{\bar{k}\in \nabla_{\ell}}   |a_{\bar{k}}|^p2^{|\bar{k}|(1-\frac{1}{p})p}\Big)^{\frac{1}{p}}\\
\leq &\, C_4 \big\|\sum_{\bar{k}\in \nabla_{\ell}}a_{\bar{k}}\gf^{-1}g_{\bar{k}}|L_p(\R)\big\|\,.
\end{split}
\end{equation}
Altogether we have proved in case $1 <p< \infty $
\beqq
 \|f_{\ell}|B^{0}_{p,q}(\R)\| \, \asymp \,  \Big(\sum_{\bar{k}\in \nabla_{\ell}}   |a_{\bar{k}}|^p2^{|\bar{k}|(1-\frac{1}{p})p}\Big)^{\frac{1}{p}}\,,
\eeqq
where the positive constants behind $\asymp$ do not depend on $\ell \in \N$.


\subsection*{Example 4}


We consider the same basic functions $g_{\bar{k}}$ as in Example 3. This time we define  
\beqq
f_{\ell} := \sum_{j=1}^{\ell } \, a_{j}\, \gf^{-1}g_{\bar{j}}\,,\qquad \bar{j}:=(j,1,\cdots,1)\,.
\eeqq
As above we conclude
\begin{equation}
\begin{split}\nonumber
\|f_{\ell}|S^{t}_{p,q}B(\R)\| 
 & = \,  \bigg(\sum_{j=1}^{\ell} 2^{tjq}|a_{j}|^q\big\|  \gf^{-1}g_{\bar{j}_0}\big|L_p(\R) \big\|^q\bigg)^{1/q}\nonumber\\
 & = \, C \bigg(\sum_{j=1}^{\ell} 2^{j(t+1-\frac{1}{p})q}|a_{j}|^q\bigg)^{1/q}
\end{split}
\end{equation}
and
\beqq
 \|f_{\ell}|B^{t}_{p,q}(\R)\|  
 \, = \, C \bigg(\sum_{j=1}^{\ell} 2^{j(t+1-\frac{1}{p})q}|a_{j}|^q\bigg)^{1/q}
\eeqq
for an appropriate positive constant $C$ (independent of $\ell$).
Notice that we do not need the restriction $1 <p< \infty$ here.
It is true for all $p$.


\subsection*{Example 5}


This will be one more modification of Example 3.
Let $g_{\bar{k}}$ be defined  as there. We put 
\beqq
f_{\ell} : = \sum_{j=1}^{\ell }\, a_{j}\, \gf^{-1}g_{\bar{j}}\,\qquad \bar{j}=(j,\cdots,j)\,.
\eeqq
Then we have 
\begin{equation}
\begin{split}
 \|f_{\ell}|S^{t}_{p,q}B(\R)\| 
 = &\,  \bigg(\sum_{j=1}^{\ell} 2^{tdjq}|a_{j}|^q\big\|  \gf^{-1}g_{\bar{j}}\big|L_p(\R) \big\|^q\bigg)^{1/q}\nonumber\\
  = &\, C  \bigg(\sum_{j=1}^{\ell} 2^{jd(t+1-\frac{1}{p})q}|a_{j}|^q\bigg)^{1/q}
\end{split}
\end{equation}
and
\beqq
 \|f_{\ell}|B^{t}_{p,q}(\R)\| 
 \, = \, C  \bigg(\sum_{j=1}^{\ell} 2^{jd(\frac{t}{d}+1-\frac{1}{p})q}|a_{j}|^q\bigg)^{1/q}\,,
\eeqq
where $C$ is as in Example 4. Also here the restriction $1 <p < \infty$ is not needed.


\subsection*{Example 6}


This example is taken from \cite[2.3.9]{Tr83}.
Let $\varrho \in \cs (\R) $ be a function such that 
$\supp \cf \varrho \subset \{\xi : \: |\xi|\le 1 \} $.
We define
\[
h_j(x):= \varrho (2^{-j} x)\, , \qquad x \in \R\, , \quad j \in \N\, .
\]
For all $p,q,t$ we conclude
\beqq
\|\, h_j \, |B^t_{p,q}(\R)\| = \|\, h_j \, |L_{p}(\R)\| = 2^{jd/p}\, \|\, \varrho \, |L_{p}(\R)\|\, , \qquad j \in \N\, .   
\eeqq
Similarly, also for all $p,q,t$,  we obtain
\beqq
\|\, h_j \, |S^t_{p,q}B(\R)\| = \|\, h_j \, |L_{p}(\R)\| = 2^{jd/p}\, \|\, \varrho \, |L_{p}(\R)\|\, ,\qquad j \in \N\, .      
\eeqq
As an immediate consequence of these two identities we get the following.

\begin{lemma}\label{p0p1}
Let $0 < p_0,p_1,q_0,q_1 \le \infty$ and $t_0,t_1 \in \re$.
\\
{\rm (i)} An embedding $S^{t_0}_{p_0,q_0}B(\R) \hookrightarrow B^{t_1}_{p_1,q_1}(\R)$ implies $p_0 \le p_1$.
\\
{\rm (ii)} An embedding $B^{t_0}_{p_0,q_0}(\R) \hookrightarrow S^{t_1}_{p_1,q_1}B(\R)$ implies $p_0 \le p_1$.
\end{lemma}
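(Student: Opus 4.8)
The plan is to test the embeddings against the functions $h_j$ from Example 6. The point is that Example 6 shows $\|h_j|B^t_{p,q}(\R)\| = \|h_j|S^t_{p,q}B(\R)\| = 2^{jd/p}\,\|\varrho|L_p(\R)\|$ for \emph{every} admissible choice of $t$ and $q$; thus these quasi-norms see the parameter $p$ only through the exponential factor $2^{jd/p}$, the constant $\|\varrho|L_p(\R)\|$ being a fixed positive number whenever $\varrho\not\equiv 0$. Feeding $h_j$ into a continuous embedding between the two scales therefore produces a comparison of $2^{jd/p_0}$ and $2^{jd/p_1}$ that is uniform in $j$, and letting $j\to\infty$ will force $p_0\le p_1$.

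Concretely, for (i) I would argue as follows. Assuming $S^{t_0}_{p_0,q_0}B(\R)\hookrightarrow B^{t_1}_{p_1,q_1}(\R)$, there is a constant $c>0$ with $\|f|B^{t_1}_{p_1,q_1}(\R)\|\le c\,\|f|S^{t_0}_{p_0,q_0}B(\R)\|$ for all $f$ in the left-hand space. Since $h_j\in\cs(\R)$ is band-limited, it belongs to both spaces, and inserting $f=h_j$ together with the two identities of Example 6 gives $2^{jd/p_1}\,\|\varrho|L_{p_1}(\R)\| \le c\,2^{jd/p_0}\,\|\varrho|L_{p_0}(\R)\|$ for every $j\in\N$. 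Rearranged, this reads $2^{jd(1/p_1-1/p_0)}\,\|\varrho|L_{p_1}(\R)\|\le c\,\|\varrho|L_{p_0}(\R)\|$, a bound independent of $j$; if $1/p_1>1/p_0$ the left-hand side would tend to infinity, a contradiction. Hence $1/p_1\le 1/p_0$, that is, $p_0\le p_1$ (with the usual convention $1/\infty:=0$). Part (ii) follows verbatim after interchanging the two quasi-norms, since Example 6 treats $B^t_{p,q}(\R)$ and $S^t_{p,q}B(\R)$ symmetrically.

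I do not expect a genuine obstacle here: the only items needing a word of care are choosing $\varrho\not\equiv 0$ so that $\|\varrho|L_p(\R)\|>0$ for every $p$, and the fact that $h_j$ actually lies in both spaces with the stated quasi-norm — but that is precisely what Example 6 supplies, so it may be quoted directly. An alternative, slightly more abstract route would invoke the closed graph theorem to produce the embedding constant, but the explicit chain above is shorter and entirely self-contained.
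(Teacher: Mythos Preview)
Your argument is correct and is exactly the paper's own proof: the lemma is stated in the paper as an ``immediate consequence'' of the two identities in Example~6, and you have simply spelled out that immediate consequence. The only detail worth tightening is that the continuity of the embedding is already part of the hypothesis (the symbol $\hookrightarrow$ means a continuous embedding in this paper), so the closed graph theorem aside is unnecessary.
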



\subsection{Proof of Theorem \ref{besov4} -- sufficiency}
\label{proof2}


{\em Step 1.} Preparations.
For  $\bar{k}\in \N_0^d$ we define
\beqq
\square_{\bar{k}}: = \{ j\in \N_0 :\quad  \supp\psi_{j}\cap\supp\varphi_{\bar{k}} \not=\emptyset\}
\eeqq
and $j\in \N_0$ 
\beqq
\Delta_{j}: = \{ \bar{k}\in \N_0^d :\quad  \supp\psi_{j}\cap\supp\varphi_{\bar{k}} \not=\emptyset\}.
\eeqq
The condition  
$  \supp\psi_{j}\cap\supp\varphi_{\bar{k}} \not=\emptyset $ 
implies  
\be \label{ws-02}
\max_{i=1, \ldots \, , d}\,  k_i-1\leq j\leq \max_{i=1,\ldots\, , d}\,  k_i+1.
\ee
Consequently we obtain
\be\label{ct2}
 |\square_{\bar{k}}| \asymp 1 \, , \ \ \bar{k} \in \N_0^d\, 
\qquad\text{and}\qquad
  |\Delta_{j}| \asymp (1+j)^{d-1}\, , \ \ j \in \N_0\, .  
\ee
By definition we have
\be\label{ct1b}
\psi_j(x)=\sum_{\bar{k}\in \Delta_{j}}\varphi_{\bar{k}}(x)\psi_j(x) \, , \qquad x \in \R\, .
\ee
{\it Step 2.}  Let $t>0$ and let $u=\min(1,p)$. Employing \eqref{ct1b} we find
\begin{equation}
\begin{split}\nonumber
\| f|B^{t}_{p,q}(\R)\|^q  = &\, \sum_{j=0}^{\infty}2^{jtq}\Big \|\sum_{\bar{k}\in \Delta_{j}}\gf ^{-1}\varphi_{\bar{k}} \psi_{j}\gf f\Big|L_{p}(\R)\Big\|^q 
\\
\leq &\,  \sum_{j=0}^{\infty}2^{jtq}\Big(\sum_{\bar{k}\in \Delta_{j}} \|\gf ^{-1}\varphi_{\bar{k}}\psi_j\gf f|L_p(\R)\|^u\Big)^{q/u}.
\end{split}
\end{equation}
Using \eqref{ct3} it follows
\beq\label{ct5}
\qquad\quad \| f|B^{t}_{p,q}(\R)\|^q \leq  C\,  \sum_{j=0}^{\infty}\bigg(\sum_{\bar{k}\in \Delta_{j}}\Big[2^{(j-|\bar{k}|)t} 2^{|\bar{k}|t}
\|\gf ^{-1}\varphi_{\bar{k}}\gf f|L_p(\R)\|\Big]^u\bigg)^{q/u}.
\eeq
If $\frac{q}{u}\leq 1$ then we have

\begin{equation}
\begin{split}
\| f|B^{t}_{p,q}(\R)\|^q 
&\leq C \, \sum_{j=0}^{\infty}\sum_{\bar{k}\in \Delta_{j}} 2^{(j-|\bar{k}|)tq}2^{|\bar{k}|tq}\|\gf ^{-1}\varphi_{\bar{k}}\gf f|L_p(\R)\|^q\, \\
&\leq  c_1 \, \sum_{\bar{k}\in \N_0^d}\sum_{j\in \square_{\bar{k}}}2^{|\bar{k}|tq}\|\gf ^{-1}\varphi_{\bar{k}}\gf f|L_p(\R)\|^q . \label{ct8}
\end{split}
\end{equation}
The  last inequality is due to $2^{(j-|\bar{k}|)tq}\leq c_2$ since $t>0$ and $j-1\leq \max_{i=1, \, \ldots\, ,d} k_i\leq j+1$, see \eqref{ws-02}.
In the case $\frac{q}{u}>1$ we use H\"older's inequality with $1 = \frac{u}q + (1-\frac uq)$.   \eqref{ct5} implies 
\beqq
\| f|B^{t}_{p,q}(\R)\|^q 
\,\leq\,C \, \sum_{j=0}^{\infty}\sum_{\bar{k}\in \Delta_{j}}2^{|\bar{k}|tq}\|\gf ^{-1}\varphi_{\bar{k}}\gf f|L_p(\R)\|^q
\Big(\sum_{\bar{k}\in \Delta_{j}}\big[2^{(j-|\bar{k}|)t} \big]^{\frac{q}{q-u}}\Big)^{\frac{q-u}{u}} .
\eeqq
Observe, for $t>0$ we have 
\[
\sup_{j \in \N_0}\, 
 \Big(\sum_{\bar{k}\in \Delta_{j}}\big[2^{(j-|\bar{k}|)t} \big]^{\frac{q}{q-u}}\Big)^{\frac{q-u}{u}} <\infty \, ,
\]
see \eqref{ws-02}.
Hence
\beq
\| f|B^{t}_{p,q}(\R)\|^q 
\,\leq \, c_3 \, \sum_{\bar{k}\in \N_0^d}\sum_{j\in \square_{\bar{k}}}2^{|\bar{k}|tq}\|\gf ^{-1}\varphi_{\bar{k}}\gf f|L_p(\R)\|^q  . \label{ct9}
\eeq
Finally, from \eqref{ct8}, \eqref{ct9} together with $\square_{\bar{k}}\asymp 1$  we conclude
\beqq
\| f|B^{t}_{p,q}(\R)\|^q \,\leq \,c_4\, \sum_{\bar{k}\in \N_0^d}2^{|\bar{k}|tq}\|\gf ^{-1}\varphi_{\bar{k}}\gf f|L_p(\R)\|^q.
\eeqq
This proves \eqref{ws-04}.\\
{\it Step 3.} Let $t=0$.
\\
{\it Substep 3.1.} First we assume that $q\leq \min(p,1)$. From \eqref{ct5} with $t=0$ we have
\begin{equation}
\begin{split}\nonumber
\| f|B^{0}_{p,q}(\R)\|^q 
&\leq \, c_1\, \sum_{j=0}^{\infty}\sum_{\bar{k}\in \Delta_{j}} \|\gf ^{-1}\varphi_{\bar{k}}\gf f|L_p(\R)\|^q\\
&= \, c_1 \, \sum_{\bar{k}\in \N_0^d}\sum_{j\in \square_{\bar{k}}}\|\gf ^{-1}\varphi_{\bar{k}}\gf f|L_p(\R)\|^q.
\end{split}
\end{equation}
Since $\square_{\bar{k}}\asymp1$ we obtain
\beqq
\| f|B^{0}_{p,q}(\R)\|^q  \,\leq \, c_2\, \| f|S^{0}_{p,q}B(\R)\|^q\, .
\eeqq 
{\it Substep 3.2.}  Let $1 < p < \infty$ and  $0 <  q \leq \min(2,p)$. Our main tool will be the following Littlewood-Paley assertion.
With $1 < p< \infty$ there exist positive constants $A,B$ such that
\be\label{LP}
A \, \|\, f \, |L_p (\R)\|\le 
\Big\| \Big(\sum_{k \in \N_0^d} |\gf ^{-1}\varphi_{\bar{k}}\gf f|^2\Big)^{1/2}\Big|L_p (\R)\Big\|  \le B \, \|\, f \, |L_p (\R)\|
\ee
holds for all $f \in L_p (\R)$, see Lizorkin \cite{Liz,Li} or Nikol'skij \cite[1.5.6]{Ni}. 
This will be applied to $f$ replaced by $\gf ^{-1} \psi_{j}\gf f$.
We proceed as in Step 1. Employing \eqref{ct1b} and \eqref{LP} we find
\begin{equation}\nonumber
\begin{split}
\| f|B^{0}_{p,q}(\R)\|^q & = \, \sum_{j=0}^{\infty} \Big\|\gf ^{-1} \psi_{j}\gf f
\Big|L_{p}(\R)\Big\|^q 
\\
&\leq \, \frac{1}{A^q}  \, \sum_{j=0}^{\infty}
\Big\| \Big(\sum_{\bar{k}\in \Delta_{j}} |\gf ^{-1}\varphi_{\bar{k}}\psi_j\gf f| ^2 \Big)^{1/2}\Big|L_p(\R)\Big\|^{q}.
\end{split}
\end{equation}
Because of $\| \, \cdot \, |L_p(\ell_2)\|\le \| \, \cdot \, |\ell_{\min(2,p)} (L_p)\| \le \| \, \cdot \, |\ell_{q} (L_p)\|$ 
we deduce
\begin{equation}\nonumber
\begin{split}
\| f|B^{0}_{p,q}(\R)\|^q &\le \,
\frac{1}{A^q}  \, \sum_{j=0}^{\infty}  \sum_{\bar{k}\in \Delta_{j}} 
\|\gf ^{-1}\varphi_{\bar{k}}\psi_j\gf f|L_p (\R)\|^q\\
&\le \,
c \, \sum_{j=0}^{\infty} \sum_{\bar{k}\in \Delta_{j}} \|\gf ^{-1}\varphi_{\bar{k}}\gf f|L_p (\R)\|^q
\end{split}
\end{equation}
where we used in the last step \eqref{ct3}.
As in Step 1 we can continue  the estimate by changing the order of summation and using $|\square_{\bar{k}}|\asymp 1$.
\qed


\subsection{Proof of Theorem \ref{besov1} -- sufficiency}
\label{proof1}


{\it Step 1.} Let us prove \eqref{ws-03} in case $t > \max (0, \frac 1p -1)$. We put $u:= \min (1,p)$. From we have
\be\label{ct1}
\varphi_{\bar{k}}(x) = \sum_{j\in \square_{\bar{k}}}\psi_j(x)\varphi_{\bar{k}}(x)\, , \qquad x \in \R\, .
\ee
This identity  yields
\beqq
\|f|S^{t}_{p,q}B(\R)\|^q 
\, = \,\sum_{\bar{k}\in \N_0^d}2^{|\bar{k}|tq}\Big\|\sum_{j\in \square_{\bar{k}}}\gf ^{-1}\psi_j \varphi_{\bar{k}} \gf f|L_{p}(\R)\Big\|^q.
\eeqq
Applying $|a+b|^u \le a^u + b^u$ and \eqref{ct4} we find
\begin{equation}
\begin{split}\nonumber
\|f|S^{t}_{p,q}B(\R)\|^q  \leq &   \sum_{\bar{k}\in \N_0^d}2^{|\bar{k}|tq}\Big(\sum_{j\in \square_{\bar{k}}}\big\|\gf ^{-1}\psi_j \varphi_{\bar{k}} \gf f|L_{p}(\R)\big\|^u\Big)^{q/u}\\
 \leq &\, C\,  \sum_{\bar{k}\in \N_0^d}2^{|\bar{k}|tq}\Big(\sum_{j\in \square_{\bar{k}}}\big(2^{(jd-|\bar{k}|)(\frac{1}{u}-1)}\|\gf ^{-1}\psi_j \gf f|L_p(\R)\| \big)^u\Big)^{q/u}.
\end{split}
\end{equation}
Because of \eqref{ct2} this implies
\beqq
\|f|S^{t}_{p,q}B(\R)\|^q \, \leq \, c\, \sum_{\bar{k}\in \N_0^d}\sum_{j\in \square_{\bar{k}}}2^{|\bar{k}|tq} 2^{(jd-|\bar{k}|)(\frac{1}{u}-1)q}\|\gf ^{-1}\psi_j \gf f|L_p(\R)\|^q\, .
\eeqq
Consequently 
\beqq
\|f|S^{t}_{p,q}B(\R)\|^q \, \le \, c\,  \sum_{j=0}^{\infty}   \, 2^{jdtq}\|\gf ^{-1}\psi_j \gf f|L_p(\R)\|^q\,          \sum_{\bar{k}\in \Delta_{j}} 2^{(jd-|\bar{k}|)(\frac{1}{u}-1-t)q}\, .
\eeqq
It is easily derived from \eqref{ws-02} and the restriction  $ t>\frac{1}{u}-1$ that  
\[
\sup_{j \in \N_0} \, \sum_{\bar{k}\in \Delta_{j}} 2^{(jd-|\bar{k}|)(\frac{1}{u}-1-t)q} < \infty \, .
\]
Hence
\beqq
\|f|S^{t}_{p,q}B(\R)\|^q \, \leq \,  c_1 \, \sum_{j=0}^{\infty}2^{jdtq}\|\gf ^{-1}\psi_j \gf f|L_p(\R)\|^q
\eeqq
follows.
\\
{\em Step 3.} Let $t=0$, $1 < p \le \infty$ and $\max(2,p)\leq q\leq \infty$. We shall argue by duality.
We have  
\[
S^0_{p',q'}B(\R) \hookrightarrow B^0_{p',q'}(\R)\,,
\]
see Theorem \ref{besov4}. \ref{dual1}(i) can be used to prove the claim. 
\\
{\em Step 4.} Let $0<  p \le  1$, $t= \frac 1p -1$ and $q=\infty$.
Applying \eqref{ct1} we find
\begin{equation}
\begin{split}\nonumber
\|f|S^{\frac{1}{p}-1}_{p,\infty}B(\R)\| = &\,\sup_{\bar{k}\in \N_0^d} 2^{|\bar{k}|(\frac{1}{p}-1)} \, 
\|\gf ^{-1}\varphi_{\bar{k}}\gf f|L_{p}(\R)\|\\
 = &\,\sup_{\bar{k}\in \N_0^d} \, 2^{|\bar{k}|(\frac{1}{p}-1)} \Big\|\sum_{j\in \square_{\bar{k}}}\gf ^{-1}\psi_j \varphi_{\bar{k}} \gf f|
L_{p}(\R)\Big\| .
\end{split}
\end{equation}
Making use of \eqref{ct4},  this implies
\begin{equation}
\begin{split}\nonumber
\|f|S^{\frac{1}{p}-1}_{p,\infty}B(\R)\|^p 
& \leq  \,  \sup_{\bar{k}\in \N_0^d} 2^{|\bar{k}|(\frac{1}{p}-1)p} \, 
\Big(\sum_{j\in \square_{\bar{k}}} \, \big\|\gf ^{-1}\psi_j \varphi_{\bar{k}} \gf f|L_{p}(\R)\big\|^p \Big)
\\
& \leq  c_1\,  \sup_{\bar{k}\in \N_0^d} \, 2^{|\bar{k}|(\frac{1}{p}-1)p} \, 
\Big(\sum_{j\in \square_{\bar{k}}} \, 2^{(jd-|\bar{k}|)(\frac{1}{p}-1)p}\,  \|\gf ^{-1}\psi_j \gf f|L_p(\R)\|^p  
\Big)\\
& =  c_1 \sup_{\bar{k}\in \N_0^d}\,  \Big(\sum_{j\in \square_{\bar{k}}} \, 2^{jd (\frac{1}{p}-1)p} \, 
\|\gf ^{-1}\psi_j \gf f|L_p(\R)\|^p  \Big).
\end{split}
\end{equation}

Taking into account \eqref{ct2}, we obtain 
\beqq
\|f|S^{\frac{1}{p}-1}_{p,\infty}B(\R)\| \, \leq \, c_2 \, \sup_{j\in \N_0}\,    2^{jd (\frac{1}{p}-1)}\, 
\|\gf ^{-1}\psi_j \gf f|L_p(\R)\| \, .
\eeqq
The proof is complete. \qed


\subsection{Proof of Theorem \ref{besov4} -- necessity}
\label{proof4}


\begin{lemma}\label{q<=2}
{\rm (i)} Let $0<p<\infty$ and $0<q\leq \infty$. Then the embedding 
  $$S^0_{p,q}B(\R)\hookrightarrow B^0_{p,q}(\R)\qquad \text{ implies }\qquad q\leq \min(2,p).$$
{\rm (ii)} Let  $0<q\leq \infty$. Then the embedding 
  $$S^0_{\infty,q}B(\R)\hookrightarrow B^0_{\infty,q}(\R)\qquad \text{ implies }\qquad q\leq 1.$$  
\end{lemma}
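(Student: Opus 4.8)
The plan is to establish the necessity of the conditions $q \le \min(2,p)$ (for $p<\infty$) and $q \le 1$ (for $p=\infty$) by exhibiting test functions that live in $S^0_{p,q}B(\R)$ but fail to lie in $B^0_{p,q}(\R)$ when these conditions are violated. The natural candidates are the families $f_\ell$ constructed in Example~1 (for $1<p<\infty$) and Example~2 (for $p=\infty$), since for these we have \emph{exact} formulas for both quasi-norms. In the isotropic norm these test functions concentrate the information in a single dyadic block $\psi_\ell$ but spread it over $\ell$ frequency-separated pieces inside that block; in the dominating-mixed norm these $\ell$ pieces occupy $\ell$ distinct dyadic boxes $\varphi_{(\ell,j,0,\dots,0)}$. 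This mismatch is precisely what forces the integrability gap.

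\textit{Case $1<p<\infty$.} Using \eqref{ws-09} and \eqref{ws-10} with $t=0$ we have
\[
\|\, f_\ell\, |B^{0}_{p,q}(\R)\|\asymp \Big(\sum_{j=1}^{\ell} |a_j|^2\Big)^{1/2}\, ,
\qquad
\|\, f_\ell\, |S^{0}_{p,q}B(\R)\| \asymp \Big(\sum_{j=1}^\ell |a_j|^q  \Big)^{1/q}\, .
\]
If the embedding $S^0_{p,q}B(\R)\hookrightarrow B^0_{p,q}(\R)$ holds, then with a constant independent of $\ell$ and of the coefficients we must have $\big(\sum_{j=1}^{\ell} |a_j|^2\big)^{1/2}\lesssim \big(\sum_{j=1}^\ell |a_j|^q\big)^{1/q}$ for all choices of $(a_j)$. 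Choosing $a_j\equiv 1$ gives $\ell^{1/2}\lesssim \ell^{1/q}$ for all $\ell$, which forces $q\le 2$. Combined with Lemma~\ref{p0p1}(i) — which already gives $p_0\le p_1$, hence here is automatic — and the elementary fact that $\ell_q\hookrightarrow\ell_2$ fails unless $q\le 2$, we still need $q\le p$: but this last restriction actually comes from a \emph{different} test function, namely Example~3, for which \eqref{ws-example1}-type functions give $\|f_\ell|S^0_{p,q}B\|\asymp\big(\sum_{\bar k\in\nabla_\ell}|a_{\bar k}|^q 2^{|\bar k|(1-1/p)q}\big)^{1/q}$ while $\|f_\ell|B^0_{p,q}\|\asymp\big(\sum_{\bar k\in\nabla_\ell}|a_{\bar k}|^p 2^{|\bar k|(1-1/p)p}\big)^{1/p}$; choosing all $a_{\bar k}$ equal and comparing the $\ell^{q}$-type sum against the $\ell^{p}$-type sum over $\nabla_\ell$ (which has cardinality $\asymp \ell^{d-1}$) forces $q\le p$. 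So the necessity $q\le\min(2,p)$ splits: Example~1 yields $q\le 2$, Example~3 yields $q\le p$.

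\textit{Case $p=\infty$.} Here I use Example~2. From \eqref{ws-11} and \eqref{ws-12} with $t=0$ and $a_j\ge 0$,
\[
\|\, f_\ell\, |B^{0}_{\infty,q}(\R)\| = \sum_{j=1}^{\ell} a_j\, ,
\qquad
\|\, f_\ell\, |S^{0}_{\infty,q}B(\R)\| = \Big(\sum_{j=1}^\ell a_j^q  \Big)^{1/q}\, .
\]
The embedding forces $\sum_{j=1}^\ell a_j\lesssim \big(\sum_{j=1}^\ell a_j^q\big)^{1/q}$ uniformly in $\ell$ and in $(a_j)$; taking $a_j\equiv 1$ gives $\ell\lesssim \ell^{1/q}$, hence $q\le 1$. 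This proves part (ii).

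\textit{Main obstacle.} The computations themselves are short because the exact quasi-norm identities in Examples~1--3 are already available in the excerpt; the only real care needed is (a) confirming that the frequency supports in \eqref{ws-example1} genuinely sit inside disjoint boxes $\varphi_{(\ell,j,0,\dots,0)}$ and inside a single ring $\psi_\ell$ — this is exactly the support computation recorded after \eqref{ws-example1} — and (b) making sure, in the $1<p<\infty$ case, that the Littlewood--Paley step \eqref{ws-09} is legitimately applicable, i.e. that the exponential sum $\sum_j a_j e^{\frac78 i(2^\ell x_1+2^j x_2)}$ is indeed a lacunary-type sum whose $L_p([-\pi,\pi]^2)$-norm is comparable to its $\ell_2$ coefficient norm. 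Both points are handled in the excerpt, so the necessity proof reduces to the three uniform inequalities above evaluated on the constant coefficient sequence. The one genuinely new observation to spell out is that the $q\le p$ half of (i) cannot be obtained from Example~1 alone (whose isotropic norm only ever sees an $\ell_2$ sum) and must be extracted from Example~3, where the competing integrability exponents are $q$ and $p$ rather than $q$ and $2$.
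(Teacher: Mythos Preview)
Your argument for $1<p<\infty$ and for $p=\infty$ follows the paper's proof closely: Example~1 forces $q\le 2$, Example~3 forces $q\le p$, and Example~2 forces $q\le 1$ when $p=\infty$. Two small corrections. In the Example~3 step you should not take ``all $a_{\bar k}$ equal'' but rather $a_{\bar k}=2^{|\bar k|(1/p-1)}$, so that the weights $2^{|\bar k|(1-1/p)}$ cancel and the two sums reduce to $|\nabla_\ell|^{1/p}$ versus $|\nabla_\ell|^{1/q}$; with constant $a_{\bar k}$ and $p>1$ both sums are dominated by the single corner term $\bar k=(\ell,\dots,\ell)$ and you learn nothing. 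Also, the appeal to Lemma~\ref{p0p1} is irrelevant here (that lemma concerns $p$, not $q$) and can be dropped.

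The genuine gap is the range $0<p\le 1$, which part~(i) of the lemma explicitly includes but your proof does not address. Both tools you invoke fail there: the Littlewood--Paley identity \eqref{ws-09} behind Example~1 requires $1<p<\infty$, and the isotropic norm computation for Example~3 is only established in the paper for $1<p<\infty$. The paper handles this range with two additional arguments. For the restriction $q\le 2$ it keeps Example~1 but replaces \eqref{ws-09} by a Lyapunov interpolation inequality between $L_p$ and $L_2$ on the torus, which still yields the $\ell_2$ lower bound for the $L_p$-norm of the lacunary sum even when $p\le 1$. For the restriction $q\le p$ it argues by contradiction via complex interpolation (Propositions~\ref{inter1}--\ref{inter2}): if the embedding held for some $0<p\le 1$ with $p<q\le 2$, interpolating with the trivial embedding at $(p,q)=(2,2)$ would produce an embedding $S^0_{p_1,q_1}B\hookrightarrow B^0_{p_1,q_1}$ with $1<p_1<q_1\le 2$, contradicting what Example~3 already established in the reflexive range. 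Without these two extra steps your proof of part~(i) is incomplete.
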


\begin{proof}
{\em Step 1.} We prove (i).\\
{\em Substep 1.1.} We show necessity of $q\leq 2$. 
Temporarily we assume $1 <p< \infty$. We use our test functions from Example 1, see \eqref{ws-example1}.
The embedding $S^0_{p,q}B(\R)\hookrightarrow B^0_{p,q}(\R)$ implies
the existence of a constant $c$ such that 
\[
\Big(\sum_{j=1}^{\ell} |a_j|^2\Big)^{1/2} \asymp 
 \|\, f_\ell\, |B^{0}_{p,q}(\R)\|  \le c \, \|\, f_\ell\, |S^{t}_{p,q}(\R)\|\asymp \Big(\sum_{j=1}^\ell   \, |a_j|^q  \Big)^{1/q}
\]
where $c$ does not depend on $\ell$ and $(a_j)_j$,
see \eqref{ws-09} and \eqref{ws-10}.
This requires $q \le 2$.
\\
Now we turn to $0 < p \le 1$. Again we shall work with Example 1. For any such $p$ there exists some real number $\Theta \in (0,1)$ such that
\[
 \frac 23 = \frac{1-\Theta}{p} + \frac \Theta 2\, .
\]
Lyapunov's inequality 
\[
\|\, h \,|L_{3/2}([-\pi,\pi]^2)\|
\le  \|\, h\, |L_p([-\pi,\pi]^2)\|^{1-\Theta} 
\, \|\, h \, |L_2([-\pi,\pi]^2)\|^\Theta \, ,
\]
valid for all $h \in L_p([-\pi,\pi]^2) \cap L_2([-\pi,\pi]^2) $, in combination with 
the Littlewood-Paley characterization of $L_{3/2}$ and $L_2$, leads us to 
\[
 \Big(\sum_{j=1}^{\ell} |a_j|^2\Big)^{1/2} \le c\, 
 \Big\|\, \sum_{j=1}^{\ell} a_j \, e^{\frac{7}{8}i(2^\ell x_1 + 2^j x_2)} \,  \Big|L_p([-\pi,\pi]^2)\Big\|^{1-\Theta} 
 \Big(\sum_{j=1}^{\ell} |a_j|^2\Big)^{\Theta/2}
\]
with $c$ independent of $\ell$ and $(a_j)_j$.
Hence
\[
 \Big(\sum_{j=1}^{\ell} |a_j|^2\Big)^{1/2} \le c \, \Big\|\, \sum_{j=1}^{\ell} a_j \, e^{\frac{7}{8}i(2^\ell x_1 + 2^j x_2)} \,  \Big|L_p([-\pi,\pi]^2)\Big\|\, .
\]
Taking into account \eqref{ws-08} we can argue as in case $1 < p< \infty$.\\ 
{\em Substep 1.2.} We show necessity of  $q\leq p$. Therefore we use Example 3. 
In case  $1<p<\infty$ we choose
$a_{\bar{k}} = 2^{|\bar{k}| (\frac 1p -1)}$. Then almost immediately we can conclude $q\le p$.
\\
Now we turn to the remaining cases.
Assume that there exist $0<p\leq 1$ and $p<q\leq 2$ such that
\beqq
S^{0}_{p,q}B(\R)\hookrightarrow B^0_{p,q}(\R)\, .
\eeqq
In this situation we may choose  a triple $(p_1,q_1,\Theta)$ such that 
\beqq
1<p_1<q_1\leq 2,\quad \Theta\in (0,1),\quad \frac{1}{p_1}=\frac{\Theta}{p}+\frac{1-\Theta}{2}\quad \text{and} \quad \frac{1}{q_1}=\frac{\Theta}{q}+\frac{1-\Theta}{2}.
\eeqq
Then it follows from Proposition \ref{inter2} that
\beqq
S^0_{p_1,q_1}B(\R)=[S^0_{p,q}B(\R),S^0_{2,2}B(\R)]_{\Theta}
\eeqq
and
\beqq
B^0_{p_1,q_1}(\R)=[B^0_{p,q}(\R),B^0_{2,2}(\R)]_{\Theta}.
\eeqq
Proposition \ref{inter1}  yields
\beqq
S^0_{p_1,q_1}B(\R)\hookrightarrow B^0_{p_1,q_1}(\R).
\eeqq
But this is a contradiction to Example 3.
\\
{\em Step 2.}
To prove (iii)  we use Example 2, see \eqref{ws-example2}.
The embedding $S^0_{\infty,q}B(\R)\hookrightarrow B^0_{\infty,q}(\R)$ implies
the existence of a constant $c$ such that 
\[
\sum_{j=1}^{\ell} |a_j| =  
 \|\, f_\ell\, |B^{0}_{\infty,q}(\R)\|  \le c \, \|\, f_\ell\, |S^{0}_{\infty,q}(\R)\| \asymp \Big(\sum_{j=1}^\ell  \, |a_j|^q  \Big)^{1/q}
\]
where $c$ does not depend on $\ell$ and $(a_j)_j$,
see \eqref{ws-11} and \eqref{ws-12}.
Choosing $a_j =1$ it is obvious that this can happen only if $q\le 1$.
\end{proof}

\noindent
\textsc{Proof of Theorem \ref{besov4}}.
{\em Step 1}.
Let $t=0$. Then the necessity of $q \le \min (p,2)$ if $p< \infty$ 
and of $q\le 1$ if $p= \infty$ follows from  Lemma \ref{q<=2}.
\\
{\em Step 2.} It remains to deal with $t<0$.
We shall employ  complex interpolation.
\\
{\em Substep 2.1.} We assume that $0 < p< \infty$ and $1<q<\infty$. Let $\Theta =1 /2$. From Proposition \ref{inter2}  we obtain 
\[
B^{0}_{p,q_1}(\R)=[B^{-t}_{p,\infty}(\R),\, B^{t}_{p,q}(\R)]_{\Theta}
\]
and
\[
S^{0}_{p,q_1}B(\R)=[S^{-t}_{p,\infty}B(\R),\, S^{t}_{p,q} B(\R)]_{\Theta}, \qquad \frac{1}{q_1} =  \frac{1}{2q}\, .
\]
Theorem \ref{besov4} and the assumption  $S^t_{p,q}B(\R)\hookrightarrow B^t_{p,q}(\R)$ lead to
the conclusion $S^0_{p,q_1}B(\R)\hookrightarrow B^0_{p,q_1}(\R)$. In view of  Lemma \ref{q<=2} this embedding implies  
$2q = q_1 \le \min (p,2)$. This is a contradiction.
\\
{\em Substep 2.2.} Let $0 < p< \infty$ and $q=\infty$. This time we use 
\[
B^{0}_{p,4}(\R)=[B^{-t}_{p,2}(\R),\, B^{t}_{p,\infty}(\R)]_{\Theta}
\]
and
\[
S^{0}_{p,4}B(\R)=[S^{-t}_{p,2}B(\R),\, S^{t}_{p,\infty} B(\R)]_{\Theta}.
\]
Here $\Theta=1/2$. Now Proposition \ref{inter1} implies $S^{0}_{p,4}B(\R)\hookrightarrow B^{0}_{p,4}(\R)$. This contradicts  Theorem \ref{besov4}.\\
{\em Substep 2.3.}
Let $0 < p< \infty$ and $0 < q \le 1$. We argue as in the previous step.
Therefore we shall use
\[
B^{0}_{p_1,q_1}(\R)=[B^{s}_{\infty,\infty}(\R),\, B^{t}_{p,q}(\R)]_{\Theta}
\]
and
\[
S^{0}_{p_1,q_1}B(\R)=[S^{s}_{\infty,\infty}B(\R),\, S^{t}_{p,q} B(\R)]_{\Theta},
\]
where we need $(1-\Theta) s + \Theta t =0$,  
\[
\frac{1}{p_1} =  \frac{\Theta}{p} \qquad \mbox{and}\qquad \frac{1}{q_1} = \frac{\Theta}{q}\, . 
\]
By choosing $\Theta$ small we arrive at $q_1 >2$. This contradicts Lemma \ref{q<=2}.
\\
{\em Substep 2.4.} We assume that  $p = \infty$ and $0 < q\le \infty$. Proposition \ref{inter3} yields
\[
\accentset{\diamond}{B}^{0}_{\infty,q_1}(\R)=[B^{s}_{\infty,\infty}(\R),\, B^{t}_{\infty,q}(\R)]_{\Theta}
\]
and
\[
\accentset{\diamond}{S}^{0}_{\infty,q_1}B(\R)=[S^{s}_{\infty,\infty}B(\R),\, S^{t}_{\infty,q} B(\R)]_{\Theta},
\]
where  $(1-\Theta) s + \Theta t =0$ and   $ \frac{1}{q_1} = \frac{\Theta}{q} $.
We choose $\Theta$ small  
enough such that $q_1>1$.  
Then, as a conclusion of Theorem \ref{besov4} and Propositions \ref{inter1}, \ref{inter2},
we get $\accentset{\diamond}{S}^{0}_{\infty,q_1}B(\R) \hookrightarrow \accentset{\diamond}{B}^{0}_{\infty,q_1}(\R)$.
Now we argue as in Step 3 of the proof of Lemma \ref{q<=2} by taking into account that our test functions from Example 2
are elements of  $\accentset{\diamond}{B}^{0}_{\infty,q_1}(\R) \cap \accentset{\diamond}{S}^{0}_{\infty,q_1}B(\R)$.
\qed


\subsection{Proof of Theorem \ref{besov1} -- necessity}
\label{proof3}


By means of the same arguments as used in proof of Lemma \ref{q<=2} the following dual assertion can be proved.

\begin{lemma}\label{q>=2}
Let $1<p\leq\infty$ and $0<q\leq \infty$. Then the embedding 
  $$B^0_{p,q}(\R)\hookrightarrow S^0_{p,q}B(\R)\qquad \text{ implies }\qquad q\ge \max(p,2).$$
\end{lemma}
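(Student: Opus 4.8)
The plan is to deduce Lemma~\ref{q>=2} from Lemma~\ref{q<=2} by duality, dealing with $1<p<\infty$ first and with the endpoint $p=\infty$ afterwards. Assume $1<p<\infty$ and $B^0_{p,q}(\R)\hookrightarrow S^0_{p,q}B(\R)$. Since the identity maps $\cs(\R)$ into itself, this embedding restricts to a continuous injection $\mathring B^0_{p,q}(\R)\to\mathring S^0_{p,q}B(\R)$ whose range contains $\cs(\R)$ and is hence dense. Passing to adjoints therefore gives the reverse inclusion of the dual spaces, which by Proposition~\ref{dual1}(i) --- applicable because $1\le p<\infty$ --- reads $S^0_{p',q'}B(\R)\hookrightarrow B^0_{p',q'}(\R)$. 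Applying Lemma~\ref{q<=2}(i) with the exponent pair $(p',q')$ (note $p'<\infty$ exactly because $p>1$) forces $q'\le\min(2,p')$; since $q'\le2\Leftrightarrow q\ge2$ and $q'\le p'\Leftrightarrow q\ge p$, this is precisely $q\ge\max(2,p)$.

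For the endpoint $p=\infty$ one cannot quote Proposition~\ref{dual1} directly, so there are two routes. The first is to record, by the same arguments as in the proof of Proposition~\ref{dual1} (with $p$ and $p'$ interchanged; for the isotropic case see \cite[Section~2.11]{Tr83}), that $[\mathring B^0_{\infty,q}(\R)]'=B^0_{1,q'}(\R)$ and $[\mathring S^0_{\infty,q}B(\R)]'=S^0_{1,q'}B(\R)$; then the argument above yields $S^0_{1,q'}B(\R)\hookrightarrow B^0_{1,q'}(\R)$, and Lemma~\ref{q<=2}(i) with $p=1$ gives $q'\le\min(2,1)=1$, hence $q'=1$ and $q=\infty=\max(\infty,2)$. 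The second, bypassing this duality, is complex interpolation: were $B^0_{\infty,q}(\R)\hookrightarrow S^0_{\infty,q}B(\R)$ true with $q<\infty$, then interpolating the identity operator with the trivial embedding $B^0_{2,2}(\R)\hookrightarrow S^0_{2,2}B(\R)$ (both spaces being $L_2(\R)$), via Propositions~\ref{inter2} and~\ref{inter1} --- the compatibility hypothesis of Proposition~\ref{inter2} holds since $\max(2,2)<\infty$ --- would produce $B^0_{p_0,q_0}(\R)\hookrightarrow S^0_{p_0,q_0}B(\R)$ with $\frac1{p_0}=\frac\Theta2$ and $\frac1{q_0}=\frac{1-\Theta}q+\frac\Theta2$, so that $2<p_0<\infty$ but $\frac1{q_0}>\frac1{p_0}$, i.e. $q_0<p_0$, contradicting the case $1<p<\infty$ already settled. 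Either way $q=\infty$.

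For completeness I note that the duality step can be replaced by the test functions of Examples~1 and~3, which is the variant literally mirroring the proof of Lemma~\ref{q<=2}. For $1<p<\infty$, \eqref{ws-09} and \eqref{ws-10} with $t=0$ turn the embedding into $\bigl(\sum_{j=1}^{\ell}|a_j|^q\bigr)^{1/q}\lesssim\bigl(\sum_{j=1}^{\ell}|a_j|^2\bigr)^{1/2}$, and $a_j\equiv1$ forces $q\ge2$; choosing $a_{\bar{k}}=2^{|\bar{k}|(1/p-1)}$ on $\nabla_\ell$ in Example~3 collapses the norms computed there to $\|f_\ell|S^0_{p,q}B(\R)\|\asymp|\nabla_\ell|^{1/q}$ and $\|f_\ell|B^0_{p,q}(\R)\|\asymp|\nabla_\ell|^{1/p}$, and $|\nabla_\ell|\asymp\ell^{d-1}\to\infty$ (using $d\ge2$) forces $q\ge p$; the endpoint $p=\infty$ is again handled by the interpolation step of the previous paragraph. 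The one genuine obstacle is exactly this endpoint: there the test functions of Examples~1 and~2 only deliver $q\ge1$ rather than $q=\infty$, and Proposition~\ref{dual1} is not literally available, so one is forced to supply either the missing $p=\infty$ duality or the interpolation detour. The remaining ingredients --- the density bookkeeping needed to dualise, the elementary equivalences between conditions on $(p,q)$ and on $(p',q')$, and the norm computations for the test functions --- are routine.
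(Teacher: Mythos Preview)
Your proof is correct. The paper itself offers only the one-line remark ``By means of the same arguments as used in proof of Lemma~\ref{q<=2} the following dual assertion can be proved,'' which points to rerunning the test-function computations of Examples~1--3 with the inequalities reversed; this is exactly the variant you record in your final paragraph. Your \emph{primary} argument, however, is genuinely different: rather than recompute, you dualise the assumed embedding via Proposition~\ref{dual1}(i) and feed the result straight into Lemma~\ref{q<=2}. This is cleaner and avoids redoing any estimates.

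You are also more careful than the paper at the endpoint $p=\infty$. The paper's ``same arguments'' phrase glosses over the fact that the Example~2 test functions only yield $q\ge 1$, not $q=\infty$, as you correctly observe. Your interpolation detour --- interpolating the hypothetical embedding $B^0_{\infty,q}\hookrightarrow S^0_{\infty,q}B$ (for $q<\infty$) against the trivial $B^0_{2,2}=S^0_{2,2}B$ to land at a pair $(p_0,q_0)$ with $2<p_0<\infty$ and $q_0<p_0$ --- is a legitimate fix, and the hypothesis of Proposition~\ref{inter2} is indeed met since $\max(2,2)<\infty$. The alternative you mention, extending Proposition~\ref{dual1} to $p=\infty$, also works but requires a page of argument the paper does not supply. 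Either way, your treatment of $p=\infty$ fills a gap that the paper's terse reference leaves open.
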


\noindent
\textsc{Proof of Theorem  \ref{besov1}}.
{Step 1}. Let $0 <p \le 1$ and $t=\frac 1p -1$. Assume that there is some $q<\infty$ such that
$B^{td}_{p,q} (\R) \hookrightarrow S^{t}_{p,q}B (\R)  $ holds.
Then Proposition \ref{dual1} yields
$S^{0}_{\infty,q'} B(\R) \hookrightarrow B^{0}_{\infty,q'}(\R)$.
In view of Lemma   \ref{q<=2} this implies $q'\le 1$, hence $q= \infty$.
\\
{\em Step 2.} The necessity of the restrictions in case $t=0$ follows by Lemma \ref{q>=2}.
\\
{\em Step 3.} It remains to deal with $t< \max (0, \frac 1p -1)$.
\\
{\em Step 3.1.} Let $0 < p \leq \infty$ and $t<0$.
We employ the test functions from Example 1.
Choosing $a_j = \delta_{j,\ell}$ in \eqref{ws-example1}, we find
\be
 \| \, f_\ell \, |B^{td}_{p,q} (\R)\| = 2^{\ell td}\, \| \, \gf ^{-1} g \, |L_p (\R)\| \label{k-1}
\ee
and 
\be
 \| \, f_\ell \, |S^{t}_{p,q} B(\R)\| = 2^{\ell t}\, \| \, \gf ^{-1} g \, |L_p (\R)\|\, ,  \label{k-2}
\ee
see \eqref{ws-08} and \eqref{ws-10}.
With $\ell \to \infty$ it becomes clear that 
\[
B^{td}_{p,q} (\R) \hookrightarrow S^{t}_{p,q} B(\R)
\]
can not hold.
\\
{\em Step 3.2.} Let $0 < p < 1$ and $0 \le t< \frac 1p -1$.
We assume $B^{td}_{p,q} (\R) \hookrightarrow S^{t}_{p,q} B(\R)$. Proposition \ref{dual1} yields 
\[
S^{-t+\frac 1p -1}_{\infty,q'}B(\R) \hookrightarrow B_{\infty,q'}^{d(-t+ \frac 1p -1)} (\R) \, .
\]
Since $d(-t + \frac 1p -1 ) > -t + \frac 1p -1$ it is enough to use 
$g_k (x) : = e^{ikx}$, $k \in \Z$, as test functions to disprove this embedding.
\qed


\subsection{Proof of Propositions \ref{besov14}, \ref{besov15}}
\label{proof6}


\textsc{Proof of Proposition \ref{besov14}.} 
Part (i) in case $1 \le q\le \infty$ follows by duality from Theorem \ref{besov4}, see Proposition \ref{dual1}. 
To cover also the cases $0 < q < 1$ we argue as in proof of Theorem \ref{besov1} (sufficiency)  replacing $B^{td}_{p,q}(\R)$ by $B^t_{p,q}$.
Observe in this connection that  
\[
\sup_{j \in \N_0} \, \sum_{\bar{k}\in \Delta_{j}} 2^{(|\bar{k}|-j)tq} < \infty \, .
\]
Parts (ii)-(iv) are immediate consequences of Theorems \ref{besov4}, \ref{besov1}.
\\
Now we turn to the proof of (v). Theorem \ref{besov4} yields 
$S^{t}_{p,q} B(\R) \not \hookrightarrow B^{t}_{p,q} (\R)$.
It remains to prove
$B^{t}_{p,q} (\R) \not \hookrightarrow S^{t}_{p,q} B(\R)$.
Therefore we employ Example 1 with $a_j := 2^{-jt}$, $j\in \N$.
From \eqref{ws-08} it follows
\begin{equation}
\begin{split}\nonumber
\|\, f_\ell\, |B^{t}_{p,q}(\R)\| 
\le &\, c\,  2^{\ell t} \, \Big\|\, \sum_{j=1}^{\ell} a_j \, e^{\frac{7}{8}i(2^\ell x_1 + 2^j x_2)} \,  \Big|L_p([-\pi,\pi]^2)\Big\| 
\\
\le &\, c  2^{\ell t} \, \Big( \sum_{j=1}^{\ell} |a_j|^p\Big)^{1/p} \asymp 1 
\, .
\end{split}
\end{equation}
On the other hand, \eqref{ws-10} yields
\[
\|\, f_\ell\, |S^{t}_{p,q}(\R)\|   =   \big\|  \, \gf^{-1}  g\, |L_p(\R)\big\| \, \Big(\sum_{j=1}^\ell  2^{jtq} \, |a_j|^q  \Big)^{1/q}
\asymp \ell^{1/q}
\, .
\]
If we assume $B^{t}_{p,q} (\R)  \hookrightarrow S^{t}_{p,q} B(\R)$, $q<\infty$,  then this leads to a contradiction. In case $q=\infty$ we employ the same type of argument but choose $a_j:= 2^{-jt}j^{-1}$, $j \in \N$. \qed
\\
{~}\\
\noindent
\textsc{Proof of Proposition \ref{besov15}.} 
To prove (i)  we follow the arguments used in proof of Theorem \ref{besov4} (sufficiency) by replacing 
$B^t_{p,q}(\R)$ by $B^{td}_{p,q}(\R)$.
\\
Concerning (ii)-(iv), observe that $B^{td}_{p,q} (\R) \not \hookrightarrow S^{t}_{p,q} B(\R)$ follows from Theorem \ref{besov1}.
Now we split our investigations into two cases: $0 < t < \frac 1p -1$, $t=0$.
\\
{\em Step 1.} Let $0 <p<1$ and $0< t < \frac 1p -1$.
$S^{t}_{p,q} B(\R) \not \hookrightarrow B^{td}_{p,q} (\R)$ follows from 
Example 1 with $a_j := \delta_{j,\ell}$, see \eqref{k-1} and \eqref{k-2}.
\\
{\em Step 2.} Let $0 <p<1$ and $t=0$.
Theorem \ref{besov4} yields
\[
 S^{0}_{p,q} B(\R)  \hookrightarrow B^{0}_{p,q} (\R)\qquad \Longleftrightarrow \qquad 0 < q \le p\, .
\]
Hence, in case $q>p$ the spaces  $S^{0}_{p,q} B(\R)$ and $B^{0}_{p,q} (\R)$ are not comparable. \qed


\subsection{Proofs of the optimality assertions}
\label{proof5}


First we recall some well-known results about embeddings of Besov spaces.
Let $0 < p \le p_0 \le \infty$. Then 
$B^{t}_{p,q} (\R) \hookrightarrow B^{t_0}_{p_0,q_0} (\R) $ holds if and only if
either 
\[
 t_0 - \frac{d}{p_0} <  t- \frac dp  \qquad \mbox{and \quad $q,q_0$ are arbitrary}
\]
or 
\[
  t_0 - \frac{d}{p_0} = t- \frac dp  \qquad \mbox{and} \qquad  q \le q_0 \, .
\]
This result has a certain history. For the first time it has been proved by Taibleson in his series of papers  \cite{ta1}-\cite{ta3}, but see also \cite[2.7.1]{Tr83} and \cite{sitr}.
In case of the Besov spaces of dominating mixed smoothness the following is known. Again we suppose
$0 < p \le p_0 \le \infty$. Then 
$S^{t}_{p,q} B(\R) \hookrightarrow S^{t_0}_{p_0,q_0} B(\R) $ holds if and only if
either 
\[
 t_0 - \frac{1}{p_0} <  t- \frac 1p  \qquad \mbox{and \quad $q,q_0$ are arbitrary}
\]
or 
\[
  t_0 - \frac{1}{p_0} = t- \frac 1p  \qquad \mbox{and} \qquad  q \le q_0 \, .
\]
We refer to \cite{SS} and \cite{HV}.
\\
{~}\\
\noindent
\textsc{ Proof of Theorem \ref{besov5}}. 
Assuming $S^{t_0}_{p_0,q_0}B (\R) \hookrightarrow B^{t}_{p,q} (\R) $
Lemma \ref{p0p1}  implies $p_0 \le p$.
Next we apply Example 4 to derive some relations between $p_0$ and $p$.
We choose $ a_{j} := \delta_{j,\ell}\ \,, j=1,...,\ell\, . 
$
Then it follows 
\[
 \| \, f_\ell\, | B^{t}_{p,q} (\R)\| = C \, 2^{\ell (t+1-1/p)}\
\qquad\text{and}\qquad
 \| \, f_\ell\, | S^{t_0}_{p_0,q_0} (\R)\| = C  2^{\ell (t+1-1/p_0)}\,.
\]
The assumed embedding implies 
\beqq
 t + 1 - \frac 1p \le t_0 + 1 - \frac{1}{p_0} \, .
\eeqq
In case $t - \frac 1p = t_0 - \frac{1}{p_0}$ we employ Example 4 again  with $a_j := 2^{-j(t+1-1/p)}$, $j=1 ,\ldots \, , \ell$.
As a consequence of the embedding we derive 
$q_0 \le q$.
This implies
\[
S^{t_0}_{p_0,q_0}B (\R)  \hookrightarrow S^{t}_{p,q}B (\R) \, , 
\]
see the above comments to embeddings of Besov spaces.\qed
\\
{~}\\
\noindent
{\textsc{Proof of Theorem \ref{besov2}}}. 
Assuming $B^{t_0}_{p_0,q_0} (\R) \hookrightarrow S^{t}_{p,q}B (\R) $
Lemma \ref{p0p1} implies $p_0 \le p$.
Next we employ Example 5 where the $a_j:= \delta_{j,\ell}$, $j=1, \ldots \, ,\ell$.
We obtain
\[
 \| \, f_\ell \, | S^{t}_{p,q}B (\R)\| = C \, 2^{\ell d (t + 1 - \frac 1p) }
\qquad \text{and }\qquad
 \| \, f_\ell \, | B^{t_0}_{p_0,q_0} (\R)\| = C \, 2^{\ell d ( \frac{t_0}{d} + 1 - \frac{1}{p_0}) }
\]
with $C>0$ independent of $\ell$.
The embedding $B^{t_0}_{p_0,q_0} (\R)\hookrightarrow  S^{t}_{p,q}B (\R) $ yields
\[
  d \Big( \frac{t_0}{d} + 1 - \frac{1}{p_0}\Big)  \geq  d \Big(t + 1 - \frac 1p\Big) \qquad \Longleftrightarrow \qquad t_0- \frac{d}{p_0}  \geq  d t  - \frac dp \, .
\]
Now, if $t_0- \frac{d}{p_0}  =  d t  - \frac dp$, we apply Example 5, again with $a_j := 2^{-j(t_0+1-1/{p_0})}$, to obtain $q_0\leq q$. All together we conclude
\[
B^{t_0}_{p_0,q_0} (\R)  \hookrightarrow B^{td}_{p,q}B (\R) \, , 
\]
see the above comments on embeddings.
\qed

{~}\\
\noindent
{\textsc{ Proof of Theorem \ref{besov3}}}. 
Assuming $B^{t d}_{p,q} (\R) \hookrightarrow S^{t_0}_{p_0,q_0}B (\R) $
Lemma \ref{p0p1} implies $p \le p_0$.
Example 5 with  $a_j:= \delta_{j,\ell}$, $j=1, \ldots \, ,\ell$, yields 
\[
 \| \, f_\ell \, | S^{t_0}_{p_0,q_0}B (\R)\| = C \, 2^{\ell d (t_0 + 1 - \frac {1}{p_0}) }
 \qquad\text{and}\qquad
 \| \, f_\ell \, | B^{td}_{p,q} (\R)\| = C \, 2^{\ell d (t + 1 - \frac{1}{p}) }
\]
with $C>0$ independent of $\ell$.
The embedding $B^{td}_{p,q} (\R) \hookrightarrow S^{t_0}_{p_0,q_0} B(\R) $ implies
\[
  d \Big( t_0 + 1 - \frac{1}{p_0}\Big)  \le  d \Big(t + 1 - \frac 1p\Big) \qquad \Longleftrightarrow \qquad t_0- \frac{1}{p_0}  \le   t  - \frac 1p \, .
\]
Working with Example 5 in the case $t_0- \frac{1}{p_0}  =   t  - \frac 1p$, choose  $a_j := 2^{-j(t+1-1/{p})}$, 
we obtain $q\leq q_0$. Taking into account the above comments on embeddings  we arrive at
$S^{t}_{p,q} B(\R) \hookrightarrow S^{t_0}_{p_0,q_0} B(\R)$. \qed

\bibliographystyle{amsalpha}

\begin{thebibliography}{99999}






\bibitem{Am}
{T.I.~Amanov},
{\it Spaces of differentiable functions with dominating mixed derivatives},
Nauka Kaz. SSR, Alma-Ata, 1976.




\bibitem{BS}
{C.~Bennett  and R.~Sharpley}, {\em Interpolation of Operators}, Academic Press, Boston, 1988.


\bibitem{BL}
{J.~Bergh and J.~L\"ofstr\"om}, {\em Interpolation Spaces, an Introduction}, Springer, New York, 1976.



\bibitem{Ca64}
{A.P.~Calder\'on},  {\em Intermediate spaces and interpolation, the complex method},
{Studia Math.} {\bf 24} (1964), 113-190.


 

\bibitem{DJP}
{R.A.~DeVore, B.~Jawerth and V.~Popov}, {\em Compression of wavelet decompositions},
{Amer. J. Math.} {\bf 114} (1992), 737-785. 



\bibitem{fj90}  
{M.~Frazier and B.~Jawerth}, 
{\em A discrete transform and decompositions of distribution spaces}, {J. Funct. Anal.} {\bf 93} (1990), 34-170.


\bibitem{Hansen}
{M.~Hansen}, {\em Nonlinear approximation and function spaces of dominating mixed smoothness}, Phd thesis, 
Friedrich-Schiller-University Jena, Jena, 2010.



\bibitem{HS1}
{M.~Hansen and W.~Sickel}, {\em Best $m$-term approximation and tensor products of Sobolev and Besov spaces --
the case of non-compact embeddings}, {East J. Approx.} {\bf 16} (2010), 313-356. 



\bibitem{HS2}
{M.~Hansen and W.~Sickel}, {\em Best $m$-term approximation and tensor products of Sobolev and Besov spaces --
the case of compact embeddings}, {Constr. Approx.} {\bf 36} (2012), 1-51.



\bibitem{HV}
{M.~Hansen and J.~Vybiral}, {\em The Jawerth-Franke embedding of spaces with dominating mixed smoothness}, {Georgian Math. J.} {\bf 16} (2009), no. 4, 667-682. 



\bibitem{JM}
{B.~Jawerth, M.~Milman}, {\em Weakly rearrangement invariant spaces and approximation by largest elements}, Contemporary Math. {\bf 445} (2007), 103-110. 


\bibitem{JM2}
{B.~Jawerth, M.~Milman}, {\em Wavelets and best approximation in Besov spaces}, {Isr. Math. Conf. Proc.} {\bf 5} (1992), 107-112.

\bibitem{kmm}
{N.~Kalton, S.~Mayboroda and M.~Mitrea}, {\em Interpolation of Hardy-Sobolev-Besov-Triebel-Lizorkin Spaces and Applications to Problems in Partial Differential Equations}, 
{Contemp. Math.} {\bf 445}, Amer. Math. Soc., 2007. 



\bibitem{Liz}
{P.~I.~Lizorkin}, {\em Multipliers of Fourier integrals and bounds of convolution in
spaces with mixed norms. Applications}, Izv. Akad. Nauk SSSR Ser. Mat. Tom {\bf 34}
(1970), No. 1, 225-256.


\bibitem{Li}
{P.~I.~Lizorkin}, {\em On the theory of Fourier multipliers}, 
{Trudy Mat. Inst. Steklova}
{\bf 173} (1986), 149-163.


\bibitem{lun}
{A.~Lunardi}, {\it Interpolation theory}. 
Lect. Notes, Scuola Normale Superiore Pisa, 2009.


\bibitem{MM}
{O.~Mendez and M.~Mitrea}, {\em The Banach envelopes of Besov and Triebel--Lizorkin spaces and applications to partial differential equations}, 
JFAA {\bf 6}(5) (2000), 503-531.

\bibitem{Ni}
{S.~M.~Nikol'skij}, 
{\it Approximation of Functions of Several Variables and Imbedding
Theorems}, Springer, Berlin, 1975.


\bibitem{NoWo08}
{E.~Novak and H.~Wo{\'z}niakowski}, {\em Tractability  of multivariate problems. Volume I: Linear information},
EMS, Z\"urich, 2008.




\bibitem{NoWo10}
{E.~Novak and H.~Wo{\'z}niakowski}, {\em Tractability  of multivariate problems. Volume II: Standard information
for functionals},
EMS, Z\"urich, 2010.

\bibitem{NoWo12}
{E.~Novak and H.~Wo{\'z}niakowski}, {\em Tractability  of multivariate problems. Volume III: Standard information for operators},
EMS, Z\"urich, 2012.


\bibitem{Pe}
{J.~Peetre},
{\it New thoughts on Besov spaces.} Duke Univ. Press, Durham, 1976.



\bibitem{Sc2}
{H.-J.~Schmeisser}, {\em Recent developments in the theory of function spaces with dominating mixed smoothness}, in
Proc. Conf. NAFSA-8, Prague 2006, (ed. J.~Rakosnik),
Inst. of Math. Acad. Sci., Czech Republic, Prague, 2007, pp.~145-204.



\bibitem{SS}
{H.-J.~Schmeisser and W. Sickel}, {\em Spaces of functions of mixed  smoothness and their relations to approximation from hyperbolic crosses}, J. Approx. Theory {\bf 128} (2004), 115-150.


\bibitem{ST} 
{H.-J.~Schmeisser and H.~Triebel}, {\it Topics in Fourier
analysis and function spaces},
 Geest \& Portig, Leipzig, 1987 and Wiley, Chichester, 1987.


  
\bibitem{sitr} 
 {W.~Sickel and H.~Triebel}, {\em H\"older inequalities and sharp embeddings
 in function spaces of $B^s_{p,q}$ and $F^s_{p,q}$ type}, {Z. Anal. Anwendungen} {\bf 14} (1995), 105-140.
 
  
 
 
\bibitem{SU09}
{W.~Sickel and T.~Ullrich}, {\em Tensor products of {S}obolev-{B}esov spaces and applications to
  approximation from the hyperbolic cross}, {J. Approx. Theory} {\bf 161} (2009), 748--786.

\bibitem{SU10}
{W.~{S}ickel and T.~{U}llrich},
{\em Spline interpolation on sparse grids},
{Applicable Analysis} {\bf 90} (2011), 337--383.

 


\bibitem{ta1} 
{M.~H.~Taibleson}, {\em On the theory of Lipschitz spaces
of distributions on Euclidean $n$-space. I. Principal properties}, {J. Math. Mech.} {\bf 13} (1964), 407-479.

\bibitem{ta2} 
{M.~H.~Taibleson},  {\em On the theory of Lipschitz spaces
of distributions on Euclidean $n$-space. II. Translation invariant
operators, duality, and interpolation}, {J. Math. Mech.} {\bf 14} (1965),
821-839.

\bibitem{ta3} 
{M.~H.~Taibleson}, {\em On the theory of Lipschitz spaces
of distributions on Euclidean $n$-space. III. Smoothness
and integrability of Fourier transforms, smoothness of
convolution kernels}, {J. Math. Mech.} {\bf 15} (1966), 973-981. 
 
 
 
\bibitem{T93}
{V.~N.~Temlyakov}, {\em Approximation of periodic functions}, Nova Science, New York, 1993.


\bibitem{Tr78} 
{H.~Triebel}, {\it Besov-Sobolev-Hardy spaces}, Teubner-Texte zur Math., Teubner, Leipzig, 1978.

 
 
\bibitem{t78} 
{H.~Triebel}, {\it Interpolation Theory, Function Spaces, Differential Operators},
North-Holland Publishing Co., Amsterdam-New York, 1978.


\bibitem{Tr83}
{H.~Triebel}, {\it Theory of function spaces}, Birkh\"auser,
Basel, 1983.

\bibitem{Tr92}
{H.~Triebel}, {\it Theory of function spaces II}, Birkh\"auser,
Basel, 1992.

\bibitem{U1}
{T.~Ullrich}, {\em Function spaces with dominating mixed smoothness. Characterizations by differences},
{Jenaer Schriften zur Mathematik und Informatik} Math/Inf/05/06,
Jena, 2006.


\bibitem{Vybiral}
{J.~Vybiral}, {\em Function spaces with dominating mixed smoothness}, {Dissertationes Math.} {\bf 436} (2006).

\bibitem{ysy}
{W.~Yuan, W.~Sickel and D.~Yang}, {\em Interpolation of Morrey-Campanato and related smoothness spaces}, Science China Math. {\bf 58}(9) (2015), 1835-1908.





\end{thebibliography}

\end{document}